\definecolor{qqqqff}{rgb}{0.,0.,1.}
\definecolor{cqcqcq}{rgb}{0.7529411764705882,0.7529411764705882,0.7529411764705882}
\definecolor{ttqqqq}{rgb}{0.2,0.,0.}
\definecolor{qqqqff}{rgb}{0.,0.,1.}
\definecolor{xdxdff}{rgb}{0.49019607843137253,0.49019607843137253,1.}
\definecolor{zzttqq}{rgb}{0.6,0.2,0.}
\definecolor{cqcqcq}{rgb}{0.7529411764705882,0.7529411764705882,0.7529411764705882}
\definecolor{yqyqyq}{rgb}{0.5019607843137255,0.5019607843137255,0.5019607843137255}
\definecolor{uuuuuu}{rgb}{0.26666666666666666,0.26666666666666666,0.26666666666666666}
\definecolor{xdxdff}{rgb}{0.49019607843137253,0.49019607843137253,1.}
\definecolor{qqqqff}{rgb}{0.,0.,1.}
 \font\ncsc=cmcsc10
\newcommand{\PP}{\mathbb{P}}
\newcommand{\ZZ}{\mathbb{Z}}
\newcommand{\RR}{\mathbb{R}}
\newcommand{\CC}{\mathbb{C}}
\newcommand{\EE}{\mathbb{E}}
\newcommand{\QQ}{\mathbb{Q}}
\newcommand{\HH}{\mathbb{H}}
\newcommand{\CCC}{\mathscr{C}}
\newcommand{\FFF}{\mathscr{F}}
\newcommand{\PPP}{\mathscr{P}}
\newcommand{\SSS}{\mathscr{S}}
\newcommand{\JJJ}{\mathscr{J}}
\newcommand{\tFFF}{\widetilde{\mathscr{F}}}
\newcommand{\mfk}{\mathfrak{m}}
\newcommand{\efk}{\mathfrak{e}}
\newcommand{\Lfk}{\mathfrak{L}}
\newcommand{\V}{\mathcal{V}}
\newcommand{\M}{\mathcal{M}}
\newcommand{\R}{\mathcal{R}}
\newcommand{\J}{\mathcal{J}}
\renewcommand{\J}{\mathcal{J}}
\newcommand{\sfe}{\mathsf{e}}
\newcommand{\sff}{\mathsf{f}}
\newcommand{\sfs}{\mathsf{s}}
\newcommand{\sfp}{\mathsf{p}}
\newcommand{\sfq}{\mathsf{q}}
\newcommand{\sfA}{\mathsf{A}}
\newcommand{\sfC}{\mathsf{C}}
\newcommand{\sfH}{\mathsf{H}}
\newcommand{\sfT}{\mathsf{T}}
\newcommand{\sfR}{\mathsf{R}}
\newcommand{\bfh}{\mathbf{h}}
\newcommand{\bfw}{\mathbf{w}}
\newcommand{\bfmu}{\boldsymbol{\mu}}
\newcommand{\gen}[1]{\langle #1 \rangle}
\newcommand{\dd}{ \mathrm{d} }
\newcommand{\vir}[1]{[#1]^\mathrm{vir}}
\newcommand{\ev}{\mathrm{ev}}
\newcommand{\pt}{\mathrm{pt}}
\newcommand{\modulo}{\ \mathrm{mod}\ }
\renewcommand{\Im}{\mathrm{Im}\ }
\newcommand{\abcd}{\left(\begin{smallmatrix} a & b \\ c & d \\ \end{smallmatrix}\right)}
\newcommand{\tabcd}{\frac{a\tau+b}{c\tau+d}}
\newcommand{\bino}[2]{\begin{pmatrix}
#1 \\
#2 \\
\end{pmatrix}}
\newcommand{\sbino}[2]{\left(\begin{smallmatrix}
#1 \\
#2 \\
\end{smallmatrix}\right)}
\def\pearl (#1) at (#2:#3) label=#4; {
    \node[draw,rectangle, minimum width=0.5cm, minimum height = 0.5 cm] (#1) at (#2:#3) {$#4$} ;
}
\def\flatpearl (#1) at (#2:#3) label=#4; {
    \node[draw,circle, minimum width=0.5cm, minimum height = 0.5 cm] (#1) at (#2:#3) {$#4$} ;
}
\def\dottedpearl (#1) at (#2:#3) label=#4; {
    \node[draw,dotted,circle, minimum width=0.5cm, minimum height = 0.5 cm] (#1) at (#2:#3) {$#4$} ;
}
\newtheorem{theo}{Theorem}[section]
\newtheorem*{theom}{Theorem}
\newtheorem{prop}[theo]{Proposition}
\newtheorem{coro}[theo]{Corollary}
\newtheorem{lem}[theo]{Lemma}
\theoremstyle{definition}
\newtheorem{defi}[theo]{Definition}
\theoremstyle{remark}
\newtheorem{remark}[theo]{Remark}
\newenvironment{rem}[1]{
    \begin{remark}#1}{
    \xqed{\blacklozenge}\end{remark}
}
\theoremstyle{remark}
\newtheorem{example}[theo]{Example}
\newenvironment{expl}[1]{
    \begin{example}#1}{
    \xqed{\lozenge}\end{example}
}
\newcommand{\xqed}[1]{
    \leavevmode\unskip\penalty9999 \hbox{}\nobreak\hfill
    \quad\hbox{\ensuremath{#1}}}
\keywords{Enumerative geometry, Bielliptic surfaces, Gromov-Witten invariants, refined invariants, relative invariants, floor diagrams\\
Thomas Blomme,
{\ncsc Universit\'e de Gen\`eve, 5-7 rue du Conseil G\'en\'eral,
1205 Gen\`eve,
Switzerland} \\
\textit{Email :} thomas.blomme@unige.ch\\}
\begin{document}
 
 
\title{Gromov-Witten Invariants of Bielliptic Surfaces}
\author{Thomas Blomme}

\begin{abstract}
Bielliptic surfaces appear as quotient of a product of two elliptic curves and were classified by Bagnera-Franchis. We give a concrete way of computing their GW-invariants with point insertions using a floor diagram algorithm. Using the latter, we are able to prove the quasi-modularity of their generating series by relating them to generating series of graphs for which we also prove quasi-modularity results. We propose a refinement of these invariants by inserting a $\lambda$-class in the considered GW-invariants.
\end{abstract}

\maketitle

\tableofcontents

\section{Introduction}

	\subsection{Setting}
	
		\subsubsection{Bielliptic surfaces}, or sometimes called \textit{hyperelliptic surfaces}, form a class of complex surfaces with Kodaira dimension $0$ in the classification of compact complex surfaces \cite{beauville1978surfaces}. The latter is comprised of four kinds of surfaces. If the surface has trivial canonical bundle, it is either a K3 surface or an abelian surface (complex torus with some polarization). If the canonical bundle is not trivial (but still numerically $0$), the surface is either an Enriques surface, \textit{i.e.} the quotient of a K3 surface by an involution, or a bielliptic surface, which are actually quotients of particular abelian surfaces.
		
		\smallskip
		
		The classification of bielliptic surfaces was established by Bagnera-Franchis \cite{bagnera1908superficie} and Enriques-Severi \cite{enriques1910memoire} at the beginning of the XXth century. Bielliptic surfaces appear as quotient of very specific abelian surfaces, actually product of elliptic curves $E\times F$, by the action of a finite abelian group acting by translations on $F$, and by some non-trivial automorphisms on $E$. If $E$ is chosen generically, up to translation, the only automorphism is the symmetry $z\mapsto -z$, leading to a first kind of bielliptic surfaces, called of type $(a)$. To get different families, one has to assume that $E$ is either $\CC/\gen{1,i}$ or $\CC/\gen{1,\rho}$, where $i^4=\rho^3=1$, since these elliptic curves have more complicated automorphisms of respective order $3$, $4$ and $6$, leading to types $(b)$, $(c)$ and $(d)$. Accounting for these various possibilities, and the splitting of types $(a)$, $(b)$ and $(c)$ in two by adding a translation to the group action, there are actually seven types of bielliptic surfaces. We refer to Section \ref{sec-classification-biell-surf} for the detailed classification.
		
		\smallskip
		
		The homology groups of the bielliptic surfaces have been computed by Serrano \cite{serrano1989divisors,serrano1991picard}, and have rank $1,2,2,2,1$. Though their Betti numbers are the same, the seven different types of bielliptic surfaces are actually distinguished by their topology. Furthermore, $H_1(S,\ZZ)$ and $H_2(S,\ZZ)$ have a torsion part which varies from one type of surface to another. In this paper, we only care about the homology modulo torsion, so that all the bielliptic surfaces may be treated similarly. The group $H_2(S,\ZZ)$ modulo torsion is generated by classes $\sfe$ and $\sff$, which are proportional to push-forwards of the fibers $E$ and $F$ to the quotient.

		\subsubsection{Enumerative problems and GW-invariants.} To understand the geometry of a surface $S$, it is natural to study the curves that it contains. This can be done by studying stable maps $f:C\to S$, where $C$ is a nodal curve, realizing a fixed homology class $\varpi\in H_2(S,\ZZ)$ (modulo torsion). For a general surface, these curves are expected to be deformed in a space of dimension $-K_S\cdot\varpi+g-1$, so that imposing the right number of geometric constraints, we get a finite number of curves. This is unfortunately not true in general, since the moduli space $\M_g(S,\varpi)$ of genus $g$ stable maps to $S$ realizing the class $\varpi$ contains actually many components of various dimensions. This is especially not true for bielliptic surfaces, since even if we consider solely the immersed curves with a smooth domain of a given genus $g$, some components are of dimension $g-1$, which is the expected dimension, but some components are of dimension $g$. The latter consist in fact in the curves coming from the abelian cover, and are known to vary in dimension $g$.
		
		\smallskip
		
		To remedy this problem, the moduli space of genus $g$ stable maps with $n$ marked points $\M_{g,n}(S,\varpi)$ is endowed with a virtual fundamental class $\vir{\M_{g,n}(S,\varpi)}$, which is of the expected dimension \cite{behrend1997intrinsic}. The moduli space is also endowed with an evaluation map
		$$\ev:\M_{g,n}(S,\varpi)\longrightarrow S^n.$$
		We then obtain numerical invariants by capping pull-backs of cohomology class in $H^*(X,\QQ)$ on this virtual fundamental class. These invariants are called \textit{enumerative} Gromov-Witten invariants. We are mainly interested in the case where $S$ is a bielliptic surface and the classes are pull-backs of the cohomology class Poincar\'e dual to a point $\pt\in H^4(S,\QQ)$.
		
		\smallskip
		
		As the Picard group of a bielliptic surface is of dimension $1$, it is possible to look at curves belonging to a given linear system rather than just fixing the homology class. It is shown in \cite{bryan1999generating}, see also \cite[Section 4]{kool2011reduced}, that imposing this kind of constraints amounts to insert cohomology classes Poincar\'e dual to a basis of $H_1(S,\ZZ)$ modulo torsion. Thus, we also care about the GW-invariants where we insert two odd dimensional cycles.

		\subsubsection{Refinement of the enumerative invariants.} The moduli space stable maps is also endowed with a forgetful map to the moduli space of stable curves $\overline{\M}_{g,n}$. So it is also possible to pull-back cohomology classes from the latter. For various reasons, we are interested in the insertion of so-called $\lambda$-classes, which are the Chern classes of the Hodge bundle. The latter is the bundle where the fiber over a curve $C$ is the space of holomorphic forms $H^0(C,\omega_C)$, which is of dimension $g$. The choice of these particular cohomology classes is important for at least two different reasons:
		\begin{itemize}[label=$\ast$]
		\item The GW-invariants with a $\lambda$-class insertion are related to the study of \textit{tropical refined invariants}. F. Block and L. G\"ottsche \cite{block2016fock,block2016refined} propose to replace the tropical multiplicity involved in the tropical correspondence theorem proven by G. Mikhalkin \cite{mikhalkin2005enumerative} by a polynomial one, yielding polynomial counts of tropical curves. It was then proven by I. Itenberg and G. Mikhalkin \cite{itenberg2013block} that counts of tropical curves in $\RR^2$ of fixed genus and degree passing through a given number of points using this new refined multiplicity does not depend on the choice of the points as long as it is generic, leading to \textit{tropical refined invariants}. Due to their mysterial nature, tropical refined invariants have been at the center of multiple works since their discovery. It was proven by P. Bousseau that through a change of variable, tropical refined invariants actually compute the generating series of some GW-invariants with a $\lambda$-class insertion \cite{bousseau2019tropical,bousseau2021floor}. The advantage of this approach is that it allows one to define refined invariants even if we lack a tropical picture to the situation, by considering these $\lambda$-classes insertions. This is the case of some bielliptic surfaces.
		
		\item One other reason to care about $\lambda$-classes is that they appear when relating GW-invariants of surfaces and some specific threefolds. They are thus necessary when trying to compute GW-invariants of some Calabi-Yau threefolds. See for instance \cite{oberdieck2023enriques}.
		\end{itemize}

		\subsubsection{Degeneration and enumeration techniques.} The computation of GW-invariants remains in many cases a challenge. Yet, there exists some tools to tackle some computations. One of the main tools is Li's degeneration formula \cite{li2002degeneration} (also proven in \cite{kim2018degeneration}). The formula relies on the fact that GW-invariants are invariant in families: when considering a families of complex surfaces, fibers have the same GW-invariants. The idea is then to consider a family where one of the surfaces is reducible (with suitable notions of GW-invariants), consisting of several irreducible components each meeting all the other on a smooth divisor. It is then possible to express the GW-invariants of the reducible fiber (and thus any fiber) in terms of the GW-invariants of its irreducible components \textit{relative} to this intersection divisor. Here, \textit{relative} means that we deal with curves having a prescribed intersection profile with the divisor. Concretely, the formula states that when going to the reducible fiber, the curves in a generic fiber of the family break into the various components of the reducible fibers, and that it is possible to reconstruct everything from the various pieces, at the price of handling the combinatorics of their gluing.
		
		\smallskip
		
		The combinatorics involved in the decomposition formula \cite{li2002degeneration} are decorated bipartite graphs, where the colors correspond in fact to the two components of the reducible fibers. However, one application of the degeneration formula may not be enough to compute the GW-invariants since the relative GW-invariants of the components may still be complicated. It is possible to degenerate further, breaking into smaller pieces, but increasing the difficulty of the combinatorial part of the computation. These iterated applications of the decomposition formula lead to the introduction of more complicated combinatorial objects called \textit{floor diagrams}. The latter were introduced by E. Brugall\'e and G. Mikhalkin \cite{brugalle2007enumeration,brugalle2008floor} using the tropical geometry approach. They can also be seen as iterated version of the degeneration formula, as done for instance by Y. Cooper and R. Pandharipande \cite{cooper2017fock}. In our situation, the combinatorics of curves in bielliptic surfaces are encoded by \textit{pearl diagrams}, which are cyclic versions of the floor diagrams.

	\subsection{Results}
	
		\subsubsection{Pearl diagram algorithm.} We now explain how to compute the GW-invariants of bielliptic surfaces. For the latter, as the canonical class is torsion, the virtual dimension of the moduli space is $g-1$, so that we get a finite number by capping with $g-1$ point classes. For a bielliptic surface, homology class $\varpi\in H_2(S,\ZZ)$, the GW-invariant is denoted by
		$$\gen{\pt^{g-1}}_{g,\varpi}^S.$$
		To compute the GW-invariants of bielliptic surfaces, we apply the degeneration formula \cite{li2002degeneration} to some specific degenerations constructed in Section \ref{sec-construction-degeneration}. The computation is reduced down to the enumeration of some decorated graphs called \textit{pearl diagrams}, with an explicit multiplicity. We refer to Section \ref{sec-different-kinds-of-diagrams} for precise definitions.
		
		\smallskip
		
		A pearl diagram $\PPP$ of genus $g$ is an oriented graph with two kinds of vertices:  \textit{flat} vertices which are bivalent, and \textit{non-flat} vertices which are of genus $1$. The vertices are labeled by $[\![1;g-1]\!]$. The genus of the graph (including the non-flat vertices) is equal to $g$. It has the following decorations:
		\begin{itemize}[label=$\ast$]
		\item A non-flat vertex $V$ is endowed with a positive integer $a_V\geqslant 1$.
		\item Each edge $e$ is decorated with a \textit{weight} $w_e\geqslant 1$ such that the oriented graph is balanced: the sum of incoming weights matches the sum of outgoing weights.
		\item Each edge $e$ with extremities labeled $i$ and $j$ has an associated \textit{height} denoted by $h_e$, which is $\geqslant 0$ if $i<j$, and $\geqslant 1$ if $i\geqslant j$.
		\end{itemize}
		We furthermore assume that the complement of flat vertices is a union of components each having a unique cycle $\gamma_k$. The degree of the pearl diagram is $a\sfe+b\sff\in H_2(S,\ZZ)$, where $a=\sum_V a_V$, and $b=\sum w_eh_e$. Examples of pearl diagrams can be found on Figure \ref{fig-expl-pearl-diagram-1}, \ref{fig-expl-pearl-diagram-2} and \ref{fig-expl-pearl-diagram-3}.
		
		\smallskip
		
		Each pearl diagram is associated the following multiplicity:
		$$m(\PPP)=\prod_k\tau_n(h_{\gamma_k})\prod_V a_V^{n_V-1}\sigma_1(a_V)\prod_{E_\mathrm{flat}}w_e\prod_{E\backslash E_\mathrm{flat}}w_e^3,$$
		where the first product is to be explained, the second over the non-flat vertices, the third over edges adjacent to a flat vertex, and the last over the edges not adjacent to any flat vertex. The first product is over the connected components of the complement of marked points. By assumption, each contains a unique cycle $\gamma_k$, to which is associated a number $h_{\gamma_k}$ by summing the heights of its edges (with sign for the orientation). The function $\tau_n(h)$ is $n$-periodic and its values are as follows:
		\begin{itemize}[label=$\ast$]
		\item $n=2$: $\tau_2(0)=0$, $\tau_2(1)=4$,
		\item $n=3$: $\tau_3(0)=0$, $\tau_3(1)=\tau_3(2)=3$,
		\item $n=4$: $\tau_4(0)=0$, $\tau_4(1)=\tau_4(3)=2$, $\tau_4(2)=4$,
		\item $n=6$: $\tau_6(0)=0$, $\tau_6(1)=\tau_6(5)=1$, $\tau_6(2)=\tau_6(4)=3$, $\tau_6(3)=4$.
		\end{itemize}
		
		\begin{theom}(Proposition \ref{prop-decomposition} and \ref{prop-computation-multiplicity})
		The count of pearl diagrams with multiplicity $m(\PPP)$ yields the GW-invariant $\gen{\pt^{g-1}}_{g,\varpi}^S$.
		\end{theom}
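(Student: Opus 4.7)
The plan is to apply Li's degeneration formula iteratively to a carefully chosen degeneration of the bielliptic surface $S$, and to identify the resulting combinatorial sum with the count of pearl diagrams weighted by $m(\PPP)$. I would start from the degeneration constructed in Section \ref{sec-construction-degeneration}: degenerating the base elliptic curve $F$ to a cycle of rational curves produces a degeneration of $S$ whose central fiber is a chain of simpler surfaces (essentially $\PP^1$-bundles over $E$, together with one ``special'' piece coming from the quotient), glued along disjoint copies of $E$. The degeneration formula then expresses $\gen{\pt^{g-1}}_{g,\varpi}^S$ as a weighted sum over bipartite graphs encoding the distribution of the $g-1$ point insertions and the tangency profile across the intersection divisors. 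Iterating the procedure, each irreducible piece of the iterated central fiber carries at most one point constraint, which is what makes the combinatorics collapse down to a pearl diagram rather than a generic decorated graph.

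The next step is to match the combinatorics of these graphs with pearl diagrams. The bivalent flat vertices correspond to $\PP^1$-bundle pieces carrying no point constraint, and their bivalence forces the balancing condition on weights. The non-flat vertices correspond to the ``special'' piece, which carries a single point insertion, is of genus $1$ (whence the decoration), and hosts the entire contribution in the $\sfe$ direction through the integer $a_V$. The heights $h_e\in\ZZ$ record, for each edge, which sheet of the cyclic cover associated to the group quotient the corresponding local branch lies on, with the inequality $h_e\geqslant 1$ when the edge is oriented ``backwards'' (from $j$ to $i$ with $i\geqslant j$) coming from the sign convention on the labelling. The class condition $b=\sum w_eh_e$ in $\sff$ then follows directly from the local computation of the degree on each piece.

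The third step is to evaluate the local contributions. At a non-flat vertex, the relative GW-invariant of the elliptic piece with $n_V$ tangency conditions contributes $a_V^{n_V-1}\sigma_1(a_V)$: the factor $\sigma_1(a_V)$ is the standard Hurwitz-type count of degree $a_V$ covers of $E$, while $a_V^{n_V-1}$ accounts for the freedom in matching the $n_V$ tangency points under the balancing condition. At an edge, the relative invariants of the $\PP^1$-bundle pieces contribute a weight $w_e$ per branch together with Jun Li's gluing factor $w_e$ per tangency to be glued; a flat bivalent vertex absorbs one such factor, explaining the dichotomy between $w_e^3$ for edges between non-flat vertices and $w_e$ for edges incident to a flat vertex. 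Finally, the cycle factor $\tau_n(h_{\gamma_k})$ records the number of lifts of a closed cycle of the pearl diagram to the $n$-fold cyclic cover of $S$ that give rise to an actual stable map to $S$, where $n\in\{2,3,4,6\}$ is the order of the automorphism of $E$ defining the type; such a cycle lifts if and only if the total holonomy $h_{\gamma_k}$ modulo $n$ is compatible with the group action, and the listed values of $\tau_n$ are exactly the cardinalities of the admissible sets of fixed sheets.

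The main obstacle will be the correct computation of the $\tau_n$ factors. Isolating the contribution of the cycles requires tracking the global topology of the stable map through the successive degenerations and carefully distinguishing which configurations descend from a map to the cover $E\times F$ and which descend only to $S$. Concretely, this amounts to a monodromy computation for the finite group action on the cover, done independently for each of the seven types of bielliptic surfaces of the Bagnera--Franchis list, and the accumulated torsion contributions must be shown to assemble into the stated periodic function. The factors $\sigma_1(a_V)$, $a_V^{n_V-1}$ and the edge weights, while classical in spirit, also need to be re-established in the relative setting for each of the seven types, but I expect this to be a direct adaptation of the corresponding computations on $E\times F$ combined with invariance under the finite group action.
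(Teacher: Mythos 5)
Your overall strategy (degenerate over a necklace of rational curves, apply the degeneration formula iteratively, match the combinatorics with pearl diagrams, and feed in the relative invariants of the $\PP^1$-bundle pieces) is the same as the paper's, but two of your structural claims are off in a way that would derail the proof. First, there is no ``special piece coming from the quotient'' in the central fiber: in the degenerations of Section \ref{sec-construction-degeneration} every irreducible component is the \emph{same} $\PP^1$-bundle $X$ over $\widetilde{E}$, and the group action is encoded entirely in the \emph{monodromy} of the gluing around the necklace (the identification of the last divisor with the first twists $H^1(\widetilde{E},\ZZ)$ by the matrix $M$ of the automorphism). Consequently non-flat vertices are not ``the special piece'': they are simply the components of the stable map with $a_V\neq 0$, they can occur in any $X_i$, each one carries exactly one point insertion, and the $\sfe$-degree is spread over all of them as $a=\sum_V a_V$. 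Your picture would not produce the correct family of diagrams, nor explain why each label carries at most one non-flat vertex (which the paper derives from the $\CC^*$-action forcing a point insertion on every non-flat component).

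Second, the $\tau_n$ factor, which you correctly identify as the crux, is not obtained by ``counting lifts to the cyclic cover'' or ``admissible fixed sheets.'' In the paper it falls out of the K\"unneth decomposition of the diagonal classes $\delta=1\otimes\pt+\pt\otimes 1-\alpha\otimes\beta+\beta\otimes\alpha$ inserted at the gluing divisors: the choice of one summand at a single flag of a cycle propagates deterministically around the whole cycle (by the vanishing results of Lemma \ref{lem-non-zero-enum-inv}), and the four choices contribute $1+1-d-a=2-\mathrm{tr}(M^{\nu})=\tau_n(\nu)$, where $\nu$ is the winding class of the cycle. Your heuristic is numerically consistent with this (it is a Lefschetz fixed-point count), but as stated it is not a proof, and it cannot be carried out ``independently for each of the seven types'' without the cohomological propagation argument. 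Relatedly, you omit the combinatorial lemma that makes the product $\prod_k\tau_n(h_{\gamma_k})$ well defined: one must show (Lemma \ref{lem-position-marked-flat-vertices-enum-case}) that each connected component of the complement of the flat vertices contains exactly one cycle, which follows from an Euler-characteristic count combined with a pruning argument showing that a tree component would force a point insertion at every flag of some vertex and hence vanish. Without that step neither the multiplicity formula nor the claimed collapse to pearl diagrams is justified.
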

		
		The results from \cite{bousseau2019tropical,bousseau2021floor} suggest that the enumerative invariants should be refined by considering the generating series with a $\lambda$-class insertion:
		$$\sum_{g\geqslant g_0}\gen{\lambda_{g-g_0};\pt^{g_0-1}}_{g,\varpi}^S u^{2g-2}.$$
		The computation of the latter is also enabled by the degeneration formula and the pearl diagrams, provided the latter are counted with a new adhoc \textit{refined} multiplicity given in Proposition \ref{prop-computation-multiplicity-lambda-case}.

		\subsubsection{Regularity of the generating series.} There already exists an extensive litterature on the generating series of GW-invariants, which often display nice regularity properties depending on the features of the considered surface. Most impressive results occur for the (reduced) GW-invariants of surfaces with trivial canonical bundle, where they are proved to be Fourier development of (quasi-)modular forms. For instance, in the case of K3 surface, we have the Yau-Zaslow conjecture \cite{beauville1999counting,bryan2000enumerative,klemm2010noether},  giving explicit expressions for the generating series. We also have explicit expressions in the case of abelian surfaces \cite{bryan1999generating,bryan2018curve,blomme2022abelian3}. Getting away from the surfaces with trivial canonical bundle, we also find quasi-modularity properties for surfaces associated to elliptic curves. For instance the generating series of their covers (see for instance \cite{goujard2019counting,boehm2018tropical}), or GW-invariants of $\PP^1$-bundles over an elliptic curve \cite{blomme2021floor}.
		
		\smallskip
		
		Quasi-modular forms are some specific functions on the Poincar\'e half-plane satisfying a transformation law under the action of $SL_2(\ZZ)$ by homography. Quasi-modularity is a desirable property because it enables a strong control over the coefficients of the series, such as a polynomial growth. Moreover, the finiteness of the dimension of quasi-modular forms of a given weight allows one to recover all the coefficients only knowing a finite number of them.
		
		\smallskip
		
		When the canonical class is non-trivial but still numerically zero, we may lose the quasi-modularity for $SL_2(\ZZ)$, but generating series may remain quasi-modular for smaller subgroups of $SL_2(\ZZ)$ called \textit{congruence subgroups} $\Gamma_0(n)$ or $\Gamma_1(n)$. The latter consist of matrices which are upper triangular ($\Gamma_0(n)$) or unipotent ($\Gamma_1(n)$) modulo some fixed number $n$. For instance, in the case of Enriques surfaces, G. Oberdieck proved that some generating series are quasi-modular for the congruence subgroup $\Gamma_0(2)$.
		
		\smallskip
		
		Our main regularity result is Theorem \ref{theo-quasi-modularity-GW-invariants}, concerning the double generating series
		$$F_g^S(\sfp,\sfq)=\sum_{a,b\geqslant 1}\gen{\pt^{g-1}}_{g,a\sfe+b\sff}^S\sfp^a\sfq^b.$$
		
		\begin{theom}\ref{theo-quasi-modularity-GW-invariants}
		The generating series $F_g^S$ is a finite sum of products $A(\sfp)B(\sfq)$, where $A$ is quasi-modular for $SL_2(\ZZ)$, and $B$ is quasi-modular for the congruence subgroup $\Gamma_1(n)$, $n=2,3,4$ or $6$ depending on the bielliptic surface.
		\end{theom}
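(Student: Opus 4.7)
The plan is to apply the pearl diagram algorithm of Propositions \ref{prop-decomposition} and \ref{prop-computation-multiplicity} and exploit the fact that the multiplicity $m(\PPP)$ and the bidegree $(a,b)$ split cleanly into a \emph{vertex part} and an \emph{edge part}. First I would write
$$F_g^S(\sfp,\sfq) = \sum_{\Gamma} \sum_{\text{decorations of }\Gamma} m(\PPP)\,\sfp^{a(\PPP)}\sfq^{b(\PPP)},$$
where the outer sum ranges over the finite set of underlying labelled combinatorial types $\Gamma$ (vertices with flat/non-flat decoration and labels in $[\![1;g-1]\!]$, oriented edges and cycle structure) compatible with genus $g$. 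Since $a = \sum_V a_V$ depends only on the non-flat vertex decorations while $b = \sum_e w_e h_e$ depends only on the edge decorations, and
$$m(\PPP) = \Bigl(\prod_V a_V^{n_V-1}\sigma_1(a_V)\Bigr)\cdot\Bigl(\prod_{E_\mathrm{flat}} w_e \prod_{E\setminus E_\mathrm{flat}} w_e^3 \cdot \prod_k \tau_n(h_{\gamma_k})\Bigr),$$
each $\Gamma$-summand factors as a product $A_\Gamma(\sfp)\,B_\Gamma(\sfq)$. Finite summation over $\Gamma$ then yields the claimed finite decomposition, provided every $A_\Gamma$ is quasi-modular for $SL_2(\ZZ)$ and every $B_\Gamma$ for $\Gamma_1(n)$.

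For the vertex factor I would use the Lambert-series identity $\sum_{a\geqslant 1}\sigma_1(a)\sfp^a = (1-E_2(\sfp))/24$ and the Ramanujan derivation $\sfp\partial_\sfp$ to write
$$A_\Gamma(\sfp) = \prod_V \sum_{a\geqslant 1} a^{n_V-1}\sigma_1(a)\,\sfp^a = \prod_V (\sfp\partial_\sfp)^{n_V-1}\Bigl(\tfrac{1-E_2(\sfp)}{24}\Bigr).$$
Since the algebra of quasi-modular forms for $SL_2(\ZZ)$ is stable under $\sfp\partial_\sfp$ and contains $E_2$, this factor is quasi-modular for $SL_2(\ZZ)$.

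For the edge factor $B_\Gamma(\sfq)$, I would parametrise the balanced weight assignments $\{w_e\}$ via a spanning tree and cycle basis, so that the non-tree weights become explicit linear combinations of the tree weights. Edges lying in no cycle $\gamma_k$ contribute elementary geometric series in $\sfq^{w_e}$, while the cyclic contribution is
$$\prod_k \sum_{\{h_e\}_{e\in \gamma_k}} \tau_n\Bigl(\sum_{e\in\gamma_k}\pm h_e\Bigr)\prod_{e\in\gamma_k}\sfq^{w_e h_e}.$$
Expanding the $n$-periodic function as a $\CC$-linear combination of Dirichlet characters $\tau_n(h) = \sum_\chi c_\chi\,\chi(h)$ decouples the cyclic height sums into products of $\chi$-twisted sums, which after summing over the remaining weights (weighted by $w_e$ or $w_e^3$) become Lambert-type series of the shape $\sum_m\bigl(\sum_{d\mid m} P(d)\chi(m/d)\bigr)\sfq^m$. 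These are derivatives of Eisenstein series $E_2^\chi$ for $\Gamma_1(n)$, and stability of the quasi-modular algebra for $\Gamma_1(n)$ under $\sfq\partial_\sfq$ finishes the argument.

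The main obstacle is precisely the cyclic contribution, because the balance condition couples the edge weights of a cyclic component while the $\tau_n$-insertion couples their heights. The character decomposition of $\tau_n$ on $\ZZ/n\ZZ$ is what linearises the sum; verifying that the tabulated values of $\tau_n$ for $n\in\{2,3,4,6\}$ do split non-trivially as such a character combination is a short computation which yields the explicit character data (and hence the level) entering the final quasi-modular identification.
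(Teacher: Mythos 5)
Your reduction to a finite sum over underlying graphs $\Gamma$, the factorization of $m(\PPP)$ into a vertex part and an edge part, and the identification of $A_\Gamma(\sfp)=\prod_V(\sfp\partial_\sfp)^{n_V-1}E_2(\sfp)$ as quasi-modular for $SL_2(\ZZ)$ all match the paper's argument. The decomposition of the $n$-periodic weight $\tau_n$ into characters (the paper uses indicator functions $\mathds{1}_{k+n\ZZ}$ and distributes the congruence on the cycle sum $\sum\eta_e\epsilon_e h_e$ over the individual edges, which is equivalent) is also the right move and is exactly how the paper reduces to graph sums with edgewise congruence constraints.

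The genuine gap is in your treatment of $B_\Gamma(\sfq)$. After linearizing $\tau_n$, you are left with a sum over \emph{all} balanced assignments of weights $w_e\geqslant 1$ and heights $h_e$ (with congruence constraints), weighted by $\prod_e w_e^{m_e+1}\sfq^{w_eh_e}$. The balancing condition couples the weights of \emph{every} edge of $\Gamma$ — including edges outside the distinguished cycles $\gamma_k$, since they still enter the divergence condition at their endpoints — so the sum does not factor into edge-by-edge Lambert series. Your spanning-tree/cycle-basis parametrization expresses the weights as linear forms in the cycle flows, but the positivity constraints $w_e\geqslant 1$ then describe lattice points of a nontrivial cone, and already for a theta graph (weights satisfying $w_1=w_2+w_3$) the resulting series $\sum_{w_2,w_3\geqslant 1}(w_2+w_3)^{m_1+1}w_2^{m_2+1}w_3^{m_3+1}(\cdots)$ is not a product of twisted Eisenstein series in any evident way. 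The quasi-modularity of these coupled graph sums is precisely the content of Theorem \ref{theo-quasi-modularity-graph-sums} (a congruence refinement of \cite[Theorem 6.1]{goujard2019counting}), and its proof is the technical core of the paper: the graph sum is realized as the constant Fourier coefficient of a multivariable quasi-Jacobi form built from shifted Weierstrass propagators $P_k(z;\tau)=P(z+k\tau;n\tau)$, and quasi-modularity for $\Gamma_1(n)$ is then extracted from Theorem \ref{theo-quasi-modularity-periods-multivariable} on periods of quasi-Jacobi forms via the residue/contour-integration scheme of \cite{oberdieck2018holomorphic}. Your proposal asserts the conclusion of this step ("these become Lambert-type series, hence derivatives of $E_2^\chi$") without an argument, and the assertion as stated is false for general $\Gamma$; some substitute for the propagator/period machinery is required.
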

		
		The proof uses the pearl diagram approach, reducing the computation to the study of some finite graphs. The quasi-modularity comes out of the following results:
		\begin{itemize}[label=$\ast$]
		\item Theorem \ref{theo-quasi-modularity-graph-sums} expresses the generating series associated to a given pearl diagram as the Fourier coefficient of a (quasi-)Jacobi form associated to the graph. This is a generalization from \cite[Theorem 6.1]{goujard2019counting} where we incorporate congruence constraints to the generating series.
		\item Theorem \ref{theo-quasi-modularity-periods-multivariable} which gives the quasi-modularity of the Fourier coefficients of (quasi-)Jacobi forms for congruence subgroups. This result follows the steps of the result for $SL_2(\ZZ)$ proven in \cite[Appendix]{oberdieck2018holomorphic}, which was itself a generalization of \cite[Theorem 5.8]{goujard2019counting}.
		\end{itemize}
		
		We are able to provide some explicit computations of generatings series. For simplicity, we give them for bielliptic surfaces of type $(a)$, and there are direct analogs for types $(b)$, $(c)$ and $(d)$:
		\begin{align*}
		\sum_{a,b\geqslant 1}\gen{\pt^{3}}_{3,a\sfe+b\sff}^S\sfp^a\sfq^b = & 4D^3E_2(\sfp)DE_2(\sfp)\left[E_4(\sfq)-E_4(\sfq^2)\right] + 4DE_2(\sfp)^2\left[DE_6(\sfq)-2DE_6(\sfq^2)\right], \\
		\sum_{a,b\geqslant 1}\gen{\lambda_{g-2};\pt}_{g,a\sfe+b\sff}^S\sfp^a\sfq^b = &   8\frac{(-1)^g}{(2g-2)!}DE_{2g-2}(\sfp)(E_{2g}(\sfq)-E_{2g}(\sfq^2) ), \\
		\end{align*}
		with $E_{2k}$ the Eisenstein series.

	\subsection{Possible directions}
		
		\subsubsection{More GW-invariants.} After dealing with the GW-invariants having point insertions and a $\lambda$-class, one is often interested in inserting other natural cohomology classes, such as the $\psi$-classes, getting so-called \textit{descendant invariants}. The question is to see if the regularity results extend as well as computation techniques. The decomposition formula still applies, so this amounts to compute the corresponding relative invariants of $E\times\PP^1$.
		
		\smallskip
		
		Another direction in which to extend the family of GW-invariants is to look at the torsion: our computation of the GW-invariants does not take care of the torsion part in the homology group of the considered bielliptic surface. It may be interesting to see if the pearl diagram approach enables to recover the torsion part of the degree.
		
		\subsubsection{More precise computation and regularity.} In the case of Enriques surfaces, using the quasi-modularity of the generating series, G. Oberdieck \cite{oberdieck2023enriques} is able to show that up to a change of variable, the GW-invariants of Enriques surfaces only depend on the square of the homology class $\varpi^2$. This may not be expected since we do not necessarily deformations and automorphisms relating two classes with the same square. In the case of bielliptic surfaces, homology classes are expressed as $a\sfe+b\sff$, but it does not depend only on the square $ab$. It would be interesting to see if a similar change of variables yields a nice expression for the generating series. 
		
		\subsubsection{Regularity of the refined invariants.} We only proved a regularity result for the generating series of enumerative invariants, \textit{i.e.} not involving any $\lambda$-class. We also expect such a quasi-modularity for the latter. It would be interesting to see how it translates to the coefficients of the refined invariants $BG_{g,\varpi}^S(q)$. Meanwhile, the refined invariants have been shown to satisfy regularity results of their own. For instance, in the toric setting, they satisfy a polynomiality property \cite{brugalle2020polynomiality}: coefficients of fixed codegree are ultimately polynomials in the curve class $\varpi$. Computations lead in the toric setting by the author and G. M\'evel in a forthcoming paper suggest there may be a dual \textit{G\"ottsche conjecture}, in the sense that these polynomials may be polynomials in $\varpi\cdot K_S$ and $\varpi^Z$, and depend on the surface only through the numbers $K_S^2$, $\chi(S)$. Given that refined invariants may be defined out of the toric setting through the insertion of $\lambda$-classes, it may be interesting to see if such regularity results extend out of the toric picture.
		
		\subsubsection{Extension of techniques to different situations.} The techniques used to prove the quasi-modularity of the generating series relies on a regularity result for generating series of graphs, inspired by \cite{goujard2019counting}. If we add some congruence conditions to the generating series, we are able to prove quasi-modularity for the congruence subgroup $\Gamma_1(n)$. In our situation, as $n=2,3,4,6$, this group is not really different from the bigger $\Gamma_0(n)$. It should be possible to generalize the techniques to look at generating series of GW-invariants for surfaces which are fibered over an elliptic curve, but with a fiber potentially not an elliptic curve (which is the case of bielliptic surfaces). To prove such regularity results, it may be necessary to look at congruence subgroups for $n\neq 2,3,4,6$, and to refine the modularity properties proven in Theorem \ref{theo-quasi-modularity-periods-multivariable} and \ref{theo-quasi-modularity-graph-sums}.

	\subsection{Organization of the paper}

The paper is organized as follows. In the second section, we give a brief description of the bielliptic surfaces, their classification, their homology groups, and how to construct some families of bielliptic surfaces degenerating to some peculiar reducible surfaces. In the third section, we recall the definition of GW-invariants to set notations, and compute some relative GW-invariants of $E\times\PP^1$, where $E$ is an elliptic curve. Then, we get to the computation of the invariants of bielliptic surfaces, introducing the pearl diagram algorithm. Last, we prove the quasi-modularity of the generating series of enumerative GW-invariants by studying generating series of graphs.
	
\textit{Acknowledgements.} The author would like to thank Francesca Carocci for helpful discussions during a workshop in Belalp, and Georg Oberdieck for helpful discussions concerning modular forms, GW-invariants and pointing out some references on the matter.

\section{Complex bielliptic surfaces and some of their degenerations}

	\subsection{Short historical presentation}
	
\textit{Bielliptic surfaces} (also called sometimes \textit{hyperelliptic surfaces}) can be roughly described as surfaces which are fibered by elliptic curves in two different (transversal) ways, hence the name bielliptic. Bielliptic surfaces form one of the classes of Kodaira dimension $0$ in the Kodaira-Enriques classification of surfaces. The latter is exposed in \cite{beauville1978surfaces}. The Kodaira dimension $0$ surfaces include K3 surfaces, Enriques surfaces, Abelian surfaces and bielliptic surfaces. The latter play to Abelian surfaces the role of Enriques surfaces to K3 surfaces as they appear as quotient of the latter, but have a non-trivial canonical class. More precisely, a complex surface with trivial canonical bundle is either a K3 surface or an Abelian surface, depending on whether it is simply connected or not. Enriques surfaces are quotients of K3 surfaces, while bielliptic surfaces appear as quotients of Abelian surfaces. The canonical classes of the latter are not trivial but only numerically trivial.

\smallskip

The classification of bielliptic surfaces into seven different types was established in Bagnera-de-Francis \cite{bagnera1908superficie} and Enriques-Severi \cite{enriques1910memoire} at the beginning of the XXth century. The current classification presents the bielliptic surfaces as the quotient of a product of elliptic curves by the action of a finite abelian group. It appears in \cite{tatsuo1970hyperelliptic}. Their Picard group has been computed by F. Serrano in \cite{serrano1989divisors}, who also computed their homology groups \cite[Theorem 4.3]{serrano1991picard}. The homology with rational coefficients can easily be computed, but the torsion part is more delicate. One of the results of \cite{tatsuo1970hyperelliptic} is that the seven different types of bielliptic surfaces are classified by their topology. However, this does their GW-invariants from being equal. As we will see, different families of bi-elliptic have the same GW-invariants of we do not care about the torsion part.

	\subsection{Presentation of the classification} \label{sec-classification-biell-surf}

Let $E$ and $F$ be two elliptic curves, and let $G$ be a finite subgroup of $F$, acting on the latter by translation. With additional assumptions, we can make it act on $E$ as well, depending on the automorphisms of $E$. Provided the quotient $S=(E\times F)/G$ is not an abelian surface, we call it a \textit{bielliptic surface}. They are classified as follows.

\begin{tabular}{lp{6cm}lp{6cm}}
$(a1)$ & Take any $F$ and $E$, $G=\ZZ/2\ZZ$ acts on $E$ by
$\sigma(z)=-z$. & $(a2)$ & Take $G=(\ZZ/2\ZZ)^2$ by adding $\sigma'(z)=z+c$, with $c$ of $2$-torsion. \\
$(b1)$ & Take any $F$ and $E=\CC/\gen{1,\rho}$, $G=\ZZ/3\ZZ$ acts on $E$ by
$\sigma(z)=\rho z$. & $(b2)$ & Take $G=(\ZZ/3\ZZ)^2$ by adding $\sigma'(z)=z+c$, with $c$ is fixed by $\sigma$. \\
$(c1)$ & Take any $F$ and $E=\CC/\gen{1,i}$, $G=\ZZ/4\ZZ$ acts on $E$ by
$\sigma(z)=iz$. & $(c2)$ & Take $G=\ZZ/4\ZZ\times \ZZ/2\ZZ$ by adding $\sigma'(z)=z+c$, with $c=\frac{1+i}{2}$. \\
$(a1)$ & Take any $F$ and $E=\CC/\gen{1,\rho}$, $G=\ZZ/6\ZZ$ acts on $E$ by
$\sigma(z)=-\rho^2 z$. & & \\
\end{tabular}

\begin{rem}
For a generic elliptic curve, its automorphism group is generated by $z\mapsto -z$ and translations. To get more sophisticated automorphisms, one needs to consider the elliptic curves $\CC/\gen{1,i}$ and $\CC/\gen{1,\rho}$ which have a bigger automorphism group.
\end{rem}

The bielliptic surface $S=(E\times F)/G$ inherits two projections:
\begin{itemize}[label=$\ast$]
\item The projection onto $\widetilde{F}=F/G$, which is an elliptic curve since $G$ acts on $F$ by translations. The fiber of this projection is $E$.
\item The projection onto $E/G$, which is biholomorphic to the Riemann sphere $\PP^1$ since the action is not free. The generic fiber is $F$. More generally, the fiber over a point $p$ is the elliptic curve $F/\mathrm{Stab}(p)$.
\end{itemize}

It is possible to get families of bi-elliptic surfaces by varying the base curve $F$. It is only possible to vary $E$ for surfaces of type $(a)$.

\begin{figure}
\begin{center}
\begin{tabular}{|c|c|c|c|c|}
\hline
type & $\tau$ & $G$ & $\mathrm{Tors}(H^2(S,\ZZ))$ & basis $(\sff,\sfe)$ of $\mathrm{Num}(S)$\\
\hline
$(a1)$ & any & $\ZZ/2\ZZ$ & $(\ZZ/2\ZZ)^2$ & $(1/2)[F],[E]$ \\
$(a2)$ & any & $\ZZ/2\ZZ\oplus\ZZ/2\ZZ$ & $\ZZ/2\ZZ$ & $(1/2)[F],(1/2)[E]$  \\
\hline
$(b1)$ & $\rho$ & $\ZZ/3\ZZ$ & $\ZZ/3\ZZ$  & $(1/3)[F],[E]$  \\
$(b2)$ & $\rho$ & $\ZZ/3\ZZ\oplus\ZZ/3\ZZ$ & $0$  & $(1/3)[F],(1/3)[E]$  \\
\hline
$(c1)$ & $i$ & $\ZZ/4\ZZ$ & $\ZZ/2\ZZ$ & $(1/4)[F],[E]$  \\
$(c2)$ & $i$ & $\ZZ/4\ZZ\oplus\ZZ/2\ZZ$ & $0$  & $(1/4)[F],(1/2)[E]$  \\
\hline
$(d)$ & $\rho$ & $\ZZ/6\ZZ$ & $0$ & $(1/6)[F],[E]$  \\
\hline
\end{tabular}
\label{fig-table-classification}
\caption{Summary of the classification of the bielliptic surface. On the second column, the parameter of the elliptic curve $E=\CC/\gen{1,\tau}$, third row the torsion part in the homology groups, and last, a basis of the second homology group modulo torsion.}
\end{center}
\end{figure}

	\subsection{Homology and cohomology groups}
	
	Given that a bielliptic surface is obtained as the quotient of a torus by a fixed point free group action, it is easy to compute its homology with $\QQ$-coefficient using Poincar\'e duality, universal coefficients, and that its cohomology groups are the subgroups of $H^*(E\times F,\QQ)$ stabilized by the group action. We get that its Betti numbers are $1,2,2,2,1$. In particular, The group $\mathrm{Num}(S)$ (equal to $H_2(S,\ZZ)$ modulo torsion) is of rank $2$.
	
	For each bielliptic surface $S$, the group $\mathrm{Num}(S)$ contains the classes of fibers $[E]$ and $[F]$ of the two projections. Moreover, they satisfy $[E]\cdot [F]=|G|$, so that they span a sublattice of $\mathrm{Num}(S)$ of index $|G|$. A basis of $\mathrm{Num}(S)$ in terms of $[E]$ and $[F]$ is provided by \cite[Theorem 1.4]{serrano1991picard}. We denote this basis by $(\sfe,\sff)$, with $\sfe$ proportional to $[E]$ and $\sff$ proportional to $[F]$. We now have $\sfe\cdot\sff = 1$.
	
	The first cohomology group (up to torsion) is generated by $\alpha$ and $\beta$, which are actually pull-backs of generators of $H^1(\widetilde{F},\ZZ)$ by the projection $S\to\widetilde{F}$. We get elements of $H^2(S,\ZZ)$: $\alpha\beta$ on one side, pull-back of a generator of $H^2(\widetilde{F},\ZZ)$ by the projection $S\to\widetilde{F}$, and $\omega$ on the other side, which is the pull-back of a generator of $H^2(\PP^1,\ZZ)$ by the projection $S\to E/G=\PP^1$. The cup product yields the generators of $H^3(S,\ZZ)$ $\omega\alpha$ and $\omega\beta$. Up to torsion, $H^*(S,\ZZ)$ is generated by $\alpha,\beta$ and $\omega$, which are of respective degree $1,1$ and $2$, are squared to $0$ and commute (up to the grading). The class $\omega\alpha\beta\in H^4(S,\ZZ)$ is Poincar\'e dual to the class of a point and is denoted by $\pt$.
	
	\begin{rem}
	Universal coefficients theorem proves that the (co)homology of a bielliptic surface has the following form:
	$$H_*(S,\ZZ)=\left\{\begin{array}{l}
	\ZZ \\
	\ZZ^2\oplus T \\
	\ZZ^2\oplus T \\
	\ZZ^2 \\
	\ZZ \\
	\end{array}\right. , \  H^*(S,\ZZ)=\left\{\begin{array}{l}
	\ZZ \\
	\ZZ^2 \\
	\ZZ^2\oplus T \\
	\ZZ^2\oplus T \\
	\ZZ \\
	\end{array}\right.$$
	where $T$ is a torsion group. This torsion group is given  in Figure \ref{fig-table-classification}, and is computed in \cite{serrano1991picard}. We do not need this torsion in this paper.
	\end{rem}
	

\subsection{Construction of some degenerations}
\label{sec-construction-degeneration}

To compute the Gromov-Witten invariants of bielliptic surfaces, we use the degeneration formula \cite{li2002degeneration}. To do so, we first need to construct some degenerations, \textit{i.e.} some families of bielliptic surfaces with a reducible central fiber. This is done by degenerating the base $\widetilde{F}=F/G$ of the first elliptic fibration. More precisely, the curve $\widetilde{F}$ is degenerated to a necklace of $\PP^1$, glued to each other at $0$ and $\infty$. Meanwhile, the surface $S$ degenerates to a necklace of surfaces $X$ which are $\PP^1$-bundle over an elliptic curve $\widetilde{E}$ obtained as projective completion of a torsion line bundle, glued to each other along the divisors given by their $0$-section and $\infty$-section. The elliptic curve $\widetilde{E}$ is the quotient of $E$ by the subgroup of $G$ which acts on $E$ by translations. The number of surfaces in the necklace can be arbitrary. We give below the precise nature of $X$ according to the type of $S$:

\begin{center}
\begin{tabular}{rl}
$(a1),(b1),(c1),(d)$ & $X$ is the trivial $\PP^1$-bundle over $\widetilde{E}(=E)$, \\
$(a2),(c2)$ & $X$ is a $2$-torsion $\PP^1$-bundle over $\widetilde{E}$, \\
$(b2)$ & $X$ is a $3$-torsion $\PP^1$-bundle over $\widetilde{E}$. \\
\end{tabular}
\end{center}

	\subsubsection{The base curve.}
	
	Consider the (infinite) fan in $\RR^2$ formed by rays $\RR_{\geqslant 0}\sbino{n}{1}$ for each $n\in\ZZ$. We apply the construction of toric surfaces from \cite{fulton1993introduction}, yielding a complex surface. More precisely, to each cone of the fan generated by two consecutive rays corresponds an affine chart $U_n=\CC^2$ with coordinates $(x_n,y_n)$, and $U_n$ is glued to $U_{n+1}$ over $\CC^*\times\CC$ by the following isomorphism:
	$$(x_n,y_n)\longmapsto (x_n^2 y_n,\frac{1}{x_n}).$$
	
	\medskip
	
	The fan possesses a projection onto the second coordinate. Therefore, the surface inherits a map to $\CC$. In each chart $U_n$, it expresses as the monomial $t_n=x_ny_n$. The surface is denoted by $\tFFF$ and fibers of the map to $\CC$ by $\tFFF_t$. The fiber $\tFFF_t$ for $t\neq 0$ is $\CC^*$, while the central fiber $\tFFF_0$ is an infinite chain of $\PP^1$, each one glued to the next along their toric divisors $0$ and $\infty$.
	
	\medskip
	
	For any $L\geqslant 1$ and $\lambda\in\CC^*$, we have a $\ZZ$-action on $\tFFF$. Taking as coordinates on the dense torus $x=\sbino{1}{0}$ and $t=\sbino{0}{1}$, it is generated by the following automorphism, restricting to
	$$(x,t)\longmapsto (\lambda t^L x,t),$$
	on the dense torus. This action is a lift to $(\CC^*)^2$ of the action of $\left(\begin{smallmatrix} 1 & L \\ 0 & 1 \\ \end{smallmatrix}\right)$ on the fan in $\RR^2$. Thus, the action extends to $\tFFF$ and maps the chart $U_n$ to $U_{n+L}$, and the induced action on the central fiber is just the translation by $L$ copies of $\PP^1$ and multiplication by $\lambda\in\CC^*$. We denote by $\varphi_L:\tFFF\to\tFFF$ the generator of this action. Taking the quotient by this action, we get a family $\FFF$ whose fiber over $t\neq 0$ is the elliptic curve $\FFF_t=\CC^*/\gen{\lambda t^L}$, and the central fiber $\FFF_0$ is a necklace of $L$ copies of $\PP^1$ glued over their toric divisors.
	
	\begin{rem}
	The family $\FFF$ is related to the Tate curve $\CC[[t]]/t^\ZZ$.
	\end{rem}

	\subsubsection{Degenerations of the first kind.} We extend the previous construction by making the product with a fixed elliptic curve $E$ to construct families of bielliptic surfaces. Let $E$ be an elliptic curve and $\psi:E\to E$ be an automorphism. Then, we consider the joint action
	$$\Phi:(p,z)\in\tFFF\times E\longmapsto (\varphi_L(p),\psi(z))\in\tFFF\times E.$$
	This action is a lift of the action on $\tFFF$. Let $\SSS$ be the quotient by this action, which by construction possesses a map to $\FFF$ with fibers $E$. Composing with $\FFF\to\CC$, the fiber $\SSS_t$ over $t$ of this family is a bielliptic surface which depends on the nature of $\psi$.
	\begin{itemize}[label=$\ast$]
	\item any $E$ and $\psi(z)=-z$ leads to type $(a1)$,
	\item $E=\CC/\gen{1,\rho}$ and $\psi(z)=\rho z$ leads to type $(b1)$,
	\item $E=\CC/\gen{1,i}$ and $\psi(z)=iz$ leads to type $(c1)$,
	\item $E=\CC/\gen{1,\rho}$ and $\psi(z)=-\rho z$ leads to type $(d)$.
	\end{itemize}
	
	Meanwhile, the central fiber is also a $E$-bundle over $\FFF_0$, which is thus a reducible surface. Each irreducible component is the product $\PP^1\times E$. However, the global space is not $\FFF_0\times E$ since there is some monodromy when going across the direction of the necklace of $\PP^1$.

	\subsubsection{Degenerations of the second kind.} We now twist the previous construction to get the families of type $(\bullet 2)$, when there is some monodromy in the other direction of each fiber $\FFF_t$. We still assume that we have an elliptic curve $E$ with an automorphism $\psi$. Moreover, let $c$ be a fixed point of $\psi$, which is thus of $n$-torsion for some $n$, and let $\zeta$ be a $n$th-root of unity. We now consider the additional $\ZZ/n\ZZ$-action on $\tFFF$ generated by
	$$\zeta\cdot-:(x,t)\longmapsto (\zeta x,t).$$
	The fixed points of this action are exactly the corners of the toric surface, \textit{i.e.} the $0$ in each chart $U_n$. The quotient develops singularities at the fixed points according to $n$.
	
	\begin{expl}
	When you take $n=2$ and $\zeta=-1$, you have quadratic singularities at the nodes. For bigger $n$, one would get an $A_n$ singularity.
	\end{expl}
	
	We lift this automorphism to an automorphism of $\tFFF\times E$ as follows:	
	$$\Psi:(p,z)\in\tFFF\times E\longmapsto (\zeta\cdot p,z+c).$$
	Due to the translational part, this automorphism is now fixed point free. Moreover, as $c$ is a fixed point of $\psi$, it commutes with $\Phi$ so that we now have a free action of $\ZZ/n\ZZ\times\ZZ$ on $\tFFF\times E$. We still denote the quotient by $\SSS$, and it is still endowed with a map to $\CC$ whose fibers are denoted by $\SSS_t$. If we only quotient by the $\ZZ/n\ZZ$-action, the fibers are as follows:
	\begin{itemize}[label=$\ast$]
	\item The fiber over $t\neq 0$ is the quotient of $\CC^*\times E$ by $(x,z)\mapsto (\zeta x,z+c)$. It can be seen as a bundle in two ways: as a $E$-bundle over $\CC^*/\gen{\zeta}\simeq\CC^*$ which has monodromy $z\mapsto z+c$, and as a $\CC^*$-bundle over $E/(z\mapsto z+c)$ with monodromy $x\mapsto\zeta x$. It is thus the open stratum of a $n$-torsion line bundles over $E/(z\mapsto z+c)$.
	\item Each irreducible component of the central fiber $\tFFF_0$ is $\PP^1\times E$ and stabilized by $\Psi$. The action on each component is also $(x,z)\mapsto (\zeta\cdot x,z+c)$. Using the projection to $E/(z\mapsto z+c)$, the quotient is the total space of a $n$-torsion $\PP^1$ bundle over $E/(z\mapsto z+c)$. The central fiber is thus a chain of $n$-torsion $\PP^1$-bundles over $E/(z\mapsto z+c)$ glued along their $0$ and $\infty$-section.
	\end{itemize}
	
	When quotienting by the induced action of $\Phi$, we get a family $\SSS_t$ whose generic member is a bielliptic surface, and whose special fiber is a necklace of $L$ $n$-torsion $\PP^1$-bundles glued along their $0$ and $\infty$-sections. We take the following special values:
	\begin{itemize}[label=$\ast$]
	\item any $E$, $\psi(z)=-z$, $n=2$ and $\zeta=-1$ leads to generic member of type $(a2)$,
	\item $E=\CC/\gen{1,\rho}$, $\psi(z)=\rho z$, $n=3$ and $\zeta=\rho$ leads to generic member of type $(b2)$,
	\item $E=\CC/\gen{1,i}$, $\psi(z)=iz$, $n=2$ and $\zeta=-1$ leads to a generic member of type $(c2)$.
	\end{itemize}

\section{Gromov-Witten invariants}

We review in this section the definition of the Gromov-Witten invariants (GW-invariants) that we compute for bielliptic surfaces. The use of the \textit{degeneration formula} expresses them in terms of \textit{relative} GW invariants of $\PP^1$-bundles over an elliptic curve which we also review. This section is mainly used to fix notations.

	\subsection{Definitions}

	\subsubsection{Gromov-Witten invariants of bielliptic surfaces.} Let $S$ be a bielliptic surface. Let $\varpi=a\sfe+b\sff\in H_2(S,\ZZ)$ be an homology class. We have the moduli space of genus $g$ stable maps to $S$ with $n$ marked points realizing the class $\varpi$, which is denoted by
$$\M_{g,n}(S,\varpi).$$
It is endowed with a virtual fundamental class $\vir{\M_{g,n}(S,\varpi)}$ of real dimension $2(n+g-1)$. The real dimension is the double of the complex dimension, whic we need to consider due to the appearance of odd-dimensional cohomology classes. For each marked point we have an evaluation map
$$\ev_i:\M_{g,n}(S,\varpi) \to S,$$
which maps a stable map to the image of the $i$-th marked point, and the forgetful map
$$\mathrm{ft}:\M_{g,n}(S,\varpi)\to\overline{M}_{g,n}.$$

\smallskip

Moreover, if $\pi:\CCC\to\M_{g,n}(S,\varpi)$ is the universal curve, and $\omega_\pi$ is the relative dualizing sheaf, we have the Hodge bundle $\EE$ obtained by pushing forward the latter. Its fiber over a stable map $f:C\to S$ is $H^0(C,\omega_C)$. The bundle $\EE$ is $g$-dimensional since $H^0(C,\omega_C)$ is a vector space of dimension $g$. Its Chern classes are called the $\lambda$-classes:
$$\lambda_j=c_j(\EE)\in H^{2j}(\M_{g,n}(S,\varpi)).$$

\smallskip

We obtain GW invariants by capping cohomology classes obtained by pull-back with $\mathrm{ft}$ or some $\ev_i$ with the virtual fundamental class. If $\lambda\in H^*(\overline{\M_{g,n}},\QQ)$ and $\gamma_i\in H^*(S,\QQ)$, we set
$$\gen{\lambda;\gamma_1,\cdots,\gamma_n}^S_{g,\varpi} = \int_{\vir{\M_{g,n}(S,\varpi)}}\mathrm{ft}^*(\lambda)\prod_1^n\ev_i^*(\gamma_i),$$
which is non-zero if the cohomology class has degree $2(n+g-1)$. If there is no $\lambda$, the invariants are called \textit{enumerative}. We consider the following class insertions.
\begin{itemize}[label=$\ast$]
\item First, we only consider point insertions via the evaluation map. We take $n=g-1$ and consider the following enumerative invariants:
$$\gen{\pt^{g-1}}_{g,\varpi}^S =\int_{\vir{\M_{g,n}(S,\varpi)}} \prod_{i=1}^n\ev_i^*(\pt).$$

\item We take $n=g$ and add cohomology insertions Poincar\'e dual to $1$-cycles:
$$\gen{\omega\alpha,\omega\beta,\pt^{g-2}}_{g,\varpi}^S =\int_{\vir{\M_{g,n}(S,\varpi)}} \ev_1^*(\omega\alpha)\ev_2^*(\omega\beta)\prod_{i=3}^g\ev_i^*(\pt).$$

\item Following \cite{bousseau2019tropical}, the refinement corresponds to the insertion of a $\lambda$-class in the GW-invariants. To refine the above enumerative invariant, we insert a $\lambda$-class: we take $n=g_0-1$ and get
$$\gen{\lambda_{g-g_0};\pt^{g_0-1}}_{g,\varpi}^S =\int_{\vir{\M_{g,n}(S,\varpi)}} \lambda_{g-g_0}\prod_{i=1}^n\ev_i^*(\pt).$$

\item Similarly with the $1$-cycle insertions:
$$\gen{\lambda_{g-g_0};\omega\alpha,\omega\beta,\pt^{g_0-2}}_{g,\varpi}^S =\int_{\vir{\M_{g,n}(S,\varpi)}} \lambda_{g-g_0} \ev_1^*(\omega\alpha)\ev_2^*(\omega\beta)\prod_{i=3}^{g_0}\ev_i^*(\pt).$$
\end{itemize}

\begin{rem}
By construction, we consider all the complex curves realizing a given homology class (up to torsion). As the Picard group of a bielliptic surface is of dimension $1$, it is instead possible to consider curves belonging to a fixed linear system having given Chern class in $H^2(S,\ZZ)$. Using Theorem 2.1 and Corollary 2.2 from \cite{bryan1999generating} or \cite[Section 4]{kool2011reduced}, these conditions are actually equivalent to the insertions of cohomology classes Poincar\'e dual to a basis of $H_1(S,\ZZ)$ (modulo torsion), which are precisely the invariants considered above. As we did not care about torsion, the invariants counting curves in a fixed linear system are to be understood as the sum over the Chern classes differing by a torsion element.
\end{rem}

\begin{rem}
As the cohomology ring is graded commutative, everything commutes as long as all classes have even degree, which is the case of the classes $\pt$ and $\lambda_j$. However, the classes $\omega\alpha$ and $\omega\beta$ have odd degree. This is why we can only insert one of each. The skew-symmetry relation forces the GW-invariant to be $0$ if we insert more.
\end{rem}

\subsubsection{Relative invariants of $E\times\PP^1$.} The other family of GW invariants that we consider are \textit{relative invariants} of $X=\widetilde{E}\times\PP^1$, where $\widetilde{E}$ is an elliptic curve. The invariants are called \textit{relative} because we care about the intersection profile of the curves with the $0$-section and $\infty$-section, denoted respectively by $D^-$ and $D^+$. The second homology group $H_2(X,\ZZ)$ is generated by the class of a fiber $[\PP^1]$ and the class of a section $[\widetilde{E}]$, which we respectively denote by $\sfp$ and $\sfs$. Notice that $D^++D^-$ is an anticanonical divisor.

\begin{rem}
Technically, we also care about the surfaces $X$ which are the projective completion of a torsion line bundle over $\widetilde{E}$. However, it is possible to include all degree $0$ line bundle over $\widetilde{E}$ in a family, so that they are deformation equivalent and have the same GW invariants. Therefore, we may only care about $X=\widetilde{E}\times\PP^1$.
\end{rem}

Let $\varpi=a\sfs+b\sfp\in H_2(X,\ZZ)$ and $\mu\in\ZZ^n$ be a tuple of numbers. We have the moduli space of genus $g$ stable maps to $X$ relative to $D^\pm$ having tangency profile $\mu$, denoted by
$$\M_{g,n}(X/D^-\cup D^+,\varpi,\mu).$$
The tangency profile encoded in $\mu$ means the following:
	\begin{itemize}[label=-]
	\item If $\mu_i=0$, the marked point is mapped to the interior of $X$, \textit{i.e.} not to $D^\pm$.
	\item If $\mu_i>0$, the marked point is mapped to $D^+$ and the tangency order is $\mu_i$.
	\item If $\mu_i<0$, the marked point is mapped to $D^-$ and the tangency order $-\mu_i$.
	\end{itemize}
	For the moduli space to be non-empty, we need to have $\varpi\cdot D^+=\varpi\cdot D^-=\varpi\cdot [\widetilde{E}]$. In other words:
	$$\sum_{\mu_i>0}\mu_i=-\sum_{\mu_i<0}\mu_i=\varpi\cdot [E].$$

The above moduli space is a proper DM stack which possesses a virtual fundamental class in degree $2(g-1+n)$. The moduli space has evaluation maps for each marked point
$$\ev_i:\M_{g,n}(X/D^\pm,\varpi,\mu) \longrightarrow \left\{ \begin{array}{l}
X \text{ if }\mu_i=0,\\
D^\pm\simeq \widetilde{E} \text{ if }\mu_i\neq 0.\\
\end{array}\right.$$

Relative GW invariants are obtained by capping cohomology classes against the virtual fundamental class. We will consider the following cycle classes:
	\begin{itemize}[label=$\ast$]
	\item For points not mapped to $D^\pm$, we have the class $\pt_0\in H^4(X,\ZZ)$, which is Poincar\'e dual to a point. The index $0$ is to recall that it is a class associated to the interior of $X$.
	\item For points mapped to one of the divisors $D^\pm$, we pull-back  cohomology classes from $H^*(\widetilde{E},\ZZ)$. Let $1_\bullet,\alpha_\bullet,\beta_\bullet,\pt_\bullet=\alpha_\bullet\beta_\bullet$ be a basis of $H^*(\widetilde{E},\ZZ)$. The indices are meant to be the tangency order of the corresponding points. See below.
	\end{itemize}
	
\begin{rem}
	The cup-product is skew-symmetric, so relative invariants are symmetric in the entries of even degree, but skew-symmetric in the entries of odd degree, provided they have the same index, otherwise we do not have symmetry. Thus, one needs to be careful about the order of the insertions in case we have insertions of odd degree.
\end{rem}	
	
	We consider the following (relative) GW invariants.
	\begin{itemize}[label=$\ast$]
	\item Assume that $\mu\in\ZZ^{n+1}$, and all elements are nn-zero except $\mu_0=0$. We have the genus $1$ enumerative invariant with only point insertions:
	$$\gen{\pt_0,1_{\mu_1},\pt_{\mu_2},\cdots,\pt_{\mu_n}}_{1,a\sfs+b\sfp}^{X/D^\pm}.$$
	The indices encode the tangency orders of the marked points. Here, each marked point is coupled to a point constraint, except the one associated to $\mu_1$ which is free.
	
	\item Similarly, we have the invariants with two $1$-cycle insertions:
	$$\gen{ \pt_0,\alpha_{\mu_1},\beta_{\mu_2},\pt_{\mu_3},\cdots,\pt_{\mu_n}  }_{1,a\sfs+b\sfp}^{X/D^\pm}.$$
	
	\item We can consider the same cycle insertions if we increase the genus, provided we insert a $\lambda$-class to match dimensions:
	$$\gen{\lambda_{g-1};\pt_0,1_{\mu_1},\pt_{\mu_2},\cdots,\pt_{\mu_n}}_{g,a\sfs+b\sfp}^{X/D^\pm}.$$
	$$\gen{ \lambda_{g-1};\pt_0,\alpha_{\mu_1},\beta_{\mu_2},\pt_{\mu_3},\cdots,\pt_{\mu_n}  }_{g,a\sfs+b\sfp}^{X/D^\pm}.$$
	\end{itemize}

	\subsection{A vanishing result}
	
	In this section, we prove that it is possible to trade the insertion of odd-dimensional cohomology classes of a bielliptic surface $S$ to point constraints. This is inspired by \cite[Lemma 4]{bryan2018curve}. This allows us to only care about the GW-invariants with point insertions.
	
	We consider the moduli space $\M_{g,n}(S,\varpi)$, endowed with its evaluation map. The elliptic surface is endowed with the action by translation of the elliptic curve $F$. The action is not free, but stabilizers are finite. The action extends to the moduli space of curves $\M_{g,n}(S,\varpi)$. Let $\M_{g,n}^0(S,\varpi)$ be the quotient space (\textit{i.e.} stable maps to $S$ up to translation by $F$). We have the following commutative diagram, where $S^n/F$ is the quotient of $S^n$ by the diagonal action of $F$:
	$$\begin{tikzcd}
	\M_{g,n}(S,\varpi) \arrow[d,"\mathrm{pr}"] \arrow[r,"\ev"] & S^n \arrow[d,"p"] \\
	\M_{g,n}^0(S,\varpi) \arrow[r] & S^n/F \\
	\end{tikzcd} ,$$
	where $\mathrm{pr}$ and $p$ are the projection to the quotient.

\begin{prop}\label{prop-vanishing-result-translation-classes}
Let $\lambda\in H^*(\overline{\M}_{g,n},\QQ)$, $\gamma\in H^*\left( S^n/F ,\QQ\right)$ be an arbitrary class. If $\gamma_1\in H^*(\widetilde{E},\QQ)$ is of degree smaller than $1$, then
$$\int_{\vir{\M_{g,n}(S,\varpi)}} \mathrm{ft}^*(\lambda)\cup\ev_1^*(\gamma_1)\cup\ev^*p^*(\gamma)=0.$$
\end{prop}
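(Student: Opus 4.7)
The strategy is to exploit the translation action of $F$ on $\M_{g,n}(S,\varpi)$ and fiber-integrate along its orbits, where a degree count rules out a non-zero contribution.

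First, I would verify that each of the three factors in the integrand behaves well under $\mathrm{pr}$. The $F$-action alters only the image of a stable map while fixing its domain and marked points, so the forgetful morphism $\mathrm{ft}$ is $F$-invariant and factors as $\mathrm{ft} = \overline{\mathrm{ft}} \circ \mathrm{pr}$; hence $\mathrm{ft}^*(\lambda) = \mathrm{pr}^*\overline{\mathrm{ft}}^*(\lambda)$. The $F$-equivariance of $\ev$ with respect to the diagonal action on $S^n$ gives, using the stated commutative diagram, $\ev^*p^*(\gamma) = \mathrm{pr}^*\overline{\ev}^*(\gamma)$. Finally, the obstruction theory on $\M_{g,n}(S,\varpi)$ is induced by the $F$-equivariant deformation theory of $S$, so $\vir{\M_{g,n}(S,\varpi)}$ is $F$-invariant and descends to a virtual class on the quotient stack $\M_{g,n}^0(S,\varpi)$.

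Second, combining these observations with the projection formula, the integral becomes
$$I = \int_{\vir{\M_{g,n}^0(S,\varpi)}} \overline{\mathrm{ft}}^*(\lambda)\cup\overline{\ev}^*(\gamma)\cup\mathrm{pr}_*\ev_1^*(\gamma_1).$$
The last factor is a fiber integration: the restriction of $\ev_1$ to a fiber of $\mathrm{pr}$ is an orbit map $F\to S$, so $\mathrm{pr}_*\ev_1^*(\gamma_1)$ is obtained by integrating $\gamma_1$ along these orbits and lowers the complex cohomological degree by exactly $\dim_\CC F=1$.

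The degree hypothesis now closes the argument: since $\gamma_1$ has complex degree strictly less than $1$, the class $\mathrm{pr}_*\ev_1^*(\gamma_1)$ lives in negative degree and therefore vanishes. Consequently $I=0$.

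The main technical obstacle is that the $F$-action on $\M_{g,n}(S,\varpi)$ need not be free: stable maps whose image lies in a single $E$-fiber of $S\to\widetilde{F}$ pick up finite stabilizers, and the components of abelian covers mentioned in the introduction can even carry continuous ones. I would handle this by working with the quotient stack $[\M_{g,n}(S,\varpi)/F]$, where rational equivariant intersection theory makes the projection formula and the fiber-integration step above rigorous without disturbing the degree count that drives the vanishing.
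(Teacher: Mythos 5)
Your argument is essentially the paper's proof: both descend the $\lambda$- and $\gamma$-insertions together with the virtual class to the quotient $\M_{g,n}^0(S,\varpi)$ by the translation action of $F$, apply the push-pull formula, and then kill $\mathrm{pr}_*\ev_1^*(\gamma_1)$ by a degree count over the one-complex-dimensional fibers of $\mathrm{pr}$. The only slip is your worry about continuous stabilizers: since stable maps have finite automorphism groups and translations of $S$ act without fixed points, the $F$-stabilizer of any stable map is finite (as the paper notes), so the finite-quotient formalism you invoke already suffices.
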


\begin{proof}
Similarly to \cite[Lemma 4]{bryan2018curve}, the class
$$(\mathrm{ft}^*(\lambda)\cup\ev^*p^*(\gamma))\cap\vir{\M_{g,n}(S,\varpi)},$$
is the pull-back of a class $\theta$ on $\M_{g,n}^0(S,\varpi)$ by $\mathrm{pr}$. Using push-pull formula, we get
$$\mathrm{pr}_*(\ev_1^*(\gamma_1)\cap\mathrm{pr}^*(\theta))=\mathrm{pr}_*\ev_1^*(\gamma_1)\cap\theta.$$
But as $\gamma_1$ is of rank smaller than $1$ and fibers of $\mathrm{pr}$ are (finite quotients of ) $F$, we get $0$ for dimensional reasons.
\end{proof}

\begin{rem}
An intuitive way of viewing the vanishing is as follows: constraints from $\mathrm{ft}^*(\lambda)\cup\ev^*p^*(\gamma)$ only fix the curves up to the action by $F$, and the remaining condition $\ev_1^*(\gamma_1)$ is not enough to finish fixing them.
\end{rem}

We now apply Proposition \ref{prop-vanishing-result-translation-classes} to some specific classes. Recall that $H^*(S,\QQ)$ is generated by $\alpha,\beta$ pull-backs of generators of $H^1(\widetilde{F},\QQ)$, and $\omega$ pull-back for a generator of $H^2(\PP^1,\QQ)$, with relations $\alpha^2=\beta^2=\omega^2=0$. Using de Rham cohomology, these generators can be seen as the elements $\dd x_1\wedge\dd x_2$, $\dd x_3$ and $\dd x_4$ of $H^*(E\times F,\RR)\simeq H^*\left( (S^1)^4,\RR\right)$ which are invariant by the $G$-action whose quotient is the bielliptic surface $S$.

\begin{coro}
We have the following equality:
$$\gen{\lambda;\omega\alpha,\omega\beta,\pt^{n-2}}_{g,\varpi}^S = \frac{\omega\cdot\varpi}{n-1}\gen{\lambda;\pt^{n-1}}_{g,\varpi}^S.$$
\end{coro}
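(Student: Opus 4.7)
The plan is to combine the divisor equation for the class $\omega$ with Proposition \ref{prop-vanishing-result-translation-classes}. Since $\omega$ is the pullback of a divisor class from $\PP^1=S/F$, and any Hodge class $\lambda$ descends from $\overline{\M}_g$ (hence from $\overline{\M}_{g,n-1}$), the standard divisor axiom gives
\[
\gen{\lambda;\omega,\pt^{n-1}}_{g,n,\varpi}^S = (\omega\cdot\varpi)\,\gen{\lambda;\pt^{n-1}}_{g,\varpi}^S,
\]
which reduces the corollary to the identity
\[
(n-1)\,\gen{\lambda;\omega\alpha,\omega\beta,\pt^{n-2}}_{g,n,\varpi}^S = \gen{\lambda;\omega,\pt^{n-1}}_{g,n,\varpi}^S
\]
of integrals over $\vir{\M_{g,n}(S,\varpi)}$. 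Writing $\omega_i,\alpha_i,\beta_i$ for $\ev_i^*$ of the corresponding classes and using the $S_n$-symmetry of $\mathrm{ft}^*\lambda$ to place the extra $\omega$-insertion at position $1$ on the right, the difference of the two integrands becomes
\[
\mathrm{ft}^*\lambda\cdot\omega_1\omega_2\beta_2\prod_{i=3}^n\pt_i\cdot\bigl[(n-1)\alpha_1-\alpha_2\bigr].
\]

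Next, I will invoke Proposition \ref{prop-vanishing-result-translation-classes} to show this integrates to zero. The key observation is that on $S^n$ equipped with the diagonal translation action of $F$, the differences $\alpha_i-\alpha_j$ and $\beta_i-\beta_j$ are $F$-basic, i.e.\ they descend to $S^n/F$: contracting such a difference with the diagonal vector field $\sum_k\partial/\partial x_3^{(k)}$ or $\sum_k\partial/\partial x_4^{(k)}$ gives $1-1=0$, and all classes are automatically $F$-invariant since translation is isotopic to the identity. Products of such differences with the $F$-basic classes $\omega_k$ remain basic and supply the classes $\gamma\in H^*(S^n/F,\QQ)$ needed in the proposition.

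To conclude, the plan is to decompose the integrand into a sum of terms of the form $\ev_k^*(\gamma_1)\cdot\ev^*p^*(\gamma)$ with $\gamma_1\in\{\alpha,\beta\}$ and $\gamma$ basic, so that each summand vanishes by the proposition. The combinatorial starting point is the decomposition
\[
(n-1)\alpha_1-\alpha_2=\sum_{k=2}^n(\alpha_1-\alpha_k)+\sum_{k=3}^n\alpha_k,
\]
whose first sum directly produces basic differences. Each $(\alpha_1-\alpha_k)$, when paired with the remaining factors, is to be split as $\alpha_1\cdot\Theta-\alpha_k\cdot\Theta$ and the proposition is applied at positions $1$ and $k$ respectively. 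The second sum, of standalone $\alpha_k$ for $k\geqslant 3$, is absorbed by the $S_n$-symmetry of $\mathrm{ft}^*\lambda$ (which allows swapping position $1$ with any $k\geqslant 3$ freely) to recombine the terms symmetrically and match them with contributions from the right-hand side.

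The hard part will be the bookkeeping: verifying that after symmetrization, together with the signs enforced by the graded commutativity of $\alpha,\beta$ at distinct positions, the remaining factor accompanying each low-degree insertion is genuinely $F$-basic on $S^n$. The numerical factor $\tfrac{1}{n-1}$ in the corollary emerges as the averaging factor over the $n-1$ positions $\{2,3,\ldots,n\}$ available for the $\beta$-insertion on the left-hand side, equivalently as the count of non-special positions after fixing the $\alpha$-insertion.
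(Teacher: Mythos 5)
Your overall strategy (divisor equation plus Proposition \ref{prop-vanishing-result-translation-classes}) is the right one and is in the same spirit as the paper's, but the reduction you propose has a genuine gap at exactly the point you defer to ``bookkeeping''. To apply Proposition \ref{prop-vanishing-result-translation-classes} to a term such as $(\alpha_1-\alpha_k)\cup\Theta$ with $\Theta=\omega_1\omega_2\beta_2\,\pt_3\cdots\pt_n$, you must exhibit the \emph{entire} complementary factor as $\ev^*p^*(\gamma)$ for some $\gamma\in H^*(S^n/F,\QQ)$, leaving a single insertion of low fiber-degree at one marked point. But $\Theta$ is not basic for the diagonal $F$-action: contracting $\beta_2$ with the orbit direction $(0,v,\dots,0,v)$ is nonzero, and likewise each $\pt_i=\omega_i\alpha_i\beta_i$ fails to be horizontal. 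Invariance (translation being isotopic to the identity) is not enough to descend to $S^n/F$; one needs a basic representative. So the proposition applies neither to $\alpha_1\cdot\Theta$, nor to $\alpha_k\cdot\Theta$, nor to their difference, and no symmetrization of $\mathrm{ft}^*\lambda$ can repair this, since the obstruction sits entirely in the $\ev^*$-factors. (Note also that your ``second sum'' $\sum_{k\geqslant 3}\alpha_k\cdot\Theta$ vanishes identically because $\alpha_k\pt_k=0$; there is nothing there to absorb by symmetry.)

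To close the gap you would have to make \emph{every} insertion except one basic, i.e.\ also replace $\beta_2$ by $\beta_2-\beta_1$ and each $\alpha_i,\beta_i$ ($i\geqslant 3$) by $\alpha_i-\alpha_1$, $\beta_i-\beta_1$, and then expand. This is precisely the paper's proof: it applies the proposition once, with $\gamma_1=\alpha_1\omega_1$ and $\gamma=(\beta_2-\beta_1)\omega_2\prod_{i=3}^n(\alpha_i-\alpha_1)(\beta_i-\beta_1)\omega_i$. Expanding $\gamma_1\cup p^*\gamma$ (using $\alpha_1^2=\beta_1^2=0$) yields $\gen{\lambda;\omega\alpha,\omega\beta,\pt^{n-2}}_{g,\varpi}^S$ once from the leading term, $n-2$ further copies from the cross terms in which a $-\beta_1$ is taken from a position $i\geqslant 3$, and $-\gen{\lambda;\pt,\omega,\pt^{n-2}}_{g,\varpi}^S=-(\omega\cdot\varpi)\gen{\lambda;\pt^{n-1}}_{g,\varpi}^S$ after the divisor equation; the vanishing of the sum gives the corollary. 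In particular the factor $n-1$ arises as $1+(n-2)$ from this expansion, not as an average over the $n-1$ positions available for the $\beta$-insertion.
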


\begin{proof}
We apply Proposition \ref{prop-vanishing-result-translation-classes} to some specific classes. We have $H^*(S^n,\QQ)=H^*(S,\QQ)^{\otimes n}$. We use indices to denote generators associated to different copies, so that $H^*(S^n,\QQ)$ is generated by $\alpha_i,\beta_i$ and $\omega_i$, with $1\leqslant i\leqslant n$. We use de Rham cohomology to determine some classes which are of the form $p^*\gamma$, for some $\gamma\in H^*(S^n/F,\QQ)$. Cohomology classes are represented by $k$-forms on $(E\times F)^n$ with constant coefficients which are invariant by the $G$-action. A sufficient condition for a form on $S^n$ to be the pull-back by $p$ is that it descends to the quotient by $F$, meaning that the form vanishes whenever one of the evaluation belongs to the tangent space of the orbit. The tangent space to the orbits consists in the vector of the form $(0,v,0,v,\cdots,0,v)$.
	\begin{itemize}[label=$\ast$]
	\item The $1$-forms $\alpha_i-\alpha_1$ and $\beta_i-\beta_1$ vanish on the tangent space to the orbits. Thus, they descend to the quotient $S^n/F$, so that they are actually pull-backs $p^*\widetilde{\alpha}_i$ and $p^*\widetilde{\beta}_i$.
	\item The $2$-forms $\omega_i\in H^2(S^n,\RR)$ are invariant by the $F$-action: the evaluation does not depend on the point, and the tangent space to the orbits belongs to the kernel of the form. Thus, these $2$-forms descend to $2$-forms on $S^n/F$, so that $\omega_i$ are of the form $p^*\widetilde{\omega}_i$.
	\end{itemize}
	Thus, the following class is of the form $p^*\gamma$:
	$$(\beta_2-\beta_1)\omega_2\prod_3^n (\alpha_i-\alpha_1)(\beta_i-\beta_1)\omega_i.$$
	We now apply Proposition \ref{prop-vanishing-result-translation-classes} with $\gamma_1=\alpha_1\omega_1$. We have
	\begin{align*}
	\gamma_1\cup p^*\gamma = & \alpha_1\omega_1 (\beta_2-\beta_1)\omega_2\prod_3^n (\alpha_i-\alpha_1)(\beta_i-\beta_1)\omega_i \\
	= & \alpha_1\omega_1\beta_2\omega_2\pt_3\cdots\pt_n - \pt_1\omega_2\pt_3\cdots\pt_n - \sum_{i=3}^n \pt_1\beta_2\omega_2\pt_3\cdots \alpha_i\omega_i\cdots\pt_n.\\
	\end{align*}
	Using the skew-symmetry and the divisor equation to get rid of $\omega_2$, we get the expected relation.
\end{proof}

	\subsection{Invariants with boundary constraints}
	
	We consider relative GW invariants of $X=\widetilde{E}\times\PP^1$ with a minimal number of cycle insertions in the main stratum. Before making any explicit computations, we prove some vanishing results. As $X$ is endowed with an action of $\CC^*$ inherited from the action on $\PP^1$, we need at least one insertion in the main stratum, or for the class to be of the form $w\sfp\in H_2(X,\ZZ)$.
	
	\subsubsection{Enumerative invariants.} We start with the invariants where we only allow cycle insertions.
	
	\begin{lem}\label{lem-non-zero-enum-inv}
	\begin{enumerate}[label=(\roman*)]
	\item If $a\neq 0$, the only non-zero enumerative invariants with a unique point insertion in the main stratum are
	$$\gen{\pt_0,1_{\mu_1},\pt_{\mu_2},\cdots,\pt_{\mu_n}}_{1,a\sfs+b\sfp}^{X/D^\pm},$$
	$$\gen{ \pt_0,\alpha_{\mu_1},\beta_{\mu_2},\pt_{\mu_3},\cdots,\pt_{\mu_n}  }_{1,a\sfs+b\sfp}^{X/D^\pm}.$$
	\item For a class $w[\PP^1]$, the only non-zero invariant with a point insertion is
	$$\gen{\pt_0,1_{-w},1_w}_{0,w\sfp}^{X/D^\pm}.$$
	\item For a class $w\sfp$, the only non-zero invariants without point insertion are
	$$\gen{\pt_{-w},1_w}_{0,w\sfp}^{X/D^\pm} \text{ and }\gen{\alpha_{-w},\beta_w}_{0,w\sfp}^{X/D^\pm},$$
	along with their symmetric ones.
	\end{enumerate}
	\end{lem}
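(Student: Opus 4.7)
The proof rests on virtual dimension matching together with vanishing arguments exploiting the translation symmetry of $\widetilde{E}$ and the specific geometry of vertical curves in class $w\sfp$.

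First, a dimension count. The moduli space $\M_{g,N}(X/D^\pm,\varpi,\mu)$ has virtual real dimension $2(g-1+N)$, where $N$ is the total number of marked points. Decomposing the insertion profile as $p$ copies of $\pt_0$ in the main stratum together with $k$ trivial boundary classes $1_{\mu_i}$, $\ell$ odd boundary classes ($\alpha_{\mu_i}$ or $\beta_{\mu_i}$) and $m$ point boundary classes $\pt_{\mu_i}$, the total cohomological degree is $4p+\ell+2m$, and matching with the virtual dimension yields the linear relation $2p-2k-\ell=2(g-1)$. Substituting $(g,p)=(1,1),(0,1),(0,0)$ for cases (i), (ii), (iii) bounds $(k,\ell)$ to a short list. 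In (i) this already forces $(k,\ell)\in\{(1,0),(0,2)\}$, which are the two shapes stated in the lemma.

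Next, for (i) with the shape $(k,\ell,m)=(0,2,n-2)$, I check that the two odd boundary insertions cannot both be $\alpha$'s (nor both $\beta$'s). This uses the same idea as Proposition \ref{prop-vanishing-result-translation-classes}: since $a\neq 0$, the action of $\widetilde{E}$ on the moduli space has finite stabilizers, the quotient $\M/\widetilde{E}$ has one less complex dimension, and the differences $\alpha_i-\alpha_1$, $\beta_i-\beta_1$ descend to it. Expanding $\alpha_{\mu_1}\cup\alpha_{\mu_2}=(\alpha_{\mu_1}-\alpha_{\mu_2})\cup\alpha_{\mu_2}$ and writing the first factor as a pull-back from the quotient, the fiber integration along $\widetilde{E}$-orbits leaves a factor $\alpha\cup\alpha=0$, so the invariant vanishes, and only the $(\alpha,\beta)$ pairing can contribute.

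For (ii) and (iii), the class $w\sfp$ is vertical: every stable map factors through some fiber $\{e\}\times\PP^1\subset X$, so all boundary marked points with $\mu_i>0$ share the evaluation image $(e,\infty)\in D^+$, and analogously $(e,0)\in D^-$ for $\mu_i<0$. Consequently, the evaluations $\ev_i\colon\M\to\widetilde{E}$ on boundary points lying on the same divisor coincide up to a fixed constant. Wedging two same-side boundary insertions of the same type therefore produces $\alpha\cup\alpha=0$, $\beta\cup\beta=0$, or $\pt\cup\pt=0$. Combined with the tangency constraint $\sum_{\mu_i>0}\mu_i=w=-\sum_{\mu_i<0}\mu_i$ and the linear dimension equation above, the surviving configurations are exactly $N=3$ with shape $(\pt_0,1_{-w},1_w)$ for (ii), and $N=2$ with shapes $(\pt_{-w},1_w)$ or $(\alpha_{-w},\beta_w)$ for (iii).

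The main obstacle is the vanishing of doubled $\alpha$- or $\beta$-insertions in (i): unlike (ii)--(iii), where the evaluations are literally equal, here they are only $\widetilde{E}$-equivariantly so, and one must execute the push-pull argument of Proposition \ref{prop-vanishing-result-translation-classes} with care, verifying that the interior class $\pt_0=\omega\alpha\beta$ does not reintroduce a non-vanishing contribution through the $(0,2)$-bidegree factor $\omega$ pulled back from $\PP^1$.
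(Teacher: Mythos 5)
Your dimension count and your treatment of the vertical classes in (ii)--(iii) are in the spirit of the paper's proof, but there is a genuine gap in case (i): you \emph{substitute} $(g,p)=(1,1)$ instead of proving that only $g=1$ contributes, and the bare dimension relation $2p-2k-\ell=2(g-1)$ does not exclude higher genus. Concretely, with $p=1$ and $g=2$ it gives $k=\ell=0$, i.e.\ the invariant $\gen{\pt_0,\pt_{\mu_1},\cdots,\pt_{\mu_n}}_{2,a\sfs+b\sfp}^{X/D^\pm}$ has exactly the right cohomological degree and nothing in your argument kills it. The paper's proof rules this out with an extra input you never invoke in case (i): by \cite[Proposition 2.13]{blomme2021floor} the zero and pole divisors of a curve satisfy $C_0-C_\infty\equiv 0\in\widetilde{E}$, so the boundary evaluation map lands in the divisor $\{\sum_i\mu_iz_i\equiv 0\}\subset\widetilde{E}^n$; hence the total degree of the boundary insertions can be at most $2(n-1)$ rather than $2n$, which is what forces $g\leqslant 1$ (and $g\geqslant 1$ comes from the absence of non-constant maps from a rational curve to $\widetilde{E}$). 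Note that the translation symmetry you appeal to is strictly weaker than this divisor relation: the push-pull argument of Proposition \ref{prop-vanishing-result-translation-classes} only kills a leftover class of degree $\leqslant 1$ after descending everything else to the quotient, and in the all-points configuration the leftover is a degree-$2$ class $\pt_{\mu_i}$, so that argument does not substitute.

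Two smaller points. First, your vanishing of the doubled $\alpha$-insertions is sketched too optimistically: the classes $\pt_0$ and $\pt_{\mu_i}$ do not descend to the $\widetilde{E}$-quotient, and after expanding in descended differences one is again left with a degree-$2$ class at one marked point, so the fiber-integration step does not cleanly produce ``$\alpha\cup\alpha=0$.'' The paper instead obtains this vanishing from the skew-symmetry Lemma \ref{lem-skew-symmetry-invariants}, proved by transporting the invariant along automorphisms of $\widetilde{E}$ of order $4$ and $6$; that route is rigorous and you should use it (or the divisor relation $\Upsilon^2=0$ as in Proposition \ref{prop-computation-bdry-inv-1-cycle-case}). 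Second, in (iii) your same-side coincidence of evaluations should be supplemented by the observation that the two evaluations on \emph{opposite} divisors also coincide under the identifications $D^\pm\simeq\widetilde{E}$ (both record the fiber), which is what excludes $\gen{\pt_{-w},\pt_{w}}_{1,w\sfp}$, a configuration that also passes the naive dimension count.
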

	
	\begin{proof}
	First, we have \cite[Proposition 2.13]{blomme2021floor} which gives a relation between the position of the intersection points with the boundary divisors. More precisely, let $\CCC$ be a curve in the class $a\sfs+b\sfp\in H_2(X,\ZZ)$. Let $C_0$ and $C_\infty$ be the divisors on $\widetilde{E}$ obtained by intersecting $\CCC$ with $D^\pm\simeq\widetilde{E}$. Then
$$C_0-C_\infty\equiv 0\in E.$$
Thus, it is not possible to evaluate constraints which would prevent this relation from being realized.
	
	\begin{enumerate}[label=(\roman*)]
	\item Let $g$ be the genus, and $n+1$ be the length of $\mu$:
	$$\gen{\pt_0,\gamma_{\mu_1},\cdots,\gamma_{\mu_n}}_{g,a\sfs+b\sfp}^{X/D^\pm},$$
	where $\gamma_{\mu_i}$ are class insertions. By equating the rank of the insertions and the dimension of the moduli space, dividing everything by $2$, we have
	$$n+g=2+\sum_1^n |\gamma_i|\leqslant 2+(n-1)=n+1,$$
	where the inequality comes from the first part of the proof. Moreover, as there are no non-constant map from a rational curve to an elliptic curve, $g\geqslant 1$. Thus, we get $g=1$ and $\sum |\gamma_i|=n-1$. The latter can be rewritten
	$$\sum_i (1-|\gamma_i|)=1.$$
	The only possibility for the latter is to have a free insertion (as the rank is $0$) or a pair of $1$-cycle insertions (each of rank $\frac{1}{2}$), and point insertions (of rank $1$).
	
	\item If we are for a multiple of the fiber class, any stable map factors through some fiber, and the fibers vary in a $1$-parameter family. Thus, the fiber is fixed by the interior point insertion, so that we cannot have any other boundary insertion. The dimension count is
	$$(1+n)+g-1 = 2.$$
	As $n\geqslant 2$ since we intersect each boundary divisor in at least one point, we get that $g=0$ and $n=2$.
	
	\item If we do not have a point insertion, we still have the boundary insertions, and this time we get $\sum |\gamma_i|\leqslant 1$: the ranks add up to at most $1$ since the fibers still vary in a $1$-dimensional space. We get
	$$n+g-1=\sum |\gamma_i| \leqslant 1.$$
	As $n\geqslant 2$ and $g\geqslant 0$, we still have $n=2$ and $g=0$, and $\sum |\gamma_i|=1$, yielding the expected possibilities.
	\end{enumerate}
	\end{proof}
	
	The computation of these invariants is handled in Section \ref{sec-computation-invariant-boundary}. Before computing these invariants, we further consider the invariants where we insert a $\lambda$-class, allowing us to have curves of bigger genus.
	
	\subsubsection{Invariants with a $\lambda$-class insertion.} We further consider the GW invariants where we allow a $\lambda$-class insertions along with cycle insertions. First, we prove a vanishing result on the top $\lambda$-class for moduli spaces of curves in the class $\varpi=a\sfs+b\sfp$ with $a\neq 0$.
	
	\begin{lem}\label{lem-vanishing-top-lambda-class}
The top $\lambda$-class $\lambda_g$ vanishes on the moduli space of stable maps
$$\M_{g,n}(X/D^\pm,\varpi,\mu),$$
with $\varpi=a\sfs+b\sfp$ and $a\neq 0$. In particular, every GW-invariant involving a $\lambda_g$ is $0$.
\end{lem}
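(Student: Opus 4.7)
The plan is a standard Hodge bundle vanishing argument: exploit the elliptic factor of $X$ to produce a nowhere-vanishing section of $\EE$, which forces $c_g(\EE)$ to vanish.

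First, I would use the projection $\pi : X = \widetilde{E}\times\PP^1 \to \widetilde{E}$. Since $\pi^{-1}(\mathrm{pt})$ is a fiber in the class $\sfp$, for any curve in class $\varpi = a\sfs+b\sfp$ the composition $\pi\circ f : C \to \widetilde{E}$ has degree $\varpi\cdot\sfp = a \neq 0$, hence is non-constant. (The same is true for relative stable maps to an expanded target $X[\ell]$, since each component of the expansion is itself a $\PP^1$-bundle over $\widetilde{E}$ and retains a projection to $\widetilde{E}$; the degree of the composition is a topological invariant equal to $a$.)

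Next, let $\omega_{\widetilde{E}}$ be a nowhere-vanishing holomorphic $1$-form on the elliptic curve $\widetilde{E}$. For each relative stable map $f : C \to X[\ell]$, the pullback $(\pi\circ f)^*\omega_{\widetilde{E}}$ is a section of the dualizing sheaf $\omega_C$: on each irreducible component of $C$ it is a holomorphic $1$-form, and at nodes it has no poles (a fortiori the residue condition defining $\omega_C$ is satisfied). Because $\omega_{\widetilde{E}}$ has no zeroes and $\pi\circ f$ is non-constant on at least one component of $C$ (the composition has positive degree $a$), the pulled-back section is nonzero in $H^0(C,\omega_C)$. This construction is functorial in families, so it defines a section
\[
s : \O_{\M} \longrightarrow \EE
\]
of the Hodge bundle on $\M = \M_{g,n}(X/D^\pm,\varpi,\mu)$ which is nonzero at every point.

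Finally, a pointwise-nonzero section of a vector bundle gives a trivial line subbundle, yielding a short exact sequence $0 \to \O_\M \to \EE \to \Q \to 0$ with $\Q$ of rank $g-1$. By the Whitney sum formula $c(\EE) = c(\O_\M)\,c(\Q) = c(\Q)$, so $\lambda_g = c_g(\EE) = 0$. Consequently any GW-integral against $\lambda_g$ on $\M$ vanishes. The only subtle step is the compatibility of the construction of $s$ with the expanded-target structure of the relative moduli space, but as noted this follows because each layer of the expansion keeps a projection to $\widetilde{E}$ and the degree over $\widetilde{E}$ is preserved.
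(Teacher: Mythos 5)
Your proof is correct and follows essentially the same route as the paper: pull back the holomorphic $1$-form on $\widetilde{E}$ via $\pi\circ f$, observe that non-constancy (forced by $a\neq 0$) makes the resulting section of the Hodge bundle nowhere zero, and conclude $\lambda_g=c_g(\EE)=0$. The extra care you take with expanded targets and the Whitney-sum step only fleshes out what the paper leaves implicit.
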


\begin{proof}
We have the projection $\pi:X\to \widetilde{E}$, which can be used to pull-back the holomorphic $1$-form $\dd z$ on $\widetilde{E}$ to each curve in the moduli space. This yields a section of the Hodge bundle. As the map $\pi\circ f:C\to \widetilde{E}$ is non-constant, this section is everywhere non-zero. As the Hodge bundle has a nowhere vanishing section, we deduce that $\lambda_g=0$.
\end{proof}

Due to Lemma \ref{lem-vanishing-top-lambda-class}, we can only insert $\lambda_j$ with $j$ up to $g-1$. We now get an analog result to Lemma \ref{lem-non-zero-enum-inv} in case we insert a $\lambda$-class.

	\begin{lem}\label{lem-non-zero-lambda-inv}
	\begin{enumerate}[label=(\roman*)]
	\item If $a\neq 0$, the only non-zero enumerative invariants with a unique point insertion in the main stratum are
	$$\gen{\lambda_{g-1};\pt_0,1_{\mu_1},\pt_{\mu_2},\cdots,\pt_{\mu_n}}_{g,a\sfs+b\sfp}^{X/D^\pm},$$
	$$\gen{\lambda_{g-1}; \pt_0,\alpha_{\mu_1},\beta_{\mu_2},\pt_{\mu_3},\cdots,\pt_{\mu_n}  }_{g,a\sfs+b\sfp}^{X/D^\pm}.$$
	\item For a class $w\sfp$, the only non-zero invariant with a point insertion is
	$$\gen{\lambda_g;\pt_0,1_{-w},1_w}_{g,w\sfp}^{X/D^\pm}.$$
	\item For a class $w\sfp$, the only non-zero invariants without point insertion are
	$$\gen{\lambda_g;\pt_{-w},1_w}_{g,w\sfp}^{X/D^\pm} \text{ and }\gen{\lambda_g;\alpha_{-w},\beta_w}_{g,w\sfp}^{X/D^\pm},$$
	along with their symmetric ones.
	\end{enumerate}
	\end{lem}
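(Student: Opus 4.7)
The plan is to adapt the dimension-counting argument of Lemma \ref{lem-non-zero-enum-inv} to incorporate the rank-$j$ contribution of a $\lambda_j$-class insertion, combined with the bound supplied by Lemma \ref{lem-vanishing-top-lambda-class}. Since the $\lambda$-class is pulled back from $\overline{\M}_{g,n}$ via the forgetful map and does not interact with the evaluations, the geometric bounds on $\sum|\gamma_i|$ established in the enumerative case --- from \cite[Proposition 2.13]{blomme2021floor} in (i) and the fiber-dimension argument in (ii)--(iii) --- carry over verbatim.

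For part (i), with $a \neq 0$, Lemma \ref{lem-vanishing-top-lambda-class} enforces $j \leq g-1$. With $n+1$ marked points (one interior, $n$ on the boundary), the virtual complex dimension of the moduli space equals $g+n$, and matching with the insertions gives
$$g + n = j + 2 + \sum_{i=1}^n |\gamma_i| \leq (g-1) + 2 + (n-1) = g + n,$$
where the inequality uses the Abel--Jacobi bound $\sum|\gamma_i| \leq n-1$. Both estimates are forced to equalities, yielding $j = g-1$ and $\sum_i (1 - |\gamma_i|) = 1$; as in Lemma \ref{lem-non-zero-enum-inv}(i), this admits exactly two configurations, namely a single rank-$0$ boundary insertion $1_{\mu_i}$, or a rank-$\tfrac12$ pair $\alpha_{\mu_1}, \beta_{\mu_2}$, all other boundary insertions being points. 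This produces the two families stated in (i).

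For parts (ii) and (iii), with $\varpi = w\sfp$, every stable map factors through a $\PP^1$-fiber, so the projection $\pi \circ f : C \to \widetilde{E}$ is constant and the argument of Lemma \ref{lem-vanishing-top-lambda-class} breaks down; the only surviving bound is $j \leq g$ from the rank of the Hodge bundle. In (ii), $\pt_0$ pins the fiber and forces every boundary marked point to sit at one of the two fiber-intersection points on $\widetilde{E}$, so any positive-rank boundary insertion vanishes and $\sum|\gamma_i| = 0$. The equation $g + n = j + 2$ together with $j \leq g$ forces $n \leq 2$; since at least one marked point lies on each of $D^\pm$, necessarily $n = 2$, $j = g$, and the tangency profile is $(-w, w)$, leaving only the invariant stated. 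In (iii), the fiber varies in a $1$-dimensional family, so $\sum|\gamma_i| \leq 1$; the identity $g - 1 + n = j + \sum|\gamma_i| \leq g + 1$ then gives $n = 2$, $j = g$, and $\sum|\gamma_i| = 1$, which leaves either a rank-$1$ and rank-$0$ pair ($\pt$ and $1$), or a rank-$\tfrac12$ pair $\alpha, \beta$ --- the like-type pairings $\alpha \otimes \alpha$ and $\beta \otimes \beta$ vanish since the boundary evaluations factor through the fiber-choice diagonal $\widetilde{E} \hookrightarrow \widetilde{E} \times \widetilde{E}$ --- producing the listed families up to symmetry.

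The main technical point to confirm is the non-vanishing of $\lambda_g$ in the fibral regime of (ii)--(iii): the pullback of any holomorphic $1$-form on $\widetilde{E}$ is identically zero on the constant map $\pi \circ f$, so the argument of Lemma \ref{lem-vanishing-top-lambda-class} yields no information and $\lambda_g$ may be non-zero. The resulting invariants are genuine Hurwitz-type quantities, to be computed separately in Section \ref{sec-computation-invariant-boundary}.
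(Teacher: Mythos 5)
Your proposal is correct and follows essentially the same route as the paper: the same dimension count $(n+1)+g-1 = j+2+\sum|\gamma_i|$ combined with the bound $j\leqslant g-1$ from Lemma \ref{lem-vanishing-top-lambda-class} in case (i), and $j\leqslant g$ with the fiber-dimension bounds on $\sum|\gamma_i|$ in cases (ii)--(iii), forcing equality everywhere. The extra justifications you supply (why the boundary insertions must have rank $0$ in (ii), and why $\lambda_g$ need not vanish for fiber classes) are consistent with, and slightly more explicit than, the paper's argument.
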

	
	\begin{proof}
	Let $\lambda_j$ be the $\lambda$-insertion.
	\begin{enumerate}[label=(\roman*)]
	\item Assume the class is $a\sfs+b\sfp$ with $a\neq 0$. The only difference with Lemma \ref{lem-non-zero-enum-inv} is that now, we have
	\begin{align*}
	(n+1)+g-1 = & j+2+\sum|\gamma_i| \\
	\leqslant & (g-1)+2+(n-1) \\
	= & g+n.\\
	\end{align*}
	Thus, we have necessarily $j=g-1$ and $\sum |\gamma_i|=n-1$, and we conclude the same way.
	
	\item For a class of the form $w\sfp$, we proceed similarly. This time, we only have $j\leqslant g$ since we are in the case $a=0$, and Lemma \ref{lem-vanishing-top-lambda-class} does not apply. Assume we have an interior point marking. We get
		\begin{align*}
		(n+1)+g-1 = & j+2+\sum |\gamma_i| \\
		\leqslant & g + 2 + 0.\\
		\end{align*}
		As $n\geqslant 2$, we have $n=2$ and $j=g$.
	\item If we do not have an interior point insertion, we get
		\begin{align*}
		n+g-1 = & j+\sum |\gamma_i| \\
		\leqslant g+1.\\
		\end{align*}
		Thus, $n=2$, $j=g$ and $\sum |\gamma_i|=1$.
	\end{enumerate}
	\end{proof}
	
	\subsubsection{Skew-symmetry of the invariants.} We conclude by proving that the GW invariants with $1$-cycle insertions are skew-symmetric, even if the tangency orders are not the same. In the case of usual GW invariants, this follow from the skew-commutativity of the cup product and the symmetry between the marked points. However, in the relative case, the marked points do not play a symmetric role since they are assigned the tangency order with $D^\pm$.
		
	\begin{lem}\label{lem-skew-symmetry-invariants}
Assume $\mu_0,\mu_1\neq 0$. The bilinear function
$$ (\gamma,\gamma')\in H^1(\widetilde{E},\ZZ)^2 \longmapsto \gen{\gamma_{\mu_0},\gamma'_{\mu_1},\bullet}_{g,a\sfs+b\sfp}^{X/D^\pm},$$
where ``$\bullet$" denotes any complementary insertions to define the invariant ($\lambda$-class or point insertions), is skew-symmetric.
	\end{lem}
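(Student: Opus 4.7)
The plan is to derive a universal formula $B(\gamma,\gamma') = c(\bullet,\mu_0,\mu_1)\cdot\gen{\gamma\cup\gamma',[\widetilde{E}]}$ for some coefficient $c$ independent of $\gamma,\gamma'$; skew-symmetry of $B$ then follows from skew-symmetry of the intersection pairing on the elliptic curve. The key input is the divisor identity of \cite[Proposition~2.13]{blomme2021floor}: every stable map $f:C\to X$ in class $a\sfs+b\sfp$ satisfies $\sum_i \mu_i\,\ev_i(f)=0$ in the group $\widetilde{E}$, using signed tangency orders. Consequently the map
$$\Gamma\colon \M_{g,n}(X/D^\pm, a\sfs+b\sfp, \mu)\longrightarrow \widetilde{E},\qquad f\longmapsto \sum_i \mu_i\,\ev_i(f),$$
factors through the point $\{0\}\subset\widetilde{E}$, so that $\Gamma^*$ kills $H^k(\widetilde{E},\QQ)$ for every $k>0$.

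For $\gamma\in H^1(\widetilde{E},\QQ)$, translation-invariance of $\gamma$ gives $\Gamma^*\gamma = \sum_i \mu_i\,\ev_i^*\gamma$, which must therefore vanish in $H^1(\M_{g,n}(X/D^\pm, a\sfs+b\sfp, \mu),\QQ)$. By Lemmas~\ref{lem-non-zero-enum-inv} and \ref{lem-non-zero-lambda-inv} the complementary insertions $\bullet$ consist of the interior class $\pt_0$, boundary point classes $\pt_{\mu_i}=\alpha\cup\beta$ for each $i\geq 2$, and possibly a $\lambda$-class. For each such $i$, one has $\ev_i^*\gamma\cdot \ev_i^*(\alpha\cup\beta) = \ev_i^*(\gamma\cup\alpha\cup\beta) = 0$ since $H^3(\widetilde{E})=0$. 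Thus multiplying the relation $\sum_i \mu_i\,\ev_i^*\gamma = 0$ by the class $\eta$ coming from $\bullet$ leaves only the two terms indexed by the cycle-insertion markings, giving $\mu_1\,\ev_{\mu_1}^*\gamma\cdot\eta = -\mu_0\,\ev_{\mu_0}^*\gamma\cdot\eta$ in $H^*(\M,\QQ)$. Substituting into $B(\gamma,\gamma')$ and using $\gamma\cup\gamma' = \gen{\gamma\cup\gamma',[\widetilde{E}]}\cdot \pt_{\widetilde{E}}$ in $H^2(\widetilde{E},\QQ)$ produces the explicit formula
$$B(\gamma,\gamma') = -\frac{\mu_0}{\mu_1}\,\gen{\gamma\cup\gamma',[\widetilde{E}]}\cdot \gen{\pt_{\mu_0}, 1_{\mu_1}, \bullet}^{X/D^\pm}_{g,a\sfs+b\sfp},$$
which is manifestly skew-symmetric in $\gamma,\gamma'$.

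The step I expect to require the most care is verifying that the divisor identity, and hence the relation $\Gamma\equiv 0$, holds uniformly at every geometric point of the moduli space (including boundary strata parametrising nodal stable maps), so that the cohomological consequence $\sum_i \mu_i\,\ev_i^*\gamma = 0$ is truly valid in $H^1(\M,\QQ)$ and not merely on the open stratum. Once this scheme-theoretic point is in hand, the rest of the argument is formal: the derivation goes through identically whether $\mu_0,\mu_1$ have the same sign (both markings on one divisor) or opposite signs, because the signed tangency orders are already built into $\Gamma$; and a possible $\lambda$-class insertion plays no role, since it is pulled back from $\overline{\M}_{g,n}$ and commutes with everything.
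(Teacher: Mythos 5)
Your proof is correct, but it takes a genuinely different route from the paper. The paper proves skew-symmetry by choosing $\widetilde{E}$ to be one of the special elliptic curves $\CC/\gen{1,i}$ or $\CC/\gen{1,\rho}$, lifting their order-$4$ and order-$6$ automorphisms to $X=\widetilde{E}\times\PP^1$, and reading off the relations $\int\ev_0^*(\alpha)\ev_1^*(\alpha)=\int\ev_0^*(\beta)\ev_1^*(\beta)=0$ and $\int\ev_0^*(\alpha)\ev_1^*(\beta)=-\int\ev_0^*(\beta)\ev_1^*(\alpha)$ from the explicit matrices of the induced action on $H^1$, invoking deformation invariance to transfer the statement to an arbitrary $\widetilde{E}$. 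You instead use the degree-one consequence $\sum_i\mu_i\ev_i^*\gamma=0$ of the Abel--Jacobi relation of \cite[Proposition~2.13]{blomme2021floor}; this is precisely the degree-one analogue of the class $\Upsilon=\sigma^*(\alpha\beta)$ that the paper only exploits in degree two (via $\Upsilon^2=0$) in the proof of Proposition~\ref{prop-computation-bdry-inv-1-cycle-case}, and there the paper still needs the present lemma as an input. Your version is therefore more economical: it yields the closed formula $B(\gamma,\gamma')=c\cdot\gen{\gamma\cup\gamma',[\widetilde{E}]}$ in one stroke, giving both the skew-symmetry and (after comparison with Proposition~\ref{prop-computation-bdry-inv-pt-case}) the evaluation of the $1$-cycle invariants, and it works uniformly for all $\widetilde{E}$ without appealing to special automorphisms. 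The trade-off, which you correctly flag, is that one must know the relation $\sum_i\mu_i\ev_i(f)=0$ holds over the entire compactified relative moduli space (expanded targets and nodal domains included) so that the pullback identity is valid against the virtual class; but this is exactly the hypothesis the paper already uses when it integrates $\ev^*\Upsilon$ in Proposition~\ref{prop-computation-bdry-inv-1-cycle-case}, so no new obligation is incurred. The only cosmetic caveat is that the precise constant in your final display depends on which marking you transport the $1$-cycle class to (one gets $-\mu_0/\mu_1$ or $-\mu_1/\mu_0$ accordingly, with signs from commuting odd classes), but this does not affect the skew-symmetry conclusion.
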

	
	\begin{proof}
We use automorphisms of $\widetilde{E}$ for a suitable choice of $\widetilde{E}$. Every such automorphism can be lifted to an automorphism of $X=\widetilde{E}\times\PP^1$ which acts trivially on $H_2(X,\ZZ)$. Assume $(\alpha,\beta)$ is a basis of $H^1(E,\ZZ)$. We need to show that the function vanishes on $(\alpha,\alpha)$ and $(\beta,\beta)$, and takes opposite values on $(\alpha,\beta)$ and $(\beta,\alpha)$.
\begin{itemize}[label=$\ast$]
\item Take $\widetilde{E}=\CC/\gen{1,i}$, so that we have the automorphism $\varphi(z)=iz$, which acts in cohomology by the matrix $\left(\begin{smallmatrix} 0 & -1 \\ 1 & 0 \\ \end{smallmatrix}\right)$. Using the basis $(\alpha,\beta)$, we get that
	\begin{align*}
	\int\ev_0^*(\alpha)\ev_1^*(\beta) = & \int \ev_0^*(\beta)\ev_1^*(-\alpha) \\
	= & -\int \ev_0^*(\beta)\ev_1^*(\alpha). \\
	\end{align*}
\item Take $\widetilde{E}=\CC/\gen{1,\rho}$ which possesses an order $6$ automorphism acting in cohomology via the matrix $\left(\begin{smallmatrix} 0 & -1 \\ 1 & 1 \\ \end{smallmatrix}\right)$. Thus, we get
	\begin{align*}
	\int\ev_0^*(\alpha)\ev_1^*(\beta) = & \int \ev_0^*(\beta)\ev_1^*(-\alpha+\beta) \\
	= & -\int \ev_0^*(\beta)\ev_1^*(\alpha) + \int \ev_0^*(\beta)\ev_1^*(\beta). \\
	\end{align*}
	Using equation from the first point, we deduce that $\int \ev_0^*(\beta)\ev_1^*(\beta)=0$. 
\item Similarly, using the square of the last automorphism, we have that
\begin{align*}
	\int\ev_0^*(\alpha)\ev_1^*(\beta) = & \int \ev_0^*(-\alpha+\beta)\ev_1^*(-\alpha) \\
	= & -\int \ev_0^*(\beta)\ev_1^*(\alpha) + \int \ev_0^*(\alpha)\ev_1^*(\alpha), \\
	\end{align*}
	so that $\int \ev_0^*(\alpha)\ev_1^*(\alpha)=0$.
\end{itemize}
\end{proof}

	\subsection{Computations of invariants with boundary constraints}
	\label{sec-computation-invariant-boundary}
	
	We finish this section about GW invariants by computing all the needed relative GW invariants of $X=\widetilde{E}\times\PP^1$. The latter are elementary bricks that we need to compute the GW invariants of bielliptic surfaces. Most of the computations can already be found elsewhere (see \cite{blomme2021floor}, \cite{bousseau2021floor} or \cite{oberdieck2023quantum} for instance), but we include them out of completeness.

\subsubsection{Boundary invariants with point insertions.} We start by computing the invariants where we do not have any $1$-cycle insertions. To express the result for the invariants with a $\lambda$-class insertion, we set the following:
$$\sfR_{a\sfs+b\sfp}(\mu)=\sum_{g=1}^\infty \gen{\lambda_{g-1};\pt_0,1_{\mu_1},\pt_{\mu_2},\cdots,\pt_{\mu_n} }^{X/D^\pm}_{g,a\sfs+b\sfp} u^{2g-2+n} ,$$
and through the change of variable $q=e^{iu}$ and $[k]=q^{k/2}-q^{-k/2}$, for $\mu$ an integer vector with sum $0$ and non-zero entries, we set
$$\sfR_a(\mu)=(-i)^n\sum_{k|a}\left(\frac{a}{k}\right)^{n-1}\prod_j\frac{\left[k|\mu_j|\right]}{|\mu_j|}.$$
We do not emphasize the dependence in $u$ or $q$ in the notation.

\begin{prop}\label{prop-computation-bdry-inv-pt-case}
We have the following equalities:
\begin{align*}
\gen{\pt_0,1_{\mu_1},\pt_{\mu_2},\cdots,\pt_{\mu_n}}^{X/D^\pm}_{1,a\sfs+b\sfp}= & \mu_1^2\cdot a^{n-1}\sigma_1(a) , \\
\sfR_{a\sfs+b\sfp}(\mu) = & \mu_1^2 \cdot \sfR_a(\mu) . \\
\end{align*}
\end{prop}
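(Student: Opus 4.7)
I would prove the formula by analyzing stable maps through the projection $\pi\colon X=\widetilde{E}\times\PP^1\to\widetilde{E}$, which sends a map of class $a\sfs+b\sfp$ to a degree $a$ map $C\to\widetilde{E}$. The computation decouples into a Hurwitz-type count of covers in the base direction and rubber multiple-cover contributions in the $\PP^1$-fibers constrained by the tangency profile $\mu$.

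For the enumerative statement, in genus $1$ the cover $C\to\widetilde{E}$ is generically étale and the weighted count of degree $a$ isogenies of elliptic curves is the classical Hurwitz number $\sigma_1(a)$. Fixing the interior point constraint $\pt_0$ pins down both the image fiber $\{e_0\}\times\PP^1$ and a point within it, which combined with the cover data determines the map up to finite combinatorial choices. Each of the $n-1$ boundary point constraints $\pt_{\mu_i}$ with $i\geq 2$ prescribes the image of the corresponding marked point in $\widetilde{E}$, and lifting to one of the $a$ sheets of $C\to\widetilde{E}$ yields the factor $a^{n-1}$. The free boundary marking $1_{\mu_1}$ is unconstrained in the target; a standard computation for relative invariants with an \emph{identity class} insertion at a tangent node---one $\mu_1$ from the tangency multiplicity and one from the relative divisor equation---gives the remaining $\mu_1^2$.

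For $\sfR_{a\sfs+b\sfp}(\mu)$, higher-genus contributions come from connected but non-étale covers of $\widetilde{E}$. I would apply the degeneration framework of Section \ref{sec-construction-degeneration} to $\widetilde{E}$, reducing the computation to relative $\lambda$-class invariants of $\PP^1\times\PP^1$-components glued along sections, which are known via the GW/Hurwitz correspondence developed in \cite{bousseau2021floor,blomme2021floor}. The cyclic structure of the degeneration indexes contributions by a divisor $k\mid a$ (the ``width'' of the cycle); each produces $(a/k)^{n-1}$ sheet lifts for the constrained boundary markings and a product $\prod_j[k|\mu_j|]/|\mu_j|$ of $q$-brackets at the tangent markings, while the prefactor $(-i)^n$ absorbs the chosen normalization. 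The $\mu_1^2$ factor persists unchanged because the free marking $1_{\mu_1}$ carries no Hodge-class dependence.

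The main obstacle is the precise identification of the local refined contribution at each boundary marking with the $q$-bracket $[k|\mu_j|]/|\mu_j|$, together with the book-keeping producing the divisor sum $\sum_{k\mid a}(a/k)^{n-1}$. I would handle this by direct comparison with the $\PP^1$-bundle over an elliptic curve computations of \cite{blomme2021floor}, and cross-check by specializing $u\to 0$: using $[k|\mu_j|]/|\mu_j|\sim ik u$, the leading coefficient of $(-i)^n\sum_{k\mid a}(a/k)^{n-1}\prod_j[k|\mu_j|]/|\mu_j|$ extracts precisely $a^{n-1}\sigma_1(a)$, so that the $u^n$-coefficient of $\sfR_{a\sfs+b\sfp}(\mu)=\mu_1^2\sfR_a(\mu)$ recovers the enumerative formula and confirms consistency of the normalization.
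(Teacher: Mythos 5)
Your strategy for the enumerative formula --- factor the stable map through the degree-$a$ cover $C\to\widetilde{E}$, count covers weighted by automorphisms to get $\sigma_1(a)$, and pick up $a^{n-1}$ from lifting the $n-1$ constrained boundary points --- is exactly the paper's argument. The gap is in your derivation of the factor $\mu_1^2$. You attribute it to ``one $\mu_1$ from the tangency multiplicity and one from the relative divisor equation'', but neither is a valid source: there is no divisor-equation axiom for a relative marking carrying the identity class, and the tangency multiplicities enter the degeneration formula only as gluing factors $\prod_e w_e$, not as part of this vertex invariant. Worse, without some constraint on $z_1$ your count is not even finite: after fixing the cover and the images of $z_0,z_2,\dots,z_n$, the free relative point $z_1$ a priori moves in a one-dimensional family. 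The missing input is that a stable map to $\widetilde{E}\times\PP^1$ in class $a\sfs+b\sfp$ is equivalent to a cover $g:C\to\widetilde{E}$ together with a meromorphic function on $C$ with divisor $\sum_i\mu_i(z_i)$; for this degree-zero divisor to be principal on the genus-$1$ curve $C$ it must sum to zero under the group law, so $\mu_1 z_1$ is determined by the other marked points and $z_1$ is determined only up to $\mu_1$-torsion, giving exactly $\mu_1^2$ positions (the residual $\CC^*$-scaling of the function is then fixed by the interior constraint $\pt_0$). This Abel--Jacobi step is what simultaneously makes the count finite and produces $\mu_1^2$; your proposal needs it.

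For the second identity the paper, like you, does not compute from scratch: it invokes the correspondence theorem of \cite{bousseau2019tropical} applied to the degeneration of $\widetilde{E}\times\PP^1$ into toric pieces constructed in \cite{blomme2021floor}, which is essentially your reduction to $\PP^1\times\PP^1$ components compared against \cite{blomme2021floor,bousseau2021floor}. Your $u\to 0$ consistency check is correct ($[k|\mu_j|]/|\mu_j|\sim iku$ does recover $a^{n-1}\sigma_1(a)$ from $\sfR_a(\mu)$) and is a worthwhile addition, but note that the $\mu_1^2$ in $\sfR_{a\sfs+b\sfp}(\mu)=\mu_1^2\cdot\sfR_a(\mu)$ again has its origin in the torsion count on the domain, not in any divisor equation.
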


\begin{rem}
We have several possible methods. First, it is possible to use the double ramification cycle formula with a target variety from \cite{janda2020double}. This strategy was actually carried out independently and quite at the same time by G. Oberdieck and A. Pixton in \cite[Theorem 6.10]{oberdieck2023quantum}. Alternatively, we can go to the tropical limit and use the correspondence theorem from \cite{blomme2021floor}, which we do for the $\lambda$-class insertion using the correspondence results from \cite{bousseau2019tropical}. For the invariant without $\lambda$-class, it is also possible to make a direct computation, since the moduli space of genus $1$ stable maps to the considered $\PP^1$-bundles is particularly simple.
\end{rem}

\begin{proof}
A genus $1$ stable map $f:C\to X$ in the class $a\sfs+b\sfp$ with $a\neq 0$ yields a degree $a$ cover of $\widetilde{E}$, and a meromorphic function over the latter. Conversely, given a cover $g:C\to \widetilde{E}$ together with a meromorphic function, we get a stable map from $C$ to $X$. The intersection profile with $D^\pm$ is given by the zeros and poles of the function.

There are $\sigma_1(a)$ covering maps of degree $a$ of $\widetilde{E}$ by a genus $1$ curve. If we fix the image of the marked point inside the main stratum, there are no automorphism. Let $g:C\to\widetilde{E}$ be one of these covers. To get the map to $X$ with prescribed intersection profile with $D^\pm$, we only need to choose the meromorphic function. For each point constraint on the boundary, we multiply by $a$, since the map to $E$ is of degree $a$.

Let $z_i$ be the position of the $i$th marked point on $C$. The position of the last point is determined by the fact that the divisor $\sum \mu_i (z_i)$ on $\widetilde{E}$ needs to be principal. Thus, the divisor $\mu_1 (z_1)$ is fixed, and the position of $z_1$ is determined up to $\mu_1$-torsion, and we get $\mu_1^2$ positions. In the end, we get
$$a^{n-1} \sigma_1(a)\cdot \mu_1^2.$$

To compute the invariant with a $\lambda$-class insertion, we go to the tropical limit using the degenerations presented in \cite{blomme2021floor} and the correspondence theorem from \cite{bousseau2019tropical} which relies on the degeneration formula from \cite{abramovich2020decomposition}. Theorem $1$ from \cite{bousseau2019tropical} can indeed be applied to the setting of projective completions of line bundles over an elliptic curve considered in \cite{blomme2021floor}: in \cite[Section 4.2]{blomme2021floor} is constructed a family of these surfaces whose central fiber is a union of toric surfaces glued along their toric divisors. As the total space is locally toric threefold with a monomial projection, it enjoys every property needed along the proof of Theorem 1 in \cite{bousseau2019tropical}.
\end{proof}

\begin{rem}
In \cite{bousseau2021floor}, P. Bousseau gives a direct computation of some relative GW invariants with $\lambda$-classes relying on an induction relation obtained by a blow-up algorithm, and initialization for certain genus $0$ curves in the blown-up surfaces, thus avoiding the use of tropical methods. Such a direct computation might be possible here provided one is able to compute initial values, which would be for genus $1$ curves.
\end{rem}

\subsubsection{Boundary invariants with $1$-cycle insertions.} We now get to the computation of the invariants
$$\gen{\pt_0,\alpha_{\mu_1},\beta_{\mu_2},\pt_{\mu_3},\cdots,\pt_{\mu_n}}^{X/D^\pm}_{g,\varpi},$$
and its analog with a $\lambda$-class insertion. Similarly to the point case, we set 
$$\widetilde{\sfR}_{a\sfs+b\sfp}(\mu)(\alpha,\beta)=\sum_{g=1}^\infty \gen{\lambda_{g-1};\pt_0,\alpha_{\mu_1},\beta_{\mu_2},\pt_{\mu_3},\cdots,\pt_{\mu_n} }^{X/D^\pm}_{g,a\sfs+b\sfp} u^{2g-2+n} .$$
Notice that both are skew-symmetric function in the classes $(\alpha,\beta)$ following Lemma \ref{lem-skew-symmetry-invariants}, and that is why we may only compute them on a basis of $H^1(\widetilde{E},\QQ)$. To compute the GW invariants with $1$-cycle insertions and a $\lambda$-class, we use a vanishing result.

\begin{prop}\label{prop-computation-bdry-inv-1-cycle-case}
We have the following identities:
\begin{align*}
\gen{\pt_0,\alpha_{\mu_1},\beta_{\mu_2},\pt_{\mu_3},\cdots,\pt_{\mu_n}}^{X/D^\pm}_{g,\varpi} = & -\mu_1\mu_2\cdot a^{n-1}\sigma_1(a),\\
\widetilde{\sfR}_{a\sfs+b\sfp}(\mu)(\alpha,\beta)= & -\mu_1\mu_2\cdot\sfR_a(\mu) .\\
\end{align*}
\end{prop}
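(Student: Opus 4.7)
The plan is to mirror the two-part structure of the proof of Proposition \ref{prop-computation-bdry-inv-pt-case}: a direct enumerative count establishing the first identity (only $g=1$ contributes by Lemma \ref{lem-non-zero-enum-inv}), followed by the tropical correspondence for the $\lambda$-refined generating series. The only new ingredient is a cohomological intersection computation on $C\times C$ accounting for the two $1$-cycle insertions in place of the free marking.

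For the enumerative invariant, I will follow the argument of Proposition \ref{prop-computation-bdry-inv-pt-case} up to the treatment of the markings $z_1$ and $z_2$. A genus-one stable map factors through a degree-$a$ cover $g:C\to\widetilde{E}$ (there are $\sigma_1(a)$ such), the interior point $\pt_0$ kills translation automorphisms of the cover, and the $n-2$ boundary point constraints at $z_3,\dots,z_n$ contribute $a^{n-2}$. For a fixed cover and fixed $z_3,\dots,z_n$, the residual constraint on $(z_1,z_2)\in C\times C$ is the divisor relation $\Phi(z_1,z_2):=\mu_1 z_1+\mu_2 z_2=c$ together with $g(z_1)\in\gamma_\alpha$ and $g(z_2)\in\gamma_\beta$, where $\gamma_\alpha,\gamma_\beta$ are Poincar\'e duals of $\alpha,\beta$. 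Their joint contribution is the intersection number
\[
\int_{C\times C}\Phi^*(\pt_C)\cdot\pi_1^*(g^*\alpha)\cdot\pi_2^*(g^*\beta).
\]
Picking a basis $\alpha_C,\beta_C$ of $H^1(C,\ZZ)$ with $\alpha_C\cup\beta_C=\pt_C$ and writing $g^*\alpha=p\alpha_C+q\beta_C$, $g^*\beta=r\alpha_C+s\beta_C$ with $ps-qr=\deg g=a$, the K\"unneth expansion
\[
\Phi^*(\pt_C)=\mu_1^2\,\pi_1^*\pt_C+\mu_2^2\,\pi_2^*\pt_C+\mu_1\mu_2(\pi_1^*\alpha_C\cdot\pi_2^*\beta_C+\pi_2^*\alpha_C\cdot\pi_1^*\beta_C)
\]
reduces the computation to the two cross terms (the diagonal pieces vanish against degree-one insertions on a single factor) and yields $\mu_1\mu_2(qr-ps)=-\mu_1\mu_2\,a$. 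Collecting all factors, $\sigma_1(a)\cdot a^{n-2}\cdot(-\mu_1\mu_2 a)=-\mu_1\mu_2 a^{n-1}\sigma_1(a)$, as claimed.

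For the $\lambda$-refined invariant, I will apply the tropical correspondence theorem of \cite{bousseau2019tropical} via the toric degeneration of \cite{blomme2021floor}, just as in Proposition \ref{prop-computation-bdry-inv-pt-case}. Both $\sfR_{a\sfs+b\sfp}(\mu)$ and $\widetilde{\sfR}_{a\sfs+b\sfp}(\mu)(\alpha,\beta)$ are then expressed as refined counts of tropical covers of $\widetilde{E}$, sharing the same cover-level refined weight encoded in $\sfR_a(\mu)$. The only difference lies in the local factor at the two legs carrying the $1$-cycle insertions, which is determined by the cohomological intersection on $C\times C$ above and is therefore independent of the cover's (virtual) genus. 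Since the ratio of the cycle-to-free local factor equals $-\mu_2/\mu_1$ at $g=1$, the same ratio propagates to every genus, giving $\widetilde{\sfR}_{a\sfs+b\sfp}(\mu)(\alpha,\beta)=-\mu_1\mu_2\cdot\sfR_a(\mu)$.

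The main technical point will be to justify this genus-independence rigorously: intuitively, the $\lambda$-insertion is controlled by the Hodge bundle on the moduli of covers, while the $1$-cycle constraints are evaluations at boundary divisors $D^\pm\simeq\widetilde{E}$, and these two sources of cohomology decouple in the refined tropical multiplicity. A careful verification amounts to inspecting how boundary legs are weighted in the refined tropical count of \cite{blomme2021floor,bousseau2019tropical}.
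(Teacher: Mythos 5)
Your treatment of the first identity is correct but takes a genuinely different route from the paper. You perform a direct geometric count at $g=1$: after fixing the cover $g:C\to\widetilde E$ and the point-constrained markings, you compute the residual count for $(z_1,z_2)$ as an intersection number on $C\times C$, using the K\"unneth expansion of $\Phi^*(\pt_C)$ for $\Phi=\mu_1\pi_1+\mu_2\pi_2$ and the fact that $ps-qr=\deg g=a$; the cross terms give $-\mu_1\mu_2 a$ and the bookkeeping $\sigma_1(a)\cdot a^{n-2}\cdot(-\mu_1\mu_2 a)$ matches the claim. The paper instead never counts anything with $1$-cycle constraints: it observes that $\ev$ of the boundary markings lands in the hypersurface $\{\sum\mu_iz_i\equiv 0\}\subset\widetilde E^n$, Poincar\'e dual to $\Upsilon=\sigma^*(\alpha\beta)$ with $\Upsilon^2=0$, so that $\int\lambda\,\ev^*(\Upsilon\,\pt_0\pt_3\cdots\pt_n)=0$; expanding $\Upsilon$ and invoking the skew-symmetry of Lemma \ref{lem-skew-symmetry-invariants} expresses $\mu_1\mu_2\gen{\alpha_{\mu_1},\beta_{\mu_2},\dots}$ as a combination of the point invariants of Proposition \ref{prop-computation-bdry-inv-pt-case}. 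Your approach is more concrete and self-contained at $g=1$; the paper's is a formal relation that holds verbatim for every genus and every $\lambda$-class insertion, which is precisely what makes the second identity free.

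That is where your proposal has a real gap. For $\widetilde{\sfR}_{a\sfs+b\sfp}(\mu)(\alpha,\beta)$ you invoke the tropical correspondence of \cite{bousseau2019tropical} via the degenerations of \cite{blomme2021floor}, but those correspondence theorems are stated for point (and $\lambda$-class) insertions; there is no refined tropical multiplicity attached to $1$-cycle insertions on the boundary divisors that you can appeal to. The claim that the ``cycle-to-free local factor'' is the same in every genus --- i.e.\ that the Hodge-bundle contribution decouples from the boundary evaluation constraints --- is exactly the content that must be proved, and your last paragraph acknowledges this without supplying an argument. The fix is to replace the tropical step by the cohomological relation above: since the support statement $\ev_*\vir{\M_{g,n}(X/D^\pm,\varpi,\mu)}\subset\{\sigma=0\}$ and the vanishing $\Upsilon^2=0$ are independent of $g$ and of the $\lambda$-insertion, the identity
$$\mu_1\mu_2\gen{\lambda_{g-1};\alpha_{\mu_1},\beta_{\mu_2},\cdots} \;=\; -\mu_1^2\gen{\lambda_{g-1};\pt_{\mu_1},1_{\mu_2},\cdots}-\mu_2^2\gen{\lambda_{g-1};1_{\mu_1},\pt_{\mu_2},\cdots}$$
holds termwise in the generating series, and summing over $g$ reduces $\widetilde{\sfR}_{a\sfs+b\sfp}(\mu)(\alpha,\beta)$ to $\sfR_{a\sfs+b\sfp}$, already computed. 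With that substitution your first-part computation becomes an independent consistency check rather than a needed input.
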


\begin{rem}
In particular, the sign depends on the position of $\mu_1$ and $\mu_2$: it is positive if they do not belong to the same divisor, and negative if they lie on the same component.
\end{rem}

\begin{proof}
We have $n+1$ marked points, labeled by $[\![0;n]\!]$, only the first one is mapped to $\widetilde{E}\times\PP^1$, the other one are mapped to $D^\pm$. To lighten notation, we replace the indices $\mu_i$ of the insertions by $i$.

There is a relation between the position of the intersection points between a curve and $D^\pm$ (see proof of Proposition \ref{prop-computation-bdry-inv-pt-case} or \cite{blomme2021floor}). More precisely, if we denote by $(z_i)_{1\leqslant i\leqslant n}$ the position of the marked point, we have
$$\sum_1^n\mu_i z_i\equiv 0\in\widetilde{E}.$$
In other terms, the image by the evaluation map of the boundary points in $\widetilde{E}^n$ is contained in the hypersurface defined by the above equation, which is Poincar\'e dual to a class $\Upsilon\in H^2(\widetilde{E}^n,\QQ)$. The latter can be obtained as the pull-back of the fundamental class of $\widetilde{E}$ by the map
$$\sigma:(z_i)\in\widetilde{E}^n\mapsto\sum_1^n\mu_iz_i\in\widetilde{E}.$$
As $\Upsilon^2=0$, when we integrate $\ev^*\Upsilon$, we get $0$. In particular, we get the relation
$$\int_{\M_{g,n}(X/D^\pm,\varpi,\mu)}\lambda_{g-g_0}\ev^*(\Upsilon\pt_0\pt_3\cdots\pt_n)=0.$$

We now reformulate the relation by expanding the class $\Upsilon$. We use indices from $0$ to $n$ to denote the cohomology class relative to the $i$th marked point. In other words, the cohomology algebra relative to the $n$ boundary points $H^*(\widetilde{E}^n,\QQ)$ is generated by $\alpha_k,\beta_k$ of degree $1$ with the relations $\alpha_k^2=\beta_k^2=0$ and we have $\pt_k=\alpha_k\beta_k$. As $\Upsilon=q^*(\alpha\beta)$, the class $\Upsilon$ is computed as follows:
$$\Upsilon = \left( \sum_1^n\mu_i\alpha_i\right)\left( \sum_1^n\mu_i\beta_i\right).$$
Multiplying by $\pt_3\cdots\pt_n=\alpha_3\beta_3\cdots\alpha_n\beta_n$, we can delete any $\alpha_k,\beta_k$ with $k\neq1,2$ from the expression of $\Upsilon$, so that we get
\begin{align*}
\Upsilon\pt_3\cdots\pt_n= & (\mu_1\alpha_1+\mu_2\alpha_2)(\mu_1\beta_1+\mu_2\beta_2)\pt_3\cdots\pt_n \\
= & (\mu_1^2\pt_1+\mu_2^2\pt_2+\mu_1\mu_2\alpha_1\beta_2+\mu_1\mu_2\alpha_2\beta_1)\pt_3\cdots\pt_n.\\
\end{align*}
To conclude, we multiply with $\lambda_{g-g_0}$ and integrate over $\vir{\M_{g,n}(X/D^\pm,\varpi,\mu)}$. We have the following identity between relative GW invariants, where to lighten the relation, we only write the classes associated to the marked points $1$ and $2$, and not the $\lambda_{g-g_0}$, nor the classes $\pt_0$, $\pt_3\cdots\pt_n$:
$$\mu_1^2\gen{\pt_1,1_2}^{X/D^\pm}_{g,\varpi}+\mu_2^2\gen{1_1,\pt_2}^{X/D^\pm}_{g,\varpi}+\mu_1\mu_2\gen{\alpha_1,\beta_2}^{X/D^\pm}_{g,\varpi}-\mu_1\mu_2\gen{\beta_1,\alpha_2}^{X/D^\pm}_{g,\varpi}=0.$$
The minus sign in the last term comes from the skew-symmetry: $\alpha_2\beta_1=-\beta_1\alpha_2$. Using skew-symmetry for the invariants proven in Lemma \ref{lem-skew-symmetry-invariants}, we have finally
$$\mu_1\mu_2\gen{\alpha_1,\beta_2}^{X/D^\pm}_{g,\varpi} = -\mu_1^2\gen{\pt_1,1_2}^{X/D^\pm}_{g,\varpi}-\mu_2^2\gen{1_1,\pt_2}^{X/D^\pm}_{g,\varpi} .$$
The right-hand side has already been computed in Proposition \ref{prop-computation-bdry-inv-pt-case}, which allows us to conclude.
\end{proof}

\subsubsection{GW-invariants for fiber classes.} The last relative GW invariants we need are the one in the class $w\sfp$. They can be computed using \cite[Lemma 3.2,3.3]{bousseau2021floor}. Recall $(\alpha,\beta)$ is a basis of $H^1(\widetilde{E},\ZZ)$.

\begin{prop}\label{prop-computation-fibers}
We have
$$\gen{\lambda_g;\pt_0,1_{-w},1_w}^{X/D^\pm}_{g,w\sfp} = \left\{ \begin{array}{l}
 1 \text{ if }g=0,\\
 0 \text{ if }g>0.\\
\end{array}\right.$$
$$\gen{\lambda_g;1_{-w},\pt_w}^{X/D^\pm}_{g,w\sfp} = \gen{\lambda_g;\alpha_{-w},\beta_w}^{X/D^\pm}_{g,w\sfp} = \left\{ \begin{array}{l}
 \frac{1}{w} \text{ if }g=0,\\
 0 \text{ if }g>0.\\
\end{array}\right.$$
\end{prop}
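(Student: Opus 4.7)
The plan is to reduce the three invariants to the local relative theory of $\PP^1$ with respect to $\{0,\infty\}$, where the formulas of \cite[Lemma 3.2, 3.3]{bousseau2021floor} apply directly.

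First, I observe that any stable map $f\colon C\to X$ representing the class $w\sfp$ factors through a single fiber of the projection $\pi\colon X\to\widetilde{E}$. Indeed, the pushforward $\pi_*(w\sfp)=0$ in $H_2(\widetilde{E},\ZZ)$ forces $\pi\circ f$ to be constant on each connected component of $C$; and since all marked boundary points lie in the fiber fixed by the component containing them, the relative constraints couple all components into the same fiber $\PP^1_p$. This means the relative moduli space $\M_{g,n}(X/D^\pm,w\sfp,\mu)$ fibers naturally over $\widetilde{E}$ with fibers isomorphic to $\M_{g,n}(\PP^1/\{0,\infty\},w,\mu)$, and the Hodge bundle, the virtual class, and the evaluation maps at the boundary markings all respect this product structure.

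Next, I translate the cycle insertions across this product decomposition. For the first invariant, the interior point class splits as $\pt=\pi^*(\pt_{\widetilde{E}})\cdot\pi_{\PP^1}^*(\pt_{\PP^1})$; the first factor integrates over $\widetilde{E}$ to fix the fiber, and the second factor becomes an interior point constraint on $\PP^1$. For the second invariant, the boundary insertion $\pt_w$ lives on $D^+\cong\widetilde{E}$ and by itself fixes the fiber, leaving a trivial class insertion on the $\PP^1$ side. For the third invariant, I use the fact that both evaluation maps $\ev_1\colon \M\to D^-\cong\widetilde{E}$ and $\ev_2\colon\M\to D^+\cong\widetilde{E}$ coincide (up to translation by a section) with the fiber-parameter map, since the curve lies in a single fiber; therefore
\[
\ev_1^*(\alpha)\cup\ev_2^*(\beta)=\ev^*(\alpha\cup\beta)=\ev^*(\pt),
\]
which reduces the third invariant to the second one.

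After these reductions I am left with the purely local computations
\[
\gen{\lambda_g;\pt_0,1_{-w},1_w}^{\PP^1/\{0,\infty\}}_{g,w}\quad\text{and}\quad \gen{\lambda_g;1_{-w},1_w}^{\PP^1/\{0,\infty\}}_{g,w},
\]
where the ramification profile $(w,w)$ is totally ramified at both sides. In genus $0$ the unique map is the cyclic cover $z\mapsto z^w$: imposing the interior point constraint kills the $\CC^*$-action and gives $1$, while without the interior constraint one picks up the automorphism factor $\frac{1}{w}$. In positive genus, the $\lambda_g$ vanishing for the stable maps to $\PP^1$ with totally ramified profiles at $0$ and $\infty$ forces the integrand to vanish. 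Both statements are exactly the content of \cite[Lemma 3.2, 3.3]{bousseau2021floor}, which I simply quote.

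The only step requiring real care is the bookkeeping of automorphism factors and of the identification between the boundary evaluations and the fiber-parameter map, which is where the $\frac{1}{w}$ versus $1$ discrepancy between the three invariants originates. Once this is clarified, the three formulas follow by combining the trivial $\widetilde{E}$-integration with the local $\PP^1$-computation.
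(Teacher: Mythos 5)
Your overall route is the same as the paper's: observe that every connected stable map in class $w\sfp$ factors through a single fiber $C\cong\PP^1$ of $\pi:X\to\widetilde{E}$, use the cycle insertions to fix that fiber (one fiber for a point insertion, $\alpha\cdot\beta=1$ fibers for the $1$-cycle insertions), and then quote \cite[Lemma 3.2, 3.3]{bousseau2021floor} for the local computation on $\PP^1$, with the $1$ versus $\frac1w$ discrepancy coming from the $\ZZ/w\ZZ$ automorphisms of the totally ramified cover when no interior marking is present. That part is fine and matches the paper.

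The one step that is wrong as written is your claim that ``the virtual class \ldots respect[s] this product structure'' and that the $g>0$ vanishing comes from ``the $\lambda_g$ vanishing for stable maps to $\PP^1$.'' Neither holds: $\lambda_g$ does not vanish on $\M_{g,n}(\PP^1/\{0,\infty\},w,\mu)$, and the virtual class of $\M_{g,n}(X/D^\pm,w\sfp,\mu)$ is not the product $[\widetilde{E}]\times\vir{\M_{g,n}(\PP^1/\{0,\infty\},w,\mu)}$ --- a dimension count already rules this out, since the two sides differ in virtual dimension by exactly $g$. The correct mechanism, which the paper spells out, is that the obstruction theories for maps into $C$ and into $X$ differ by the bundle with fiber $H^1(\CCC,f^*N_{C/X})$; as $N_{C/X}$ is trivial this is $\EE^\vee$ by Serre duality, so the comparison inserts an extra Euler class $(-1)^g\lambda_g$ into the integrand, and the vanishing for $g>0$ follows from Mumford's relation $\lambda_g^2=0$. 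Since you ultimately defer to Bousseau's lemmas, which contain exactly this argument, your conclusion is correct, but the justification you give for the positive-genus vanishing should be replaced by the excess-obstruction-bundle argument.
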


\begin{proof}
We follow the proof from \cite[Lemma 3.2,3.3]{bousseau2021floor}, to which we refer for more details. The curve class $\sfp$ satisfies $\sfp^2=0$, and we have a $1$-dimensional family of curves in the class $\sfp$, although they do not form a linear system. Every connected curve in the class $w\sfp$ factors through some fiber of the projection $\pi:X\to \widetilde{E}$, \textit{i.e.} a curve $C$ in the class $\sfp$. As the normal bundle $N_C$ to $C$ is trivial, the perfect obstruction theories for stable maps to $C$ and to $X$ differ by the top Chern class of the bundle whose fiber over the relative stable map $f:\CCC\to X$ is $H^1(\CCC,f^*N_{C})$, which is the dual of the Hodge bundle by Serre duality. Thus, we get the vanishing of the invariants with a top $\lambda$-class, since $\lambda_g^2=0$ when $g>0$.

To compute the invariants in each of the remaining situations, we see that there is a unique fiber in case of a point insertion, and $\alpha\cdot\beta=1$ fiber if we have $1$-cycle insertions. Then, we have a unique stable map $f:\PP^1\to C$ of degree $w$ fully ramified over $0$ and $\infty$. If we have a marked point in the main strata, it kills any automorphism. If we do not, then we have an automorphism group $\ZZ/w\ZZ$ acting by multiplication by roots of unity, so that each solution contributes $\frac{1}{w}$.
\end{proof}

\section{Computation of the GW invariants of bielliptic surfaces}

We now get to one of the main section of the paper, where we give a way to compute the GW invariants of bielliptic surfaces. The idea is to apply the degeneration formula from \cite{li2002degeneration,kim2018degeneration} using the degenerations presented in Section \ref{sec-construction-degeneration}. According to \cite{li2002degeneration}, the invariants express as a sum over some discrete data, which in our situation are basically tropical covers of the circle, with suitable weights. Through this discrete data, the GW invariants of bielliptic surfaces are reduced down to the relative GW invariants of $\widetilde{E}\times\PP^1$, which have been computed in Section \ref{sec-computation-invariant-boundary}.

	\subsection{Different kinds of diagrams}
	\label{sec-different-kinds-of-diagrams}
	
	The decomposition formula expresses the GW invariants as a sum over a discrete data. In the case presented in \cite{li2002degeneration}, the discrete data consists in a collection of decorated bipartite graphs. This is due to the fact that \cite{li2002degeneration} deals with a family of surfaces that degenerate to a reducible surface consisting only of two irreducible components meeting along a smooth divisor. The two colors of the graph correspond to the two components of the reducible fiber. As the central fibers of the families constructed in Section \ref{sec-construction-degeneration} may have more than two component, the discrete data is more complicated: they consist in some decorated graphs we call \textit{decomposition diagrams}. In our situation, most of the decoration and vertices on the decomposition diagram is uniquely determined, so that the discrete data can be indexed simpler graphs called \textit{pearl diagrams}, similar to the pearl diagrams presented in \cite{blomme2022abelian3}. The link between both kind of diagrams is made through \textit{discerning pearl diagrams}.
	
	\begin{defi}\label{def-pearl-diagram}
	A \textit{diagram} is a connected oriented graph $\PPP$ with vertices labeled by $[\![1;n]\!]$ and the following additional data:
	\begin{enumerate}[label=(\roman*)]
	\item each vertex $V$ is assigned a curve class $\varpi_V=a_V\sfs+b_V\sfp\in H^2(X,\ZZ)$ and a genus $g_V$,
	\item each edge $e$ is assigned a weight $w_e\geqslant 1$ and a height $h_e\geqslant 0$.
	\end{enumerate}
	A vertex with $a_V=0$ is called a flat vertex, and other vertices are called non-flat.
	\begin{itemize}[label=$\circ$]
	\item A diagram is called a \textbf{decomposition diagram} if it satisfies the following:
		\begin{enumerate}[label=(\Alph*)]
		\item[(B)] For any vertex $V$, the sum of the weights of the edges leaving $V$ (resp. arriving at $V$) is equal to $b_V$.
		\item[(H)] For any oriented edge $e$ linking vertices $V_1$ and $V_2$, we are in one of the following situations:
			\begin{itemize}[label=$\ast$]
			\item Labels are $k$ and $k+1$ with $k<n$, and $h_e=0$.
			\item Labels are $n$ and $1$, and $h_e=1$.
			\end{itemize}
		\item[(M)] Among the vertices with a given label $k\in[\![1;n]\!]$, we have a distinguished vertex called \textit{preferred} vertex.
		\end{enumerate}
		
	\item A diagram is called a \textbf{discerning pearl diagram} if it satisfies (B), (H) and the following additional conditions, with (M') replacing (M):
		\begin{enumerate}[label=(\Alph*)]
		\item[(G)] Flat vertices have genus $0$, non-flat vertices have genus $1$.
		\item[(F)] Flat vertices are bivalent.
		\item[(M')] Among the vertices with a given label $k\in[\![1;n]\!]$, we have at most one non-flat vertex, and a distinguished vertex called \textit{preferred} vertex which is the unique non-flat vertex when it exists.
		\end{enumerate}
		
	\item A diagram is called a \textbf{pearl diagram} if it satisfies (B), (G), (F) and the new (H'),(M''):
		\begin{enumerate}[label=(\Alph*)]
		\item[(H')] For an oriented edge $e$ linking vertices $V_1$ and $V_2$ with labels $k_1,k_2\in[\![1;n]\!]$, we are in one of the following situations:
			\begin{itemize}[label=$\ast$]
			\item if $k_1<k_2$, then $h_e\geqslant 0$,
			\item if $k_1\geqslant k_2$, then $h_e\geqslant 1$.
			\end{itemize}
		\item[(M'')] There is exactly one vertex with each possible label $k\in [\![1;n]\!]$.
	\end{enumerate}
	\end{itemize}
	\end{defi}
	
	\begin{rem}
	The name of the conditions stand for the following: (B) for ``Balancing", since it means vertices have zero divergence, (H) for ``Height" since it imposes a condition on the heights of the edges, (M) for ``Marking" since it is related to the choice of a distinguished vertex among the vertices with a given label, (G) for ``Genus" and (F) for ``Flat".
	\end{rem}
	
	\begin{rem}
	The motivation for these definitions is that the discrete data indexing the decomposition formula is naturally the set of decomposition diagrams, with a natural multiplicity. When the latter is non-zero, the decomposition diagram is actually a discerning pearl diagram. Finally, it is possible to delete most vertices of a discerning pearl diagram to get a pearl diagram, much easier to handle. We explicit this connection in the rest of the section.
	\end{rem}
	
	Choose $n$ distinct points $0<x_1<\cdots<x_n<1$ on $\RR/\ZZ$. The data of a diagram $\PPP$ uniquely determines a tropical cover of $\RR/\ZZ$ branched exactly at the $x_i$ (see for instance \cite{buchholz2015tropical} for precise definitions of tropical covers):
	\begin{itemize}[label=$\ast$]
	\item Vertices with label $i$ are mapped to $x_i$.
	\item The image of an oriented edge $e$ linking vertices with labels $k_1$ and $k_2$ goes from $x_{k_1}$ to $x_{k_2}$ following the natural orientation of $\RR/\ZZ$, passing $h_e$ times over $0$.
	\item The condition (B) ensures balancing.
	\end{itemize}
	
	Thus, the diagrams can be seen as tropical covers of $\RR/\ZZ$ with an additional decoration by genus and homology classes.
	
	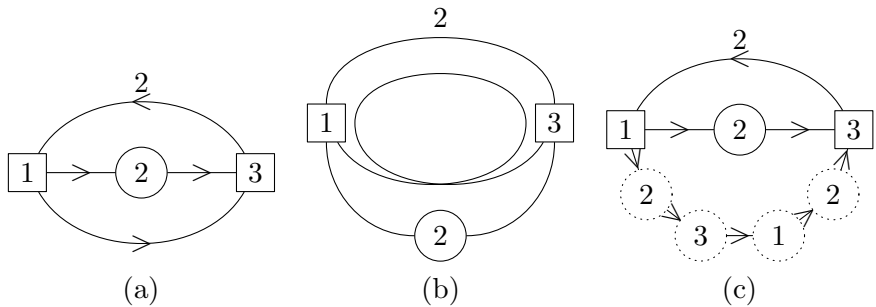
\begin{figure}
	\begin{center}
	\begin{tabular}{ccc}
	\begin{tikzpicture}[line cap=round,line join=round,x=0.75cm,y=0.75cm]
	\pearl (1) at (180:2) label=1;
	\flatpearl (2) at (-90:0) label=2;
	\pearl (3) at (0:2) label=3;
	
	\draw (1) to node[midway,sloped] {$>$} (2);
	\draw (2) to node[midway,sloped] {$>$} (3);
	\draw (1) to[out=-60,in=-120] node[midway,sloped] {$>$} (3);
	\draw (3) to[out=120,in=60] node[midway,sloped] {$<$} node[midway,above] {$2$} (1);
	\end{tikzpicture} & \begin{tikzpicture}[line cap=round,line join=round,x=0.75cm,y=0.75cm]
	\pearl (1) at (180:2) label=1;
	\flatpearl (2) at (-90:2) label=2;
	\pearl (3) at (0:2) label=3;
	
	\draw (1) to[out=-90,in=180] (2);
	\draw (2) to[out=0,in=-90] (3);
	\draw (3) to[out=90,in=90] node[midway,above] {$2$} (1);
	\draw (1) to[out=-60,in=-90] (0:1.5) to[out=90,in=90] (180:1.5) to[out=-90,in=-120] (3);
	\end{tikzpicture} & \begin{tikzpicture}[line cap=round,line join=round,x=0.75cm,y=0.75cm]
	\pearl (1) at (180:2) label=1;
	\flatpearl (2) at (-90:0) label=2;
	\pearl (3) at (0:2) label=3;
	\dottedpearl (D1) at (-145:2) label=2;
	\dottedpearl (D2) at (-110:2) label=3;
	\dottedpearl (D3) at (-70:2) label=1;
	\dottedpearl (D4) at (-35:2) label=2;
	
	\draw (1) to node[midway,sloped] {$>$} (2);
	\draw (2) to node[midway,sloped] {$>$} (3);
	\draw (1) to node[midway,sloped] {$>$} (D1) to node[midway,sloped] {$>$} (D2) to node[midway,sloped] {$>$} (D3) to node[midway,sloped] {$>$} (D4) to node[midway,sloped] {$>$} (3);
	
	\draw (3) to[out=120,in=60] node[midway,sloped] {$<$} node[midway,above] {$2$} (1);
	\end{tikzpicture} \\
	(a) & (b) & (c) \\
	\end{tabular}
	
	\caption{\label{fig-expl-pearl-diagram-1}Example of pearl diagram, map to $\RR/\ZZ$ and associated discerning pearl diagram. Non-flat vertices are depicted as square-box, circle-boxes for flat-vertices, and dotted circles for non-preferred vertices.}
	\end{center}
	\end{figure}

\begin{figure}
	\begin{center}
	\begin{tabular}{ccc}
	\begin{tikzpicture}[line cap=round,line join=round,x=0.75cm,y=0.75cm]
	\pearl (1) at (180:2) label=1;
	\draw (1) to[out=-60,in=-120] (-60:1) to[out=60,in=-60] node[midway,sloped] {$>$} (60:1) to[out=120,in=60] (1) ;
	\end{tikzpicture} & \begin{tikzpicture}[line cap=round,line join=round,x=0.75cm,y=0.75cm]
	\pearl (1) at (-90:2) label=1;
	\draw (1) to[out=30,in=30] (90:1) to[out=-150,in=180] (-90:1) to[out=0,in=-30] (90:1) to[out=150,in=150] (1);
	\end{tikzpicture} & \begin{tikzpicture}[line cap=round,line join=round,x=0.75cm,y=0.75cm]
	\pearl (1) at (180:2) label=1;
	\dottedpearl (D1) at (0:2) label=1;
	\draw (1) to[out=-60,in=-120] node[midway,sloped] {$>$} (D1) to[out=120,in=60] node[midway,sloped] {$<$} (1) ;
	\end{tikzpicture} \\
	(a) & (b) & (c) \\
	\end{tabular}
	
	\caption{\label{fig-expl-pearl-diagram-2}Another example of pearl diagram, map to $\RR/\ZZ$ and associated discerning pearl diagram.}
	\end{center}
	\end{figure}

	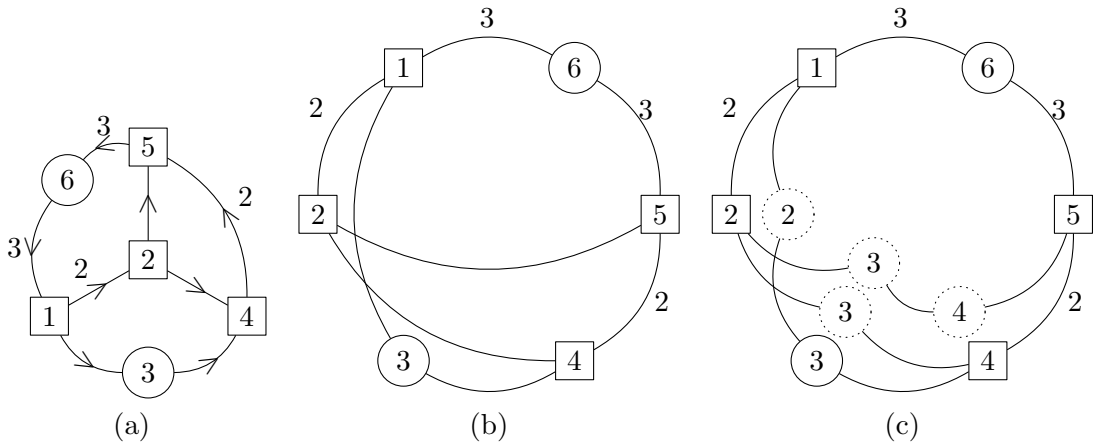
\begin{figure}
	\begin{center}
	\begin{tabular}{ccc}
	\begin{tikzpicture}[line cap=round,line join=round,x=0.75cm,y=0.75cm]
	\pearl (1) at (-150:2) label=1;
	\pearl (2) at (180:0) label=2;
	\flatpearl (3) at (-90:2) label=3;
	\pearl (4) at (-30:2) label=4;
	\pearl (5) at (90:2) label=5;
	\flatpearl (6) at (135:2) label=6;
	\draw (1) to node[midway,sloped] {$>$}  node[midway,above left] {$2$} (2) ;
	\draw (1) to[bend right] node[midway,sloped] {$>$} (3) ;
	\draw (3) to[bend right] node[midway,sloped] {$>$} (4) ;
	\draw (2) to node[midway,sloped] {$>$} (5) ;
	\draw (2) to node[midway,sloped] {$>$} (4) ;
	\draw (4) to[bend right] node[midway,sloped] {$<$}  node[midway,above right] {$2$} (5) ;
	\draw (5) to[bend right] node[midway,sloped] {$<$} node[midway,above] {$3$} (6) ;
	\draw (6) to[bend right] node[midway,sloped] {$<$} node[midway,left] {$3$} (1) ;
	\end{tikzpicture} & \begin{tikzpicture}[line cap=round,line join=round,x=0.75cm,y=0.75cm]
	\pearl (1) at (120:3) label=1;
	\pearl (2) at (180:3) label=2;
	\flatpearl (3) at (-120:3) label=3;
	\pearl (4) at (-60:3) label=4;
	\pearl (5) at (0:3) label=5;
	\flatpearl (6) at (60:3) label=6;
	\draw (1) to[bend right]  node[midway,above left] {$2$} (2) ;
	\draw (1) to[bend right] (3) ;
	\draw (3) to[bend right] (4) ;
	\draw (2) to[bend right] (5) ;
	\draw (2) to[bend right] (4) ;
	\draw (4) to[bend right]  node[midway,right] {$2$} (5) ;
	\draw (5) to[bend right] node[midway,above] {$3$} (6) ;
	\draw (6) to[bend right]  node[midway,above] {$3$} (1) ;
	\end{tikzpicture} & \begin{tikzpicture}[line cap=round,line join=round,x=0.75cm,y=0.75cm]
	\pearl (1) at (120:3) label=1;
	\pearl (2) at (180:3) label=2;
	\dottedpearl (D2) at (180:2) label=2;
	\flatpearl (3) at (-120:3) label=3;
	\dottedpearl (D3) at (-120:2) label=3;
	\dottedpearl (D3bis) at (-120:1) label=3;
	\pearl (4) at (-60:3) label=4;
	\dottedpearl (D4) at (-60:2) label=4;
	\pearl (5) at (0:3) label=5;
	\flatpearl (6) at (60:3) label=6;
	
	\draw (1) to[bend right]  node[midway,above left] {$2$} (2) ;
	\draw (1) to[bend right] (D2);
	\draw (D2) to[bend right] (3) ;
	\draw (3) to[bend right] (4) ;
	\draw (2) to[bend right] (D3bis);
	\draw (D3bis) to[bend right] (D4);
	\draw (D4) to[bend right] (5) ;
	\draw (2) to[bend right] (D3);
	\draw (D3) to[bend right] (4) ;
	\draw (4) to[bend right]  node[midway,right] {$2$} (5) ;
	\draw (5) to[bend right] node[midway,above] {$3$} (6) ;
	\draw (6) to[bend right]  node[midway,above] {$3$} (1) ;
	\end{tikzpicture} \\
	(a) & (b) & (c) \\
	\end{tabular}
	
	\caption{\label{fig-expl-pearl-diagram-3}Yet another example of pearl diagram, map to $\RR/\ZZ$ and associated discerning pearl diagram.}
	\end{center}
	\end{figure}

	\begin{expl}
	On Figure \ref{fig-expl-pearl-diagram-1},\ref{fig-expl-pearl-diagram-2} and \ref{fig-expl-pearl-diagram-3} we depict several diagrams. We do not write the genus of the vertices nor the homology classes, and only the weights $w_e\geqslant 2$ are written. The heights can be read on the depiction of the map to $\RR/\ZZ$ as the number of preimages of $0$, \textit{i.e.} the intersection with a half-line going from center to the top infinity. On (a) we depict the pearl diagram with its orientation. On (b), the tropical cover to $\RR/\ZZ$, which also gives the value of the height. On (c), we depict the discerning pearl diagram, obtained by subdividing the edges.
		\begin{itemize}[label=$\ast$]
		\item For the diagrams on Figure \ref{fig-expl-pearl-diagram-1}, we have two non-flat vertices and one flat vertex for the pearl diagram in (a). Only the edges linking the vertices with label $1$ and $3$ have height $1$. To get the discerning floor diagram, we need to subdivide the edge going from $1$ to $3$ making an additional turn. Assuming non-flat vertices have genus $1$ and flat vertices genus $0$, the genus is $1+1+2=4$. The degree of the tropical cover is $3$, equal to $\sum_e h_ew_e$.
		\item The diagram on Figure \ref{fig-expl-pearl-diagram-2} has genus $2$, and the tropical cover has degree $2$.
		\item For Figure \ref{fig-expl-pearl-diagram-3}, the diagram is more sophisticated. Its genus is $7$, the tropical cover has degree $3$. Several edges need to be subdivided to get the discerning floor diagrams. Only the edge linking vertices with labels $6$ to $1$ has height $1$.
		\end{itemize}
	\end{expl}
	
	We upgrade $\RR/\ZZ$ with its $n$ marked point to a cyclic graph $C_n$ with $n$ vertices labeled by $[\![1;n]\!]$. Given a diagram $\PPP$ with the map $\PPP\to\RR/\ZZ$. If $\PPP$ is a decomposition diagram or a discerning pearl diagram, condition (H) ensures that the map is in fact a graph map to $C_n$, meaning the preimages of vertices are also vertices. This is not necessarily the case for pearl diagrams.
	
	The relation between the different kind of diagrams is as follows:
	\begin{itemize}[label=$\ast$]
	\item A decomposition diagram is a discerning pearl diagram if it satisfies the additional conditions (G), (F), and the enhancement (M') of (M), which gives more condition on the set of vertices having a given label.
	\item Given a pearl diagram, we obtain a discerning pearl diagram as follows: we subdivide the edges by a adding all the preimages of $x_1,\dots,x_n\in\RR/\ZZ$. Each new vertex gets assigned the label of its image by the map to $\RR/\ZZ$, genus $0$ and the class $w\sfp$ if it lies on an edge of weight $w$. Edges linking $n$ to $1$ gets height $1$, and the other height $0$. The preferred vertex with label $k$ is the vertex coming from $\PPP$. This way, condition (M') is satisfied, as well as (H).
	\item Conversely, given a discerning pearl diagram $\PPP$, we can delete every non-preferred (necessarily flat) vertex by merging the adjacent two edges, which share the same weight. The height of an edge obtained through merging is the sum of the heights of the merged edges. After all deletions, condition (M'') is satisfied, as well as (H').
	\end{itemize}
	
	\begin{expl}
	We can observe the correspondence between discerning pearl diagrams and pearl diagrams on Figures \ref{fig-expl-pearl-diagram-1}, \ref{fig-expl-pearl-diagram-2} and \ref{fig-expl-pearl-diagram-3}.
	\end{expl}
	
	To each vertex is assigned a vector $\mu_V\in \ZZ^{n_V(+1)}$ with $n_V$ the valency of $V$, plus one if it is one of the preferred vertices. It consists in the weights of the edges leaving $V$, with a sign according to whether the edge is ingoing or outgoing. If the vertex is one of the preferred vertex, the additional entry is $0$.
	
	\begin{expl}
	On Figure \ref{fig-expl-pearl-diagram-3}, the vertex with label $2$ has associated vector $(0,1,1,-2)$, and the vertex with label $1$ has associated vector $(0,1,2,-3)$.
	\end{expl}
	
	\begin{defi}
	Given a diagram $\PPP$, we define:
	\begin{itemize}[label=$\ast$]
	\item its genus by $g(\PPP)=b_1(\Gamma)+\sum_V g_V$,
	\item its degree $\varpi=a \sfe+b\sff$, with $a=\sum a_V$ and $b$ is the degree of the tropical cover $\PPP\to\RR/\ZZ$, equal to $\sum w_eh_e$ by counting the preimages of $0$.
	\end{itemize}
	\end{defi}
	
	The degree of the tropical cover is the weighted number of preimages at any non-branched point. It does not depend on the chosen point thanks to the balancing condition (B).
	
	\begin{rem}
	If $\PPP$ is a (discerning) pearl diagram, the genus is equal to the first Betti number plus the number of non-flat vertices, since condition (G) states that they are the only vertices having positive genus, equal to $1$.
	\end{rem}
	
	\begin{expl}
	We already computed the degree of the tropical covers for the diagrams in Figures \ref{fig-expl-pearl-diagram-1}, \ref{fig-expl-pearl-diagram-2} and \ref{fig-expl-pearl-diagram-3}, yielding the $b\sff$ part of their degree. The $a\sfe$ part is obtained by adding the $a_V\sfs$ part of the classes associated to each non-flat vertex.
	\end{expl}

	\subsection{Correspondence for enumerative invariants}
	
	In this section, we assign multiplicities to diagrams, so that their count yields the enumerative GW invariants of bielliptic surfaces. This correspondence relies on the decomposition formula.
	
	\subsubsection{Decomposition formula.} We (finally) apply the decomposition formula from \cite{li2002degeneration} (or \cite{kim2018degeneration}) to get a raw multiplicity assigned to each decomposition diagram. Let $S$ be a bielliptic surface, $g\geqslant 2$ and $n=g-1$.
	
	\begin{prop}\label{prop-decomposition}
	The enumerative invariant $\gen{\pt^{g-1} }^S_{g,\varpi}$ decomposes as follows:
	$$\gen{\pt^{g-1} }^S_{g,\varpi} = \sum_{\PPP} \frac{\prod_e w_e}{|\mathrm{Aut}(\PPP)|}\int_{\prod\vir{\M_V}}\prod_{e\in\PPP}\delta_e\prod_1^n\ev_i^*(\pt),$$
	where the sum is over the decomposition diagrams having total genus $g$ and total degree $\varpi$.
	\end{prop}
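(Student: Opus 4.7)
The plan is to apply the degeneration formula of \cite{li2002degeneration} (or \cite{kim2018degeneration}) to the family $\SSS\to\CC$ built in Section \ref{sec-construction-degeneration}, choosing the length of the necklace to be $L=n=g-1$. Since the GW-invariant is a deformation invariant, $\gen{\pt^{g-1}}^S_{g,\varpi}$ may be computed on the central fiber $\SSS_0$, which is a cyclic chain of $n$ ruled surfaces $X_1,\dots,X_n$ (each a trivial or torsion $\PP^1$-bundle over $\widetilde E$) glued along divisors $D_k=X_k\cap X_{k+1}\simeq\widetilde E$, indices taken modulo $n$. We fix a generic configuration of $g-1$ point constraints by placing exactly one marked point on the smooth locus of each component $X_k$.

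First, I would invoke the decomposition formula to write $\gen{\pt^{g-1}}^S_{g,\varpi}$ as a sum over admissible discrete data parametrizing how a stable map to $\SSS_0$ splits into relative stable maps on the components $X_k$, glued with matching tangency profiles along the divisors $D_k$. This data is naturally encoded in a graph: vertices correspond to connected components of the preimage of the smooth locus of some $X_k$, carrying a label $k$, a genus $g_V$, and a curve class $\varpi_V=a_V\sfs+b_V\sfp\in H_2(X_k,\ZZ)$; edges correspond to nodes mapping to some $D_k$, carrying as weight $w_e$ the common tangency order on both sides. Orienting each edge from the $X_k$-side to the $X_{k+1}$-side (with indices mod $n$), the matching of tangency orders at $D_k$ on either side is precisely the balancing condition (B), since $b_V=\varpi_V\cdot D_{k-1}=\varpi_V\cdot D_k$.

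Next, I would verify the remaining structural conditions. The cyclic arrangement of the components forces every edge to link consecutive labels $k$ and $k+1$, and assigning $h_e=0$ when $k<n$ versus $h_e=1$ for the edge from $n$ to $1$ is exactly the bookkeeping that turns the graph into a tropical cover of $C_n$ in the sense of Section \ref{sec-different-kinds-of-diagrams}, yielding condition (H). Condition (M) — the choice of a preferred vertex at each label — records which piece of the relative stable map supports the point insertion placed on $X_k$. Thus the admissible data of the decomposition formula are in bijection with the decomposition diagrams appearing in the sum.

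Finally, I would identify each factor of the multiplicity with a term in Li's formula: the product $\prod_e w_e$ is the standard gluing contribution from the nodes (one factor of $w_e$ per node mapping into a divisor, coming from the multiplicity of the defining equation of the singular locus in the expanded degeneration), while $|\mathrm{Aut}(\PPP)|$ is the automorphism factor of the underlying discrete datum. The integrand $\prod_e\delta_e$ is the product of diagonal classes in $(D_k)^2\simeq\widetilde E^2$ implementing, for each edge, the coincidence of the two evaluations at the corresponding node, and the point insertions $\prod\ev_i^*(\pt)$ act on the single moduli space of relative stable maps attached to each preferred vertex. The only real subtlety I expect is the usual one in the degeneration formula, namely keeping track of expanded degenerations and the combinatorics of bubbles attached along the $D_k$; however, the cyclic geometry does not introduce new difficulties beyond those already handled in \cite{li2002degeneration,kim2018degeneration}, and the identification of the combinatorial indexing set with decomposition diagrams is a direct translation of conditions (B), (H), (M).
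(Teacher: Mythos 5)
Your proposal is correct and follows essentially the same route as the paper: degenerate $S$ to the necklace of $n=g-1$ ruled surfaces from Section \ref{sec-construction-degeneration}, distribute one point constraint per component, invoke deformation invariance and Li's degeneration formula, and identify the combinatorial indexing data with decomposition diagrams, with $\prod_e w_e$, $|\mathrm{Aut}(\PPP)|$ and $\prod_e\delta_e$ playing exactly the roles you describe. The only difference is cosmetic — you spell out the verification of conditions (B), (H), (M) a bit more explicitly than the paper does.
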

	
	\begin{proof}
	Using Section \ref{sec-construction-degeneration}, we choose a family $\SSS_t$ of bielliptic surfaces with $\SSS_1=S$ degenerating into a necklace of $n=g-1$ ruled surfaces $X_1\cup X_2\cup \cdots\cup X_n$, with all $X_i$ isomorphic to the same ruled surface $X$. The surface $X_i$ has two divisors $D_i^\pm$, with $D_i^+=D_{i+1}^-$. We choose point constraints such that the $i$th marked point goes to $X_i$. 
	
	The moduli space $\M_{g,n}(\SSS_t,\varpi)$ is endowed with a virtual fundamental class, and the GW invariants are constant in family, so that we can compute them for the central fiber $\SSS_0$, for which the degeneration formula \cite{li2002degeneration} gives an expression as a sum over the decomposition diagrams:
	$$\gen{\pt^{g-1}}_{g,\varpi}^S = \sum_{\PPP}\frac{\prod_e w_e}{|\mathrm{Aut}(\PPP)|}\int_{\vir{\M^\PPP}} \prod_1^n\ev_i^*(\pt),$$
	with $\vir{\M^\PPP}$ the virtual fundamental class associated to $\PPP$ by the decomposition formula. The decomposition diagrams actually encode some specific classes of curves in $\SSS_0$. Let $f:C\to\SSS_0$ be a curve associated to a decomposition diagram $\PPP$:
	\begin{itemize}[label=$\ast$]
	\item vertices are in bijection with the irreducible components of $C$, with the label $k$ meaning the component is mapped to $X_k$,
	\item the preferred vertex among the vertices with a given label corresponds to the component with the marked point $i$,
	\item an edge of weight $w$ between vertices having labels $k$ and $k+1$ corresponds to a node of the curve, mapped to the divisor $X_k\cap X_{k+1}=D_k^+=D_{k+1}^-$, and the components associated to the adjacent vertices have tangency $w$ with the divisor at this node. This may be referred as the \textit{kissing condition} (Thanks to F. Carocci for giving me this very short expression).
	\end{itemize}
	The virtual fundamental class is computed as follows:
	$$\vir{\M^\PPP} = \Delta^!\left(\prod_V \vir{\M_V}  \right),$$
	where $\M_V=\M_{g_V,n_V(+1)}(X,\varpi_V,\mu_V)$, $n_V$ is the valency, plus one if there is a marked point on the component, and the $\Delta^!$ means the capping with the product of diagonal classes by the evaluation morphism. The diagonal class is the product of diagonal classes for each edge of the graph. Thus, we get the desired formula.
	\end{proof}
	
	The formula assigns to each decomposition diagram a multiplicity
	$$m(\PPP)=\frac{\prod_e w_e}{|\mathrm{Aut}(\PPP)|}\int_{\prod\vir{\M_V}}\prod_{e\in\PPP}\delta_e\prod_1^n\ev_i^*(\pt) .$$
	To compute the latter, it suffices to replace each of the diagonal classes $\delta_e$ by its K\"unneth decomposition, expand, and finally split the computation over the various components indexed by the vertices.

	\subsubsection{Diagonal class of an elliptic curve and class insertions.} Let $E$ be an elliptic curve. According to Milnor \cite[Theorem 11.11]{milnor1974characteristic}, the cohomology class $\delta$ Poincar\'e dual to the diagonal $\Delta\subset E\times E$ has the following K\"unneth decomposition in a basis $\beta_i$ of $H^*(E,\ZZ)$:
	$$\delta=\sum (-1)^{\mathrm{rk}\beta_i}\beta_i\otimes \beta_i^\#,$$
	where $\gen{\beta_i\cup\beta_j^\#,[E]}=\delta_{ij}$, meaning they are dual bases. This yields the result in cohomology Using the basis $(1,\alpha,\beta,\pt)$, this yields the following expression:
	\begin{align*}
	\delta = & 1\otimes\pt+\pt\otimes 1-\alpha\otimes\beta+\beta\otimes\alpha \in H^2(E,\ZZ)\otimes H^2(E,\ZZ) \\
	= & \pt_1+\pt_2-\alpha_1\beta_2-\alpha_2\beta_1 .\\
	\end{align*}
	
	\begin{rem}
	Notice that the diagonal class is invariant by the swapping automorphism that exchanges both coordinates using the skew-commutativity.
	\end{rem}
	
	We now define the notion of \textit{insertion} for a diagram $\PPP$, which will help us express the multiplicity of a decomposition diagram. A \textit{flag} for a graph is a pair $(V,e)$ with $V\in e$.
	
	\begin{defi}
	An insertion for a diagram $\PPP$ is the assignment of a cohomology class $\gamma_e^\pm \in H^*(E,\ZZ)$ to each flag of $\PPP$, such that the two cohomology classes $(\gamma_e^+,\gamma_e^-)$ assigned to the two flags containing the edge $e$ are dual to each other: $\gamma_e^+\otimes\gamma_e^-$ appears in the K\"unneth decomposition of $\delta$.
	\end{defi}	
	
	For each summand $\gamma_e^+\otimes\gamma_e^-$ appearing in the K\"unneth decomposition of the diagonal $\delta_e$, $\gamma_e^+$ and $\gamma_e^-$ are assigned to the extremities of the edge $e$, \textit{i.e.} the flags containing the edge. Thus, the multiplicity is a sum over the possible cycle insertions of the diagram. In particular, the multiplicity associated to a given decomposition diagram $\PPP$ is a sum over all the possible class insertions, and for each choice of class insertion, the integral over $\prod\vir{\M_V}$ splits.

	\subsubsection{From decomposition diagrams to discerning pearl diagrams.}\label{sec-reduction-family-diagram} Before getting to the explicit computation of $m(\PPP)$, we prove that if the multiplicity is non-zero, then the decomposition diagram is actually a discerning pearl diagram.
	
		\begin{lem}\label{lem-genus-vertices-enum-case}
		Let $\PPP$ be a decomposition diagram such that $m(\PPP)\neq 0$. Then we have the following:
		\begin{enumerate}[label=(\roman*)]
		\item flat vertices (with $a_V=0$) are bivalent,
		\item there is at most one non-flat vertex with a given label and it is the preferred one,
		\item non-flat vertices have genus $1$ and flat vertices have genus $0$.
		\end{enumerate}
		In particular, $\PPP$ is a discerning pearl diagram.
		\end{lem}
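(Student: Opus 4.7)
The plan is to expand the multiplicity $m(\PPP)$ using the Künneth decomposition of each diagonal class $\delta_e$, so that after distributing, $m(\PPP)$ becomes a sum over class insertions on flags, and each summand factors as a product of relative GW invariants of $X=\widetilde{E}\times\PP^1$, one for each vertex of $\PPP$. The integral over $\prod\vir{\M_V}$ splits because the evaluation map splits accordingly and the only class coupling two vertices is the diagonal on the shared edge, which we have just decomposed. Consequently, $m(\PPP)\neq 0$ forces every vertex to carry a nonzero relative GW invariant of $X$ in its class $\varpi_V=a_V\sfs+b_V\sfp$, with the prescribed valence $n_V$ (and an extra interior point insertion if $V$ is preferred), and I can just apply the vanishing results already proven in Section 3.

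With that in hand, I would analyze the four types of vertex one by one. For a non-flat, non-preferred vertex ($a_V\neq 0$, no interior point), the only evaluation constraints are the flag classes, each of complex degree at most $1$; comparing with the virtual dimension $g_V-1+n_V$ and using the hypersurface relation from \cite[Proposition 2.13]{blomme2021floor} that restricts boundary evaluations (the one already used in the proof of Lemma \ref{lem-non-zero-enum-inv}), I would obtain $g_V\leqslant 0$, contradicting $a_V\neq 0$ (no non-constant map from a rational curve to $\widetilde{E}$); so such vertices contribute zero. This already shows that every non-flat vertex must be the preferred one among the vertices sharing its label, which gives (ii). For a non-flat, preferred vertex, Lemma \ref{lem-non-zero-enum-inv}(i) forces $g_V=1$, giving half of (iii).

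For flat vertices ($a_V=0$, class $b_V\sfp$), I would apply Lemma \ref{lem-non-zero-enum-inv}(ii) to the preferred case and Lemma \ref{lem-non-zero-enum-inv}(iii) to the non-preferred case: in both cases the only nonzero invariants have exactly two boundary insertions and genus $0$. This gives $n_V=2$ (so flat vertices are bivalent, i.e.\ (i)) and $g_V=0$, completing (iii). Combining the three items with conditions (B), (H), (M) of Definition \ref{def-pearl-diagram} then yields exactly the conditions (F), (G), (M') defining a discerning pearl diagram, which is the final assertion.

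The main obstacle I expect is bookkeeping rather than conceptual: one must check that after Künneth expansion the contribution of a non-preferred vertex really coincides with one of the relative invariants in Lemma \ref{lem-non-zero-enum-inv} (in particular that, once the diagonals contribute classes of the form $\pt\otimes 1$, $1\otimes\pt$, $\alpha\otimes\beta$, $-\beta\otimes\alpha$ on each flag, only the insertion patterns listed there can survive). Handling the case of a non-flat non-preferred vertex cleanly requires being careful about the one-dimensional boundary constraint, but this is precisely the ingredient already used in the proof of Lemma \ref{lem-non-zero-enum-inv}, so it transports directly.
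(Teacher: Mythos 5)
Your proposal is correct and follows essentially the same route as the paper: expand each diagonal class via K\"unneth, split the integral into vertex-local relative invariants of $X$, and invoke the vanishing results of Lemma \ref{lem-non-zero-enum-inv} and the dimension counts to pin down valence and genus at each type of vertex. The only (harmless) variation is in item (ii): the paper rules out non-preferred non-flat vertices by the $\CC^*$-action on the $\PP^1$-fibers, whereas you use the dimension count $g_V-1+n_V=\sum|\gamma_i|\leqslant n_V-1$ forcing $g_V\leqslant 0$, incompatible with $a_V\neq 0$; both arguments are valid.
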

		
		\begin{proof}
		We replace each diagonal class $\delta_e$ by its K\"unneth decomposition and we expand. This amounts to insert cohomology classes at each flag such that the classes associated to an edge are dual to each other. We are reduced to computation of relative invariants for the surfaces $X_k$. By assumption, since only one marked point is mapped to each $X_i$, for each such invariant we have at most one point insertion in the main stratum.
		\begin{enumerate}[label=(\roman*)]
		\item The only non-zero invariants for the classes $w\sfp$ are achieved for valency $2$. Thus, flat vertices have valency $2$.
		\item Invariants for a non-flat vertex are $0$ unless there is at least a point insertion due to the $\CC^*$-action.
		\item For a non-flat vertex, the genus is at least $1$ for the moduli space of curves being non-empty, and the only non-zero invariants are obtained for $g_V=1$, as proved in Lemma \ref{lem-non-zero-enum-inv}. Similarly, for a flat vertex, the only non-zero invariants are obtained for $g_V=0$.
		\end{enumerate}
		\end{proof}
	
	\begin{rem}
	A priori, there seems to be a lot of possible insertions: $4$ power the number of edges of $\PPP$. However, most of them do not contribute to the multiplicity, as we see in Lemma \ref{lem-position-marked-flat-vertices-enum-case}.
	\end{rem}
		
	According to Lemma \ref{lem-genus-vertices-enum-case}, every decomposition diagram with non-zero multiplicity is a discerning pearl diagram. In particular, all the flat vertices have genus $0$, so we can delete them and get a pearl diagramwith only $n=g-1$ vertices. Let $\V_\mathrm{flat}$ be the set of flat vertices. As non-flat vertices have genus $1$, we have
		$$n+1 = g = (n-|\V_\mathrm{flat}|)+b_1(\PPP),$$
		so that there are precisely $b_1(\PPP)-1$ flat vertices in the pearl diagram. The following lemma describes the relative position of marked flat vertices on $\PPP$.
		
		\begin{lem}\label{lem-position-marked-flat-vertices-enum-case}
		Let $\PPP$ be a pearl diagram such that the associated discerning pearl diagram has non-zero multiplicity, then each connected component of the complement flat vertices in $\PPP$ possesses a unique cycle.
		\end{lem}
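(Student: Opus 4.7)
The plan is to use the K\"unneth decomposition of the diagonal of the elliptic curve,
$\delta = 1 \otimes \pt + \pt \otimes 1 - \alpha \otimes \beta + \beta \otimes \alpha$,
together with the enumeration of non-zero local invariants from Lemma~\ref{lem-non-zero-enum-inv}, to translate the non-vanishing of the multiplicity into a combinatorial constraint on $\PPP$.

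First I would classify the admissible class assignments on flags of the associated discerning pearl diagram. Lemma~\ref{lem-non-zero-enum-inv}(i) forces every preferred non-flat vertex to be either of \emph{type~(a)} (one flag labelled ``$1$'', all others labelled ``$\pt$'') or of \emph{type~(b)} (one ``$\alpha$'', one ``$\beta$'', and the rest ``$\pt$''). Lemma~\ref{lem-non-zero-enum-inv}(ii) forces every preferred flat vertex to carry ``$1$'' on both of its flags, while Lemma~\ref{lem-non-zero-enum-inv}(iii) forces any non-preferred flat vertex to carry one of the pairs $\{1,\pt\}$ or $\{\alpha,\beta\}$. Since the pairs appearing on an edge by K\"unneth are $\{1,\pt\}$ or $\{\alpha,\beta\}$, propagation through a chain of non-preferred flat vertices acts as a swap $1 \leftrightarrow \pt$ (or $\alpha \leftrightarrow \beta$); consequently every edge of $\PPP$ itself has a well-defined type, \emph{type-$1$} with pair $\{1,\pt\}$ or \emph{type-$2$} with pair $\{\alpha,\beta\}$.

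Next I would prove that $\PPP$ contains no edge between two (necessarily preferred) flat vertices. Both flags of a preferred flat vertex carry ``$1$'', so any incident edge must carry ``$\pt$'' at the opposite flag; an edge joining two preferred flat vertices would then force its two flags to carry ``$1$'' at each end, contradicting the K\"unneth pairing. Writing $E_{nn},E_{nf},E_{ff}$ for the numbers of edges joining two non-flat, one non-flat and one flat, or two flat vertices, this gives $E_{ff}=0$. Combining the genus relation $g=b_1(\PPP)+k$ (with $k$ the number of non-flat vertices and $n=g-1$), the resulting $|E(\PPP)|=2n-k$, and the balancing $E_{nf}+2E_{ff}=2(n-k)$ at bivalent flat vertices yields $E_{nn}=k+E_{ff}=k$. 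In particular, the subgraph $H\subset\PPP$ obtained by deleting flat vertices satisfies $|E(H)|=|V(H)|$.

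The final step is to show that each connected component $C$ of $H$ contains at least one cycle; combined with the preceding identity, this forces each component to have cyclomatic number exactly one, which is the ``unique cycle'' condition. The type-$2$ edges form a disjoint union of cycles since every vertex carries $0$ or $2$ type-$2$ flags, and no flat vertex of $\PPP$ carries an ``$\alpha$'' or ``$\beta$'' flag, so each type-$2$ cycle lies entirely in $H$; if $C$ contains a type~(b) vertex the type-$2$ cycle through it lies in $C$ and we are done. Otherwise $C$ consists only of type~(a) vertices, and I would orient each type-$1$ edge from its ``$1$'' flag to its ``$\pt$'' flag. Every vertex of $C$ then has out-degree exactly $1$, and since its out-neighbour must carry a ``$\pt$'' flag---which no preferred flat vertex possesses---the out-edge remains inside $C$; iterating in the finite set $C$ produces a cycle. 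The most delicate ingredient will be this last observation that the ``$1$'' out-flags of type~(a) vertices cannot terminate at preferred flat vertices, as it is precisely what rules out a type-(a)-only tree component.
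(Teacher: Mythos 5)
Your proposal is correct, and its skeleton matches the paper's: constrain the K\"unneth insertions via Lemma \ref{lem-non-zero-enum-inv}, do an edge/vertex (Euler characteristic) count showing the total cyclomatic number of the complement of the flat vertices equals its number of components, and then show each component carries at least one cycle. The two proofs diverge in how they execute the last step. The paper argues by contradiction: it cuts the graph at the flat vertices, notes the resulting Euler characteristics sum to zero, and then prunes a hypothetical tree component leaf by leaf (each leaf's forced ``$1$'' insertion pushes a point class onto its neighbour) until a single vertex with all flags equal to $\pt$ remains, which vanishes for dimension reasons. You instead exhibit the cycle directly: the $\{\alpha,\beta\}$ edges form a $2$-regular subgraph avoiding the flat vertices, and on an all-type-(a) component the ``$1\to\pt$'' orientation gives every vertex out-degree one with out-edges staying in the component, so iteration closes up. Your route also makes explicit two facts the paper leaves implicit, namely that no edge joins two flat vertices of $\PPP$ and that each edge of $\PPP$ inherits a well-defined pair type through the non-preferred subdivision vertices; the resulting degree count $E_{nn}=k$ is equivalent to the paper's additivity of Euler characteristics after cutting. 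Both arguments rest on the same input (exactly one non-$\pt$ boundary flag at a type-(a) vertex, the $\alpha/\beta$ pair at a type-(b) vertex), so neither is more general, but yours has the small advantage of being constructive and of recording which insertions survive, which is reused later when computing the multiplicity.
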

		
		\begin{proof}
		Let $\PPP$ be a pearl diagram and let $\overline{\PPP}$ be the associated discerning pearl diagram. Consider a flat vertex $V$ of $\PPP$, which is associated to some marked point, where we have a point insertion. By Lemma \ref{lem-non-zero-enum-inv}(ii), the only way to get a non-zero invariant is to insert the neutral class $1$ at each of the adjacent flags. We thus need to insert point classes at the flags adjacent to neighbors of $V$. Therefore, we can disconnect the graph at these marked bivalent vertices, replacing the bivalent vertex by a pair of univalent vertices, with a point class inserted. As there are $b_1(\PPP)-1$ flat vertices in $\PPP$, the Euler characteristic of the resulting graph is
		$$1-b_1(\PPP)+(b_1(\PPP)-1)=0.$$
		
		\smallskip
		
		We now consider a connected component $\PPP_k$ of this new graph. Let $1-g_k$ be its Euler characteristic, with $g_k\geqslant 0$ its genus. By additivity, the sum of Euler characteristics is equal to $0$. We wish to show that $1-g_k\leqslant 0$, so that they all are equal to $0$ and thus $g_k=1$ for every component $\PPP_k$.
		
		\smallskip
		
		Assume that $1-g_k=1$, so that the component has no cycle: it is a tree. We already have point insertions at all the outgoing flags, previously adjacent to the cut edges. We can inductively prune the tree.
		\begin{itemize}[label=$\ast$]
		\item Assume $V$ is a non-flat vertex with every adjacent flag but one assigned a point insertion. By Lemma \ref{lem-non-zero-enum-inv}(i), to get a non-zero local invariant, we need to insert the neutral class at the remaining flag. This forces to insert a point class at the flag adjacent to the neighbor of $V$, and we can prune the vertex.
		\item If $V$ is a flat vertex (without marked point since those have already been deleted), we proceed similarly using Lemma \ref{lem-non-zero-enum-inv}(iii), as the only way to get a non-zero local invariant is to insert the neutral class at the other flag.
		\end{itemize}
		At some point, we are left with a unique vertex with every adjacent flag assigned a point insertion, which leads to a $0$ invariant for dimensional reasons. Thus, we need at least one cycle: $1-g_k\leqslant 0$. As $\sum_i (1-g_k)=0$, then every $g_k$ is equal to $1$ and we get the result.
		\end{proof}
		
		\begin{rem}
		Notice furthermore that through the proof of Lemma \ref{lem-position-marked-flat-vertices-enum-case}, the insertions outside the cycles are uniquely determined (any other choice yields $0$ contribution).
		\end{rem}
		
		So far, we did not prove that the multiplicity is non-zero since we only deduced necessary conditions for the latter to be non-zero. This vastly reduces the number of diagrams, since we only need to care about the pearl diagrams satisfying the condition given by Lemma \ref{lem-position-marked-flat-vertices-enum-case}

		\subsubsection{Computation of the diagram multiplicity.}\label{sec-computation-multiplicity} Up to now, we did not specify which kind of bielliptic surface the family was since it did not affect the family of diagrams. However, the multiplicity of a diagram does depend on which kind of bielliptic surface we look. This only matters concerning the monodromy going around the direction of the the necklace of $\PP^1$. Indeed, the divisors $D_i^\pm$ are all identified with the elliptic curve $\widetilde{E}$, which is the quotient of the elliptic fiber $E$ by the translational part of the group $G$. However, there is a monodromy which affects the cohomology classes, as it acts non-trivially on $H^1(E,\ZZ)$. We denote by $M$ this monodromy in a chosen basis of $H_1(E,\ZZ)$. It takes the following values, according to whether $\SSS_t$ is of type $(a)$, $(b)$, $(c)$ or $(d)$:
		$$M=\begin{pmatrix}
		-1 & 0 \\
		0 & -1 \\
		\end{pmatrix},\ \begin{pmatrix}
		0 & -1 \\
		1 & -1 \\
		\end{pmatrix},\ \begin{pmatrix}
		0 & -1 \\
		1 & 0 \\
		\end{pmatrix},\text{ or } \begin{pmatrix}
		0 & -1 \\
		1 & 1 \\
		\end{pmatrix},$$
	whicha re of order $n=2,3,4,6$. If we were to consider Abelian surfaces, the monodromy would be the identity matrix. The action on $H^1(M,\ZZ)$ is given by ${^t}M$. We set $\tau_n(h)=2-\mathrm{tr}M^h$, where $\mathrm{tr}$ denotes the trace of a matrix. It is possible to check that the values are indeed the values given in the introduction. We forget about the index and write only $\tau_n$ if the context does not require it.

		\begin{prop}\label{prop-computation-multiplicity}
		Let $\PPP$ be a pearl diagram. For each component $\PPP_k$ in the complement of flat vertices, let $\nu_k\in\ZZ\simeq\pi_1(\RR/\ZZ)$ be the class realized by its unique cycle. The multiplicity is given by
		$$m(\PPP)=\prod_k\tau(\nu_k)\prod_V a_V^{n_V-1}\sigma_1(a_V)\prod_{E_\mathrm{flat}} w_e \prod_{E\backslash E_\mathrm{flat}} w_e^3,$$
		where the first product is over the the components $\PPP_k$, second product over vertices, third product over the set $E_\mathrm{flat}$ edges adjacent to a flat vertex, and last over the remaining edges.
		\end{prop}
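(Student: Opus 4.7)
The plan is to start from Proposition~\ref{prop-decomposition}, expand each diagonal $\delta_e$ using its K\"unneth decomposition twisted by the monodromy $M^{h_e}$ that identifies the two copies of $\widetilde{E}$ glued along $e$, and evaluate the resulting sum over cohomology insertions $\gamma_e^\pm$ at the flags of $\PPP$. By Lemma~\ref{lem-genus-vertices-enum-case} only discerning pearl diagrams survive with nonzero contribution, and these are in canonical bijection with pearl diagrams via subdivision of each edge of $\PPP$ at the preimages of the branch points on $\RR/\ZZ$. Each subdivision vertex is bivalent, flat and non-preferred of class $w_e\sfp$, contributing $1/w_e$ via Proposition~\ref{prop-computation-fibers}; this combines with the $w_e$ prefactor per edge of $\overline{\PPP}$ to produce exactly one $w_e$ per edge of $\PPP$ from the subdivision.

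Next I would analyse the insertions edge by edge in $\PPP$. For an edge adjacent to a flat (preferred) vertex of $\PPP$, Lemma~\ref{lem-non-zero-enum-inv}(ii) forces insertions $(1,1)$ at the flat vertex; the latter contributes $1$ and the dual $\pt$ at the non-flat endpoint is absorbed into its $a_V^{n_V-1}\sigma_1(a_V)$ factor, giving a total of $w_e$ per such edge. For tree edges inside a floor component, the inward pruning used in the proof of Lemma~\ref{lem-position-marked-flat-vertices-enum-case} forces a $(1,\pt)$ pair; Proposition~\ref{prop-computation-bdry-inv-pt-case} then gives $\mu_1^2=w_e^2$ at the unit endpoint, for a total of $w_e^3$ per tree edge. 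On each cycle $\gamma_k$, the four admissible cycle-flag patterns at any vertex are $(1,\pt),(\pt,1),(\alpha,\beta),(\beta,\alpha)$, with the remaining tree flags all $\pt$; the resulting insertion-dependent vertex factor $\mu_1^2$ or $\pm\mu_1\mu_2$ multiplies around the cycle to $\prod_{e\in\gamma_k} w_e^2$ in every admissible configuration.

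The crux is then the cycle sum. I would interpret it as the graded Lefschetz trace of the composed monodromy $M^{\nu_k}$ acting on $H^*(\widetilde{E})$: each twisted diagonal is the identity of $H^*(\widetilde{E})$ twisted by $M^{h_e}$, their composition around $\gamma_k$ is $M^{\nu_k}$, and the sum over insertions weighted by the $(-1)^{|\beta_i|}$ K\"unneth signs equals $1-\mathrm{tr}(M^{\nu_k}|_{H^1})+1=\tau_n(\nu_k)$. Combined with the $w_e^2$ weight factor per cycle edge and the $w_e$ prefactor, the cycle contributes $\tau_n(\nu_k)\prod_{e\in\gamma_k}w_e^3$. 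Assembling this across all cycles, tree edges, flat-adjacent edges and non-flat vertices (each contributing $a_V^{n_V-1}\sigma_1(a_V)$) produces the stated formula. The main obstacle will be the sign bookkeeping in the cycle trace: balancing the graded K\"unneth signs, the opposite tangency conventions $\pm w_e$ at tail and head of each edge, the explicit $-\mu_1\mu_2$ of Proposition~\ref{prop-computation-bdry-inv-1-cycle-case}, the skew-symmetry of Lemma~\ref{lem-skew-symmetry-invariants} in swapping $\alpha,\beta$, and the monodromy twist, so that all contributions assemble precisely into $\tau_n(\nu_k)$ independently of the cycle's internal structure.
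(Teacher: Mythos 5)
Your proposal is correct and follows essentially the same route as the paper's proof: apply the decomposition formula, restrict to discerning pearl diagrams via Lemma \ref{lem-genus-vertices-enum-case}, prune tree branches using the vanishing results to force the $(1,\pt)$ insertions, and reduce the cycle contribution to the four propagating insertion patterns summing to $2-\mathrm{tr}(M^{\nu_k})$. Your reading of that last sum as the graded Lefschetz trace of $M^{\nu_k}$ on $H^*(\widetilde{E})$ is a pleasant conceptual repackaging of the paper's explicit enumeration (Steps 4--5), but the underlying computation, including the sign cancellation you flag, is identical.
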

		
		\begin{proof}
		To compute the multiplicity, we expand each diagonal class to get a sum over all the possible insertions. Let $\overline{\PPP}$ be the associated discerning pearl diagram.
		
		\smallskip
		
	\textbf{Step 1: reducing down to the cycles.} We first proceed as in the proof of Lemma \ref{lem-position-marked-flat-vertices-enum-case} by cutting at flat vertices of $\PPP$ and pruning the branches. All the insertions outside the cycles are uniquely determined.
		\begin{itemize}[label=$\ast$]
		\item When cutting a flat vertex, we have a $w_e^2$ factor appearing for both adjacent edges since $\gen{\pt_0,1_{-w},1_w}^{X/D^\pm}_{0,w\sfp}=1$, and we have two adjacent edges.
		\item When pruning a (non-preferred) flat vertex of $\overline{\PPP}$, we still have a $w$ factor for the remaining adjacent edge, but as
		$$\gen{\pt_{-w},1_w}^{X/D^\pm}_{0,w\sfp}=\gen{1_{-w},\pt_w}^{X/D^\pm}_{0,w\sfp}=\frac{1}{w},$$
		due to automorphisms, both cancel out, so that we may just not care about these vertices.
		\item Last, when we prune a non-flat vertex, the invariant has value
		$$\gen{\pt_0,1_{\mu_1},\pt_{\mu_2},\cdots,\pt_{\mu_n}}^{X/D^\pm}_{1,a_V\sfs+b_V\sfp} = \mu_1^2\cdot a_V^{n_V-1}\sigma_1(a_V).$$
		We have an additional $|\mu_1|$ for the edge, yielding the expected exponent $3$, and we can prune the vertex.
		\end{itemize}
	
	\medskip
		
	\textbf{Step 2: determining the monodromy around the loops.} Once pruned all the possible vertices, we are left we a disjoint union of cycles. Consider one of them and label its vertices and edges by $[\![1;L]\!]$. The monodromy map in homology between the elliptic fiber $E_1$ at the vertex $1$ and $E_L$ at the vertex $l$ is given by the group action of bielliptic surfaces. Let $M$ denote the action on homology:
		$$M:H_1(E_1,\ZZ)\to H_1(E_L,\ZZ),$$
		which is the matrix of the multiplication by $-1$, $i$, $\rho$ or $-\rho^2$. We need to take the $\nu$-power where $\nu\in\ZZ\simeq\pi_1(\Gamma_n)$ is the class realized by the cycle. Thus, the monodromy in cohomology is given by the transpose:
		$${^t}M^\nu:H^1(E_L,\ZZ)\to H^1(E_1,\ZZ),$$
		where $E_L$ is the fiber over the $L$-vertex, and $E_1$ the fiber over the $1$ vertex.
		
	\medskip
		
	\textbf{Step 3: taking care about each cycle.} The pruning algorithm stops due to the presence of cycles in the components of the complement of marked flat vertices. To finish the computation, we need to insert one of the four terms in the expression of the diagonal class at each edge of the cycle, expanding $\prod_e \delta_e$. The evaluation map at the two marked points corresponding to the edges adjacent to the vertex $i$ are denoted by $\ev_i^+$ and $\ev_i^-$. The product of diagonal classes at the each edge takes the following form:
	$$\prod_1^{L-1} (\ev_j^+\times\ev_{j+1}^-)^*(\delta_j) \times (\ev_L^+\times\ev_1^-)^*(\delta_L).$$
	We have the following expression for the diagonal classes:
	\begin{align*}
	\delta_j = & \pt_j^+ + \pt_{j+1}^- - \alpha_j^+\beta_{j+1}^- + \beta_j^+\alpha_{j+1}^-,\\
	\delta_L = & \pt_L^+ + \pt_1^- - \alpha_L^+ f^*(\beta_1^-) + \beta_L^+ f^*(\alpha_1^-).\\
	\end{align*}
	The indices $j$ and exponents $\pm$ are here to recall to which marked point the class refers: $\bullet_j^\pm$ refers to a marked point mapped to $D_j^\pm$. The application $f$ denotes the monodromy around the cycle and has been determined in the second step.
	
	\medskip
	
	\textbf{Step 4: finding the insertions with non-zero contribution.} We expand the product $\prod_j\delta_j$ by replacing each diagonal class by its K\"unneth decomposition. Lemma \ref{lem-non-zero-enum-inv} shows that given an insertion $\gamma_j^\pm$, there is a unique insertion $\gamma_j^\mp$ leading to a non-zero invariant
	$$\gen{\pt_0,(\gamma_j^-)_{w_{j-1}},(\gamma_j^+)_{w_j},\pt_\bullet,\cdots,\pt_\bullet}^{X/D^\pm}_{1,a_V\sfs+b_V\sfp} \text{ and }\gen{(\gamma_j^-)_{w_{j-1}},(\gamma_j^+)_{w_j}}^{X/D^\pm}_{0,w_j\sfp}.$$
	More precisely, choose the insertion $\gamma_1^+$: either $1$, $\pt$, $\alpha$ or $\beta$.
		\begin{itemize}[label=$\ast$]
		\item If $\gamma_1^+=\pt$, then $\gamma_2^-=1$ since they are dual basis elements, imposed by the diagonal insertion for the edge between the vertices $1$ and $2$. Then, the only way to get a non-zero invariant at the vertex $2$ is to have $\gamma_2^+=\pt$. Thus, the point constraint propagates. The total insertion is
		\begin{align*}
		 & \ev_1^+(\pt)\ev_2^-(1)\ev_2^+(\pt)\cdots\ev_L^-(1)\ev_L^+(\pt)\ev_1^-(1) \\
		= & \ev_1^-(1)\ev_1^+(\pt)\ev_2^-(1)\ev_2^+(\pt)\cdots\ev_L^-(1)\ev_L^+(\pt). \\
		\end{align*}
		
		\item We conclude similarly if $\gamma_1^+=1$, since we get $\gamma_2^-=\pt$ and $\gamma_2^+=1$ and the insertion also propagates, alternating $1$ and $\pt$.
		
		\item We now assume that the insertion $\gamma_1^+$ is $\alpha$. As the term appearing in the diagonal class is $-\alpha\otimes\beta$, we deduce that $\gamma_2^-=-\beta$. The only insertion that yields a non-zero local invariant at the vertex $2$ is $\alpha$, and we can also propagate the insertion. The difference is that there is some monodromy when going around the cycle: we get
		$$\ev_1^+(\alpha)\ev_2^-(-\beta)\ev_2^+(\alpha)\cdots\ev_L^-(-\beta)\ev_L^+(\alpha)\ev_1^-(f^*(-\beta)).$$
		As all the insertions are $1$-cycle, they anti-commute. Thus, when putting the $\ev_1^-(f^*(-\beta))$ back in front of the product, we get a factor $(-1)^{2L-1}=-1$, yielding:
		$$-\ev_1^-(f^*(-\beta))\ev_1^+(\alpha)\ev_2^-(-\beta)\ev_2^+(\alpha)\cdots\ev_L^-(-\beta)\ev_L^+(\alpha).$$

		\item If we had $\gamma_1^+=\beta$, as $\beta\otimes\alpha$ appears in the diagonal class, we get that $\gamma_2^-=\alpha$. Thus, the only insertion yielding a non-zero invariant at the vertex $2$ is $\gamma_2^+=\beta$, and we can propagate. In the end, we get
		\begin{align*}
		 & \ev_1^+(\beta)\ev_2^-(\alpha)\ev_2^+(\beta)\cdots\ev_L^-(\alpha)\ev_L^+(\beta)\ev_1^-(f^*\alpha) \\
		= & -\ev_1^-(f^*\alpha)\ev_1^+(\beta)\ev_2^-(\alpha)\ev_2^+(\beta)\cdots\ev_L^-(\alpha)\ev_L^+(\beta). \\
		\end{align*}
		\end{itemize}
		
		\textbf{Step 5: gathering the contributions.} We have found four possible choices of insertions, each being fully determined by the first insertion $\gamma_1^+$. For each of them we get
		$$\pm\int_{\prod [\M_j]}\prod_{j=1}^L (\ev_j^-(\gamma_j^-)\ev_j^+(\gamma_j^+)\prod\ev^*(\pt),$$
		where the second product contains the non-diagonal insertions: the point insertions for the non-flat vertices, and the insertions coming from vertices adjacent to the cycle. We can split this integral over each moduli space $[\M_j]$ parametrizing the curves encoded by the vertex $j$. We forget about the (unmarked) flat vertices since their contribution is cancelled by the product of edge weights due to their automorphisms. Consider a non-flat vertex $V$ of valency $n_V$. We have already computed
		$$\gen{\pt_0,\alpha_{\mu^V_1},\beta_{\mu^V_2},\pt_{\mu^V_3},\cdots,\pt_{\mu^V_{n_V}}}^{X/D^\pm}_{g,a_j\sfs+b_j\sfp} = -\mu^V_1\mu^V_2\cdot a_j^{n_j-1}\sigma_1(a_j),$$
		$$\gen{\pt_0,1_{\mu^V_1},\pt_{\mu^V_2},\pt_{\mu^V_3},\cdots,\pt_{\mu^V_{n_V}}}^{X/D^\pm}_{g,a_j\sfs+b_j\sfp} = (\mu^V_1)^2\cdot a_j^{n_j-1}\sigma_1(a_j).$$
		If $V$ is the $j$-th vertex in the cycle, $\mu^V_1=\pm w_j$ and $\mu^V_2=\pm w_{j+1}$, with signs depending on the orientation of the edges with respect to the orientation of $\RR/\ZZ$. In any case, up to sign, each of the four possible insertions yields
		$$\prod_1^L w_j^2 \prod_1^L a_j^{n_j-1}\sigma_1(a_j).$$
		Concerning the signs in case we insert a $1$-cycle, we have seen that is it is negative if and only if both constraint lie on the same component, \textit{i.e.} the points where the cycle goes in reverse direction when mapped to $\RR/\ZZ$. There are an even number of such point, so that the signs cancel each other. To finish, we only need to get rid of the monodromy. Assume that in the basis $(\alpha,\beta)$, the matrix of $f^*$ has the form $\left(\begin{smallmatrix} a & b \\ c & d \\ \end{smallmatrix}\right)$. Then, up to the global factor that we already took care of, only the first invariant depends on the intersection number between $f^*(-\beta)$ and $\alpha$ for the $\alpha$ insertion, and between $f^*\alpha$ and $\beta$ for the $\beta$ insertion. They are computed as follows:
		$$\begin{array}{rlrl}
		f^*(-\beta)\cdot\alpha = & (-b\alpha-d\beta)\cdot\alpha , & f^*\alpha\cdot \beta = & (a\alpha+c\beta)\cdot\beta  \\
		 = & d & 		 = & a.\\
		\end{array}$$
		As $a+d=\mathrm{tr}f^*$, taking into account the minus sign in front of the product of $1$-cycle insertions, we get
		$$2-\mathrm{tr}f^*.$$
		The $1+1=2$ is from the insertions of the point class and neutral class at $\gamma_1^+$, and the $\mathrm{tr}f^*=a+d$ from the insertions of $1$-cycle. Gathering this constant term, the vertex contribution and the product of edge weights, we get the result.
		\end{proof}

		\begin{rem}
		If the monodromy is trivial, as in the case of the product of elliptic curve $E\times F$, the term $2-\mathrm{tr}M^\nu=\tau(\nu)$ vanishes. we recover that the GW invariant of an abelian surface are $0$.
		If $M\neq I_2$ but $M^\nu=I_2$ for some $\nu$, we have some diagrams that give a $0$ contribution to the GW invariant.
		\end{rem}

	\begin{expl}
We explicit the formula on the diagrams from Figures \ref{fig-expl-pearl-diagram-1}, \ref{fig-expl-pearl-diagram-2} and \ref{fig-expl-pearl-diagram-3}.
	\begin{itemize}[label=$\ast$]
	\item On Figure \ref{fig-expl-pearl-diagram-1}. The product over edge weights give $2^3\cdot 1^3\cdot 1\cdot 1$. The vertex contribution is $a_1^2\sigma_1(a_1)\cdot a_3^2\sigma_1(a_3)$. Once we removed the flat-vertex, the only cycle is formed by the two edges linking the vertices $1$ and $3$, and the homotopy class it realizes is $2\in\pi_1(\RR/\ZZ)$. Thus, the multiplicity is
	$$\tau(2)\cdot a_1^2\sigma_1(a_1)\cdot a_3^2\sigma_1(a_3)\cdot 8.$$
	
	\item The pearl diagram on Figure \ref{fig-expl-pearl-diagram-2} is easier since there is no flat vertex. If the only edge would have weight $w$, we would get
	$$\tau(2)\cdot a_1\sigma_1(a_1)\cdot w^3.$$
	
	\item Last, for the diagram on Figure \ref{fig-expl-pearl-diagram-3}, the cycle obtained by removing the flat vertices is $2-4-5$, and realizes the homotopy class $0\in\pi_1(\RR/\ZZ)$, so that the diagram has in fact multiplicity $0$.
	\end{itemize}		
	\end{expl}

	\subsection{$\lambda$-classes and refinement}
	
	In this section, we explain how the multiplicities are modified to deal with the invariants with a $\lambda$-class insertion.

\subsubsection{Decomposition formula.} We now give the multiplicities that allow to compute the refined invariants. To do so, we use the decomposition formula for $\lambda$-class proven in \cite[Proposition 3.1]{bousseau2021floor}.

\begin{prop} \cite[Prop 3.1]{bousseau2021floor} \label{prop-splitting-lambda-class}
Let $\M$ be a finite Deligne-Mumford stack over $\CC$, $\Gamma$ a connected graph with first Betti number $g_\Gamma$. For each vertex $V$ let $\pi_V:\CCC_V\to\M$ be a family of genus $g_V$ prestable curves. For each flag $V\in e$, let $s_{V,e}:\M\to\CCC_V$ be a section avoiding nodes. Denote by $\pi:\CCC\to\M$ the curve obtained by gluing the sections corresponding to flags of the same edge, which is a family of prestable curves of genus $g=g_\Gamma+\sum g_V$. Then, for every $0\leqslant j\leqslant g$,
$$\lambda_{j,\pi}=\sum_{ \substack{0\leqslant j_V\leqslant g_V \\ \sum j_V=j} }\prod_V\lambda_{j_V,\pi_V}.$$
In particular, $\lambda_{g-j,\pi}=0$ if $j< g_\Gamma$.
\end{prop}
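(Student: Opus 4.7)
The plan is to establish a short exact sequence comparing the Hodge bundle $\E_\pi$ with the direct sum $\bigoplus_V \E_{\pi_V}$, governed by the combinatorics of $\Gamma$, and then to conclude by multiplicativity of total Chern classes.

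The first step is the normalization exact sequence. If $\nu: \widetilde{\CCC} = \bigsqcup_V \CCC_V \to \CCC$ denotes the normalization (the inverse of the gluing) and $n_e : \M \to \CCC$ is the relative node associated to each edge $e$, then
$$0 \to \O_\CCC \to \nu_*\O_{\widetilde{\CCC}} \to \bigoplus_e (n_e)_*\O_\M \to 0.$$
Applying $R\pi_*$ and using $\pi_*\O_\CCC = \O_\M = \pi_{V*}\O_{\CCC_V}$ (connected fibers), together with the finiteness of $\nu$ and of the sections $n_e$, the long exact sequence produces
$$\O_\M \xrightarrow{\Delta} \bigoplus_V \O_\M \xrightarrow{\partial} \bigoplus_e \O_\M \to R^1\pi_*\O_\CCC \to \bigoplus_V R^1\pi_{V*}\O_{\CCC_V} \to 0,$$
where $\partial$ is the coboundary map of $\Gamma$. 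Its cokernel is the trivial bundle of rank $|E(\Gamma)| - |V(\Gamma)| + 1 = g_\Gamma$, so trimming gives
$$0 \to \O_\M^{g_\Gamma} \to R^1\pi_*\O_\CCC \to \bigoplus_V R^1\pi_{V*}\O_{\CCC_V} \to 0.$$

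Dualizing by relative Serre duality (which identifies $R^1\pi_*\O \simeq \E^\vee$) yields the key sequence
$$0 \to \bigoplus_V \E_{\pi_V} \to \E_\pi \to \O_\M^{g_\Gamma} \to 0.$$
Since the quotient is trivial, Whitney's formula gives $c(\E_\pi) = \prod_V c(\E_{\pi_V})$, and reading off the degree $j$ component produces exactly $\lambda_{j,\pi} = \sum_{\sum j_V = j}\prod_V \lambda_{j_V,\pi_V}$. The vanishing of $\lambda_{g-j,\pi}$ when $j<g_\Gamma$ is then immediate: the identity $\sum j_V = g-j > g-g_\Gamma = \sum_V g_V$ forces some $j_V > g_V = \mathrm{rk}\,\E_{\pi_V}$, and hence a vanishing factor.

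The only delicate step is the identification of the cokernel of $\partial$ with a trivial bundle of rank $g_\Gamma$; this is the relative version of the classical equality $\dim H^1(\Gamma,\mathbb{C}) = g_\Gamma$, and it is canonical because the combinatorial graph $\Gamma$ does not vary over $\M$. With this in hand, everything else is formal.
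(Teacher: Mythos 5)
The paper itself gives no proof of this proposition --- it is quoted directly from Bousseau --- and your argument is precisely the standard one from that reference: push forward the relative normalization sequence, identify the cokernel of the graph coboundary with $H^1(\Gamma)\otimes\O_\M\cong\O_\M^{g_\Gamma}$, dualize via relative Serre duality to get the extension $0\to\bigoplus_V\EE_{\pi_V}\to\EE_\pi\to\O_\M^{g_\Gamma}\to 0$, and apply Whitney's formula. Every step checks out, including the rank count for the final vanishing statement.
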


We now apply the decomposition formula as in Proposition \ref{prop-decomposition} to get the following.

\begin{prop}\label{prop-decomposition-lambda-case}
We have the following decomposition:
$$\gen{\lambda_{g-g_0};\pt^{g_0-1} }^S_{g,\varpi} = \sum_\PPP \sum_{\substack{0\leqslant j_V\leqslant g_V \\ \sum j_V=j} }
\frac{\prod_e w_e}{|\mathrm{Aut}(\PPP)|}\int_{\prod\vir{\M_V}} 
\prod_V\lambda_{j_V,\pi_V}
\prod_e\delta_e\prod_1^n\ev_i^*(\pt) ,$$
where the sum is over the decomposition diagrams with genus $g$ and degree $\varpi$.
\end{prop}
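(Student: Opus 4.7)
The plan is to combine the degeneration argument used in Proposition~\ref{prop-decomposition} with the multiplicativity of $\lambda$-classes across normalizations given by Proposition~\ref{prop-splitting-lambda-class}. Concretely, I would reuse verbatim the degeneration setup of Proposition~\ref{prop-decomposition}: a family $\SSS_t$ of bielliptic surfaces with $\SSS_1=S$ whose central fiber $\SSS_0$ is a necklace $X_1\cup\cdots\cup X_n$ of ruled surfaces, with the $i$-th point constraint supported on $X_i$. Since $\lambda_{g-g_0}$ is pulled back from $\overline{\M}_{g,n}$, it is defined on the moduli space over the whole family, and the deformation invariance of GW-invariants still lets us compute $\gen{\lambda_{g-g_0};\pt^{g_0-1}}^S_{g,\varpi}$ on the central fiber.

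Next I would apply Li's degeneration formula \cite{li2002degeneration} (or Kim's log version \cite{kim2018degeneration}), which writes $\vir{\M_{g,n}(\SSS_0,\varpi)}$ as a sum over decomposition diagrams $\PPP$ of the classes $\Delta^!\bigl(\prod_V\vir{\M_V}\bigr)$ weighted by $\frac{\prod_e w_e}{|\mathrm{Aut}(\PPP)|}$, exactly as in the proof of Proposition~\ref{prop-decomposition}. The point insertions and diagonal classes $\delta_e$ appear as they did in the enumerative case, so the only new task is to track how $\mathrm{ft}^*\lambda_{g-g_0}$ behaves under this decomposition.

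The crucial input is that for a stable map $f:C\to\SSS_0$ encoded by $\PPP$, the domain $C$ is obtained by gluing the normalizations $\CCC_V$ of its irreducible components along the nodes corresponding to edges. Proposition~\ref{prop-splitting-lambda-class} (applied to the universal curve $\pi:\CCC\to\M^\PPP$ and the per-vertex universal families $\pi_V:\CCC_V\to\M_V$) then gives
\[
\lambda_{g-g_0,\pi}=\sum_{\substack{0\leqslant j_V\leqslant g_V\\ \sum j_V=g-g_0}}\prod_V\lambda_{j_V,\pi_V},
\]
where the vanishing clause of that proposition automatically kills any term with $\sum j_V<g-g_0-g_\Gamma$. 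Pulling back along $\mathrm{ft}$ and substituting into the degeneration formula yields the claimed identity.

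The only subtle point I would be careful about is to justify that the pullback $\mathrm{ft}^*\lambda_{g-g_0}$ on $\M^\PPP$ really equals the Hodge class $\lambda_{g-g_0,\pi}$ of the glued universal curve (so that Proposition~\ref{prop-splitting-lambda-class} is directly applicable), and that mixing this decomposition with $\Delta^!$ is compatible in the sense that $\Delta^!$ commutes with capping against the pulled-back $\lambda$-classes. Both follow because $\lambda$-classes are pulled back from $\overline{\M}_{g,n}$ along a map that factors through $\M^\PPP$, and the $\lambda$-class statement of Proposition~\ref{prop-splitting-lambda-class} is formulated at the level of universal prestable curves, so it is insensitive to the tangency/kissing conditions imposed by $\Delta^!$. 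Once this compatibility is recorded, the identity follows by expanding the splitting inside the integral of the degeneration formula.
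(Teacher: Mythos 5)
Your proposal is correct and follows essentially the same route as the paper: the paper's proof is a one-line reduction to the argument of Proposition \ref{prop-decomposition} combined with the $\lambda$-class splitting of Proposition \ref{prop-splitting-lambda-class}, which is exactly what you carry out (with the extra care about compatibility of $\mathrm{ft}^*\lambda_{g-g_0}$ with the glued universal curve being a reasonable point to record, though the paper leaves it implicit).
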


\begin{proof}
The proof is as the proof of Proposition \ref{prop-decomposition}, using the expression of the $\lambda$-class provided by \cite[Prop 3.1]{bousseau2021floor}.
\end{proof}

\subsubsection{From decomposition diagrams to discerning pearl diagrams.} Proposition \ref{prop-decomposition-lambda-case} assigns a multiplicity to each decomposition diagram, depending on the $\lambda$-class we insert. The latter expresses as a sum expanding the sum provided by the $\lambda$-class decomposition, and the diagonal classes using K\"unneth formula. We have Lemma \ref{lem-non-zero-lambda-inv} that restricts the possible insertions. We now need an equivalent of Lemma \ref{lem-genus-vertices-enum-case} to shrink the family of decomposition diagrams that have a non-zero multiplicity.

\begin{lem}\label{lem-genus-vertices-lambda-case}
Let $\PPP$ be a decomposition diagram with the insertion of a $\lambda$-class at each vertex, for which the multiplicity is non-zero. We then have the following:
\begin{enumerate}[label=(\roman*)]
\item the underlying graph $\PPP$ has genus $b_1(\PPP)\leqslant g_0$,
\item each flat vertex has $g_V=0$ and no $\lambda$-class insertion,
\item each non-flat vertex has genus $g_V\geqslant 1$ and an insertion $\lambda_{g_V-1}$.
\end{enumerate}
\end{lem}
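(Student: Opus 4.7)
The plan is to mirror the strategy used for Lemma \ref{lem-genus-vertices-enum-case}, but replacing the enumerative vanishing arguments with their $\lambda$-class analogs. After applying Proposition \ref{prop-decomposition-lambda-case}, I would expand every diagonal class $\delta_e$ via the K\"unneth formula. This turns the integral over $\prod_V \vir{\M_V}$ into a finite sum of products of local factors, one per vertex, each of which is a relative GW invariant of $X = \widetilde{E}\times\PP^1$ carrying a $\lambda_{j_V}$ insertion, one boundary cycle insertion at each adjacent flag, and (for the preferred vertices) a point insertion in the interior. For the multiplicity to be non-zero, all these local factors must individually be non-zero. So the argument reduces to determining which $(g_V,j_V,a_V)$ can possibly produce a non-vanishing local factor, then collecting the constraints.

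First I would analyze flat vertices. Here $a_V=0$, so the class is $b_V\sfp$. Parts (ii) and (iii) of Lemma \ref{lem-non-zero-lambda-inv} describe exhaustively the non-zero invariants in such a class: they all carry the insertion $\lambda_{g_V}$ (the top $\lambda$). However, Proposition \ref{prop-computation-fibers} computes these invariants and shows they vanish whenever $g_V>0$. Hence we must have $g_V=0$, which forces $j_V=0$ and so no $\lambda$-class is effectively inserted; this gives (ii).

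Next I would handle non-flat vertices. Here $a_V\neq 0$, and Lemma \ref{lem-vanishing-top-lambda-class} provides the pointwise vanishing $\lambda_{g_V}=0$ on $\M_{g_V,n_V}(X/D^\pm,\varpi_V,\mu_V)$, so necessarily $j_V\leq g_V-1$, which in turn requires $g_V\geq 1$. On the other hand, Lemma \ref{lem-non-zero-lambda-inv}(i) says that the only non-vanishing relative invariants in class $a\sfs+b\sfp$ with $a\neq 0$, a single interior point insertion and cycle insertions on the boundary are precisely the ones with $j_V=g_V-1$. Combining these two gives (iii).

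Finally (i) falls out by dimension-counting the $\lambda$ insertion. Since flat vertices contribute $j_V=0$ and non-flat vertices contribute $j_V=g_V-1\leq g_V$, summing the constraint $\sum_V j_V = g-g_0$ and using the genus formula $g = b_1(\PPP)+\sum_V g_V$ yields
\[
g-g_0 \;=\; \sum_V j_V \;\leq\; \sum_V g_V \;=\; g-b_1(\PPP),
\]
which simplifies to $b_1(\PPP)\leq g_0$. The main subtlety I expect is verifying that the K\"unneth expansion really does reduce the computation to a product of exactly the relative invariants classified by Lemmas \ref{lem-non-zero-lambda-inv} and \ref{lem-vanishing-top-lambda-class}, in particular keeping careful track of the cycle insertions on flags shared between adjacent vertices so that no pathological summand with $j_V>g_V-1$ sneaks in at a non-flat vertex; but this is essentially bookkeeping, as the same splitting argument underlies the enumerative analog.
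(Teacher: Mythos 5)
Your proposal is correct and follows essentially the same route as the paper: expand the diagonal classes so the integral splits into local relative invariants, then invoke Lemma \ref{lem-vanishing-top-lambda-class}, Lemma \ref{lem-non-zero-lambda-inv} and Proposition \ref{prop-computation-fibers} to force $g_V=j_V=0$ at flat vertices and $j_V=g_V-1$, $g_V\geqslant 1$ at non-flat ones. The only cosmetic difference is the order: the paper reads off $b_1(\PPP)\leqslant g_0$ directly from the constraint $\sum_V j_V=g-g_0$ with $j_V\leqslant g_V$ in Proposition \ref{prop-splitting-lambda-case}, whereas you derive the same inequality at the end — the two computations are identical.
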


\begin{proof}
Let $\PPP$ be a decomposition diagram with a $\lambda$-class $\lambda_{j_V}$ assigned to each vertex. We assume that the contribution is non-zero. We denote the genus of a vertex $V$ by $g_V$. First, as the $\lambda$-class insertion is $g-g_0$, by Proposition \ref{prop-decomposition-lambda-case}, we need to have $b_1(\PPP)\leqslant g_0$ for the contribution to be non-zero, proving (i). Moreover, after expanding the diagonal classes, the integral splits as a sum over the class insertions. We use the computations and Lemma \ref{lem-vanishing-top-lambda-class} to prove (ii) and (iii).
	\begin{itemize}[label=$\ast$]
	\item For a flat vertex, we need to have $j_V=g_V=0$ due to Proposition \ref{prop-computation-fibers} and the vanishing of invariants with a $\lambda$-class. Thus, we need to insert neutral class at the adjacent flags, and point classes at their other extremities.
	\item For a non-flat vertex, we have $j_V\leqslant g_V-1$ due to Lemma \ref{lem-vanishing-top-lambda-class}.
	\item A non-flat vertex needs to have a least one of the marked point, otherwise the invariant is always $0$ using the action of $\CC^*$ on the $\PP^1$ fibers of $X$. Furthermore, the dimension of $\vir{\M_V}$ at the given vertex is
	$$g_V+(n_V+1)-1.$$
	It needs to match the dimension of the insertions, which come from $\lambda_{j_V}$ of rank $j_V\leqslant g_V-1$, the point insertion of rank $2$, and the insertions $\gamma_{V,e}$ coming from the diagonal classes, whose ranks add up to at most $n_V-1$ thanks to \cite[Proposition 2.13]{blomme2021floor}. Thus, we get
	$$g_V+n_V=j_V+2+\sum_{e\ni V}\mathrm{rk}\gamma_{V,e} \leqslant j_V+2+n_V-1 \leqslant g_V+n_V.$$
	As we have equality, the rank of $\gamma_{V,e}$ add up to $n_V-1$ and the only possible value of $j_V$ for a non-flat vertex is actually $j_V=g_V-1$. 
	\end{itemize}
\end{proof}

Thus, the multiplicity associated to a decomposition graph and the $\lambda$-class $\lambda_{g-g_0}$ is
$$\frac{\prod_e w_e}{|\mathrm{Aut}(\PPP)|}\int_{\prod\vir{\M_V}} \prod_V\lambda_{g_V-1,\pi_V}\prod_e\delta_e\prod_1^n\ev_i^*(\pt).$$
The choice of genus at the non-flat vertices fully determines the decomposition of the $\lambda$-class. This is similar to the toric picture from \cite{bousseau2019tropical}, where the $\lambda$-class insertion used to fill the remaining constraints yields a top $\lambda$-class insertion at the vertex level. Here we also get a ``top" $\lambda$-class (equal to $\lambda_{g-1}$ due to the vanishing of $\lambda_g$). We gather the invariants by taking the generating series in $g$:
$$BG^S_{g_0,\varpi}(u) = \sum_{g\geqslant g_0} \gen{\lambda_{g-g_0};\pt^{g_0-1} }^S_{g,\varpi} u^{2g-2}.$$
Lemma \ref{lem-genus-vertices-lambda-case} proves that the genus at the non-flat vertices is at least $1$. Thus, we can restrict to the discerning pearl diagrams, provided we also sum over any possible genus addition to the non-flat vertices. We get the following multiplicity:
$$M(\PPP) = \frac{\prod_e w_e}{|\mathrm{Aut}(\PPP)|}
\sum_{g_V\geqslant 1} u^{2b_1(\PPP)+2\sum g_V-2}\int_{\prod\vir{\M_V}} \prod_V\lambda_{g_V-1,\pi_V}\prod_e\delta_e\prod_1^n\ev_i^*(\pt),$$ 
where the sum is over the assignments of genus at every non-flat vertex. Once expanded the diagonal classes and the integrals split, we show it is possible to factor this sum as a product over the vertices.

\subsubsection{Computation of the diagram multiplicity.} We now give a formula for the refined multiplicity of a discerning pearl diagram. As in the enumerative case, the multiplicity only depends on the underlying pearl diagram. Lemma \ref{lem-position-marked-flat-vertices-enum-case} still applies.

\begin{prop}\label{prop-computation-multiplicity-lambda-case}
		Let $\PPP$ be a pearl diagram. For each component $\PPP_k$ in the complement of marked flat vertices, let $\nu_k\in\ZZ\simeq\pi_1(\RR/\ZZ)$ be the class realized by its unique cycle. Through the change of variable $q=e^{iu}$, the multiplicity is given by
		$$M(\PPP)=\prod_k\tau(\nu_k)\prod_V \sfR_{a_V}(\mu_V) \prod_{E_\mathrm{flat}} w_e \prod_{E\backslash E_\mathrm{flat}} w_e^3,$$
		where the first product is over the the components $\PPP_k$, the second product over the non-flat vertices, third product over the edges adjacent to a flat vertex, and the last product over the edges not adjacent to a flat vertex.
\end{prop}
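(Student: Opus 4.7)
The plan is to follow the five-step template used in the proof of Proposition \ref{prop-computation-multiplicity}, systematically replacing each enumerative relative invariant by its $\lambda$-refined generating series. Starting from the decomposition formula (Proposition \ref{prop-decomposition-lambda-case}), I would first apply Lemma \ref{lem-genus-vertices-lambda-case} to restrict to decomposition diagrams in which every flat vertex has $g_V = j_V = 0$ and every non-flat vertex carries the insertion $\lambda_{g_V - 1}$. Summing over $g_V \geq 1$ at each non-flat vertex and reorganizing, the local contribution at such a vertex packages into the generating series $\sfR_{a_V\sfs + b_V\sfp}(\mu_V) = \mu_1^2\,\sfR_{a_V}(\mu_V)$ in the point-insertion case and $\widetilde{\sfR}_{a_V\sfs + b_V\sfp}(\mu_V)(\alpha,\beta) = -\mu_1\mu_2\,\sfR_{a_V}(\mu_V)$ in the $1$-cycle case, as given by Propositions \ref{prop-computation-bdry-inv-pt-case} and \ref{prop-computation-bdry-inv-1-cycle-case}. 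The sum then collapses onto pearl diagrams in the sense of Section \ref{sec-different-kinds-of-diagrams}.

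Next, I would verify that Lemma \ref{lem-position-marked-flat-vertices-enum-case} transfers verbatim: the same cut-and-prune argument shows the complement of flat vertices splits into components each carrying a unique cycle. The only inputs are the non-vanishing classifications, for which Lemma \ref{lem-non-zero-lambda-inv} replaces Lemma \ref{lem-non-zero-enum-inv}, and the fiber-class values, which come from Proposition \ref{prop-computation-fibers} (the vanishing of $\lambda_g$ on fiber classes forces $g_V = 0$ at every bivalent vertex being pruned or cut, producing the same numerical values as in the enumerative case).

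I would then rerun Steps 1 through 5 of the proof of Proposition \ref{prop-computation-multiplicity}. Cutting a marked flat vertex contributes $w_e^2$ via $\gen{\lambda_0;\pt_0,1_{-w},1_w}^{X/D^\pm}_{0,w\sfp} = 1$; pruning an unmarked flat vertex is neutralized against its automorphism as before; pruning a non-flat branch vertex contributes $\mu_1^2\,\sfR_{a_V}(\mu_V)$, yielding the desired $w_e^3 \cdot \sfR_{a_V}(\mu_V)$ factor. Around each cycle the four propagating insertion patterns $(1,\pt)$ and $(\alpha,\beta)$ survive by the refined non-vanishing lemma, with local invariants $\mu_1^2\,\sfR_{a_V}(\mu_V)$ and $-\mu_1\mu_2\,\sfR_{a_V}(\mu_V)$ respectively. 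The sign and monodromy bookkeeping is identical to the enumerative proof, and the four contributions combine into the trace factor $\tau(\nu_k) = 2 - \mathrm{tr}\, M^{\nu_k}$ at each cycle.

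The step I expect to require the most care is not conceptual but rather the bookkeeping of the formal variable: each $\sfR_{a_V}(\mu_V)$ absorbs the sum $\sum_{g_V \geq 1} u^{2g_V - 2 + n_V}$ at the vertex, while the total genus of a stable map encoded by a pearl diagram is $g = b_1(\PPP) + \sum_V g_V$, so one must confirm that after taking products the total exponent of $u$ matches $2g - 2$ as required by the definition of $BG^S_{g_0,\varpi}$. Once the edges (which contribute to neither $g$ nor the vertex $u$-powers) are tracked, the match is automatic, and the claimed product formula drops out.
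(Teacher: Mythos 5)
Your proposal is correct and follows essentially the same route as the paper: the paper's proof likewise reruns the five steps of Proposition \ref{prop-computation-multiplicity}, replacing each enumerative local invariant by the generating series $\sfR_{a_V}(\mu_V)$ via Propositions \ref{prop-computation-bdry-inv-pt-case} and \ref{prop-computation-bdry-inv-1-cycle-case}, with Lemmas \ref{lem-genus-vertices-lambda-case} and \ref{lem-non-zero-lambda-inv} playing the roles you assign them. Your closing check that the $u$-exponents $\sum_V(2g_V-2+n_V)$ assemble to $2g-2$ is a detail the paper leaves implicit, and it does indeed work out since $\sum_{V \text{ non-flat}}(n_V-2)=2b_1(\PPP)-2$.
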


\begin{proof}
We proceed as in the proof of Proposition \ref{prop-computation-multiplicity}.

\textbf{Step 1: reducing down to the cycles.} The first step is verbatim to the enumerative case: we prune the branches adjacent to the cycles. We start by cutting the marked flat vertices. The unmarked flat vertices can be removed. When pruning a non-flat vertex, we get
$$\gen{\lambda_{g_V-1};\pt_0,1_{\mu_1},\pt_{\mu_2},\cdots,\pt_{\mu_n}}^{X/D^\pm}_{g_V,a_V\sfs+b_V\sfp}.$$
According to the computation from Proposition \ref{prop-computation-bdry-inv-pt-case}, we get $\mu_1^2$ times the coefficient of $\sfR_{a_V}(\mu_V)$ of degree $2g_V-2+n_V$.

\smallskip

\textbf{Step 2: the monodromy around the loop.} The latter has already been determined in the proof of Proposition \ref{prop-computation-multiplicity}.

\smallskip

\textbf{Step 3: taking care about each cycle.} As in the enumerative case, we need to replace each diagonal class by its K\"unneth decomposition, expand and split the integral over the vertices.

\smallskip

\textbf{Step 4: finding the insertions with non-zero contribution.} As in the enumerative case, the choice of the insertion $\gamma_1^+$ fully determines the remaining insertions. Thus, we get four terms depending on the value of $\gamma_1^+=1,\alpha,\beta$ or $\pt$.

\smallskip

\textbf{Step 5: gathering the contributions.} We proceed as in the enumerative case to deal with the remaining cycles: for each possible choice of insertions at the flag $\gamma_1^+$ at a fixed point over the cycle, the remaining insertions are fully determined, and we can split over the vertices.
	\begin{itemize}[label=$\ast$]
	\item If we insert the point class or the neutral class, after making the generating series over the possible genus of vertices, we get
	$$\prod_e w_e^2\prod_V\sfR_{a_V}(\mu_V),$$
	for each of them, where the first product is over the edges of the cycle, and the second product over its vertices.
	\item If we insert instead the class $\alpha$ or $\beta$, we can still make the generating series over the possible genus and use the skew-symmetry. The vertex contribution at a vertex $V$ is
	$$-\mu_1\mu_2\sfR_{a_V}(\mu_V).$$
	As in the enumerative case, if the vertex is the $j$-th vertex on the cycle, $-\mu_1\mu_2=\pm w_jw_{j+1}$ with a negative sign if and only if the flags lie on the same side of the vertex. As there are an even number of such vertices, they cancel each other, and we also get
	$$-\mathrm{tr}(M^\nu)\prod_e w_e^2\prod_V \sfR_{a_V}(\mu_V).$$
	\end{itemize}
\end{proof}

\section{Regularity of the generating series}

In this section, we prove the quasi-modularity of the generating series of Gromov-Witten invariants for bielliptic surfaces. Let $S$ be a bielliptic surface of type $(a)$, $(b)$, $(c)$ or $(d)$, and $n=2,3,4$ or $6$ accordingly. We consider the following double generating series:
$$F_g^S(\sfp,\sfq)=\sum_{a,b}\gen{\pt^{g-1}}^S_{g,a\sfe+b\sff}\sfp^a \sfq^b.$$

\begin{theo}\label{theo-quasi-modularity-GW-invariants}
There exists a finite collection of graphs $\Gamma$ with each assigned: a quasi-modular form $A_\Gamma(\sfp)$ for $SL_2(\ZZ)$, a quasi-modular-form $B_\Gamma(\sfq)$ for $\Gamma_1(n)$, such that
$$F_g^S(\sfp,\sfq)=\sum_\Gamma A_\Gamma(\sfp) B_\Gamma(\sfq).$$
In particular, the generating series is quasi-modular in $\sfp$ for $SL_2(\ZZ)$, and in $\sfq$ for $\Gamma_1(n)$ (See below for definition of the latter).
\end{theo}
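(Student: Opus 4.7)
The plan is to exploit the pearl diagram formula from Proposition \ref{prop-computation-multiplicity}: writing $F_g^S(\sfp,\sfq) = \sum_\PPP m(\PPP)\sfp^{\sum_V a_V}\sfq^{\sum_e w_e h_e}$, I group pearl diagrams according to their underlying combinatorial type $\Gamma$, namely the labeled oriented graph together with the information of which vertices are flat. Since the genus $g$ and the valency and genus constraints of Definition \ref{def-pearl-diagram} bound the size of the graph, there are only finitely many such types. For each fixed $\Gamma$, the remaining data consists of the integer decorations $(a_V)$ at the non-flat vertices and $(w_e, h_e)$ at the edges, subject to the balancing of weights at each vertex and to the lower bounds on $h_e$ prescribed by condition (H').

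The key observation is that the multiplicity factorizes as
$$m(\PPP) \;=\; \Bigl(\prod_V a_V^{n_V-1}\sigma_1(a_V)\Bigr)\Bigl(\prod_k \tau_n(h_{\gamma_k})\prod_e w_e^{\epsilon_e}\Bigr), \qquad \epsilon_e \in \{1,3\},$$
and that the $(a_V)$'s only feed into the first factor and into $\sfp^{\sum a_V}$, while the $(w_e,h_e)$'s only feed into the second factor and into $\sfq^{\sum w_e h_e}$. Hence the contribution of the type $\Gamma$ splits as $A_\Gamma(\sfp)\,B_\Gamma(\sfq)$, where
$$A_\Gamma(\sfp) \;=\; \prod_V \sum_{a\geqslant 1} a^{n_V-1}\sigma_1(a)\,\sfp^a, \qquad B_\Gamma(\sfq) \;=\; \sum_{(w_e,h_e)}\prod_k\tau_n(h_{\gamma_k})\prod_e w_e^{\epsilon_e}\,\sfq^{\sum_e w_e h_e},$$
the inner sum in $B_\Gamma$ being constrained by the balancing on weights. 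Each factor in $A_\Gamma(\sfp)$ equals $-\tfrac{1}{24}D^{n_V-1}\bigl(E_2(\sfp)-1\bigr)$ with $D=\sfp\,\partial_\sfp$, hence is quasi-modular of weight $2n_V$ for $SL_2(\ZZ)$; the product $A_\Gamma$ is then quasi-modular for $SL_2(\ZZ)$.

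To handle $B_\Gamma(\sfq)$, I expand $\tau_n$ in its finite Fourier basis over $\ZZ/n\ZZ$, writing $\tau_n(h) = \sum_{j\in\ZZ/n\ZZ}c_j\zeta^{jh}$ with $\zeta = e^{2\pi i /n}$. Rewriting each cycle-sum $h_{\gamma_k}$ as a signed sum of the edge heights $h_e$, the product $\prod_k \tau_n(h_{\gamma_k})$ becomes a finite combination of characters of the form $\prod_e \zeta^{\nu_e h_e}$. Consequently $B_\Gamma$ is a finite linear combination of graph generating series of the shape $\sum_{(w_e,h_e)}\prod_e w_e^{\epsilon_e}\zeta^{\nu_e h_e}\sfq^{w_e h_e}$ subject to weight balancing; these are precisely the graph sums considered in Theorem \ref{theo-quasi-modularity-graph-sums}, which identifies each of them with a Fourier coefficient of an explicit (quasi-)Jacobi form associated to $\Gamma$ at elliptic variables specialized to $n$-th roots of unity. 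The quasi-modularity of such specialized Fourier coefficients for $\Gamma_1(n)$ is provided by Theorem \ref{theo-quasi-modularity-periods-multivariable}, and yields the desired property for $B_\Gamma$. The main technical difficulty is concentrated in those two auxiliary theorems, especially in verifying that evaluating the elliptic variables at primitive $n$-th roots of unity translates exactly to $\Gamma_1(n)$-congruence behavior; once they are granted, the decomposition above delivers the stated structural formula for $F_g^S$.
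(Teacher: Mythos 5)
Your proposal is correct and follows essentially the same route as the paper's proof: group pearl diagrams by their underlying finite graph, factor the multiplicity so that the $(a_V)$-sum produces $\prod_V D^{n_V-1}E_2(\sfp)$ (quasi-modular for $SL_2(\ZZ)$) and the $(w_e,h_e)$-sum is a finite linear combination of the congruence-constrained graph sums of Theorem \ref{theo-quasi-modularity-graph-sums}. The only cosmetic difference is that you expand $\tau_n$ in additive characters of $\ZZ/n\ZZ$ while the paper uses indicator functions of congruence classes (and, for loop edges, of $(\ZZ/n\ZZ)^\times$-orbits, which is the form the hypotheses of Theorem \ref{theo-quasi-modularity-graph-sums} literally require there); since the two bases are related by a finite Fourier transform and $\tau_n$ is orbit-constant, the arguments coincide.
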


The graphs over which we sum are actually the pearl diagrams if we forget about the orientation, the weight and heights of the edges along with the homology classes assigned to the vertices. Generating series are recovered by summing over the possible assignments.

\smallskip

The section is organized as follows. In Section \ref{sec-jacobi-forms}, we introduce (quasi-)Jacobi forms for congruence subgroups of $SL_2(\ZZ)$ and prove quasi-modularity results concerning their Fourier coefficients. These results are probably already known to experts on modular forms. We still include them since finding references on congruence subgroups in the litterature is a little bit hard. The main result of Section \ref{sec-generating-series-graphs-sums} is Theorem \ref{theo-quasi-modularity-graph-sums}, that generalizes \cite[Theorem 6.1]{goujard2019counting}, dealing with generating series of graphs, by allowing us to impose congruence conditions on the height of edges. We then use Theorem \ref{theo-quasi-modularity-graph-sums} to prove Theorem \ref{theo-quasi-modularity-GW-invariants} and give some explicit computations.

	\subsection{Quasi-modular and quasi-Jacobi forms}
	\label{sec-jacobi-forms}
	
	We give in this section definitions about quasi-Jacobi forms along with some properties allowing us to construct quasi-modular forms from the latter. This builds on the work from E. Goujard and M. M\"oller \cite{goujard2019counting} and the more direct approach provided by G. Oberdieck and A. Pixton \cite{oberdieck2018holomorphic}. The difference is that here we are studying modularity properties with respect to some congruence subgroups of $SL_2(\ZZ)$. Though we follow the same steps, this difference explains the technicalities we meet along the way.
	
	\subsubsection{Modular group and congruence subgroups.} We consider the following subgroup of $SL_2(\ZZ)$, called congruence subgroup:
	$$\Gamma_1(n)=\left\{ g\in SL_2(\ZZ) \text{ s.t. }g\equiv\begin{pmatrix}
	1 & \ast \\ 0 & 1 \\
	\end{pmatrix} \modulo n \right\}.$$
	
	\begin{rem}
	We also have the congruence subgroup
	$$\Gamma_0(n)=\left\{ g\in SL_2(\ZZ) \text{ s.t. }g\equiv\begin{pmatrix}
	\ast & \ast \\ 0 & \ast \\
	\end{pmatrix} \modulo n \right\}.$$
	The difference is that diagonal elements can realize different invertible classes modulo $n$. For the $n$ of interest to us, \textit{i.e.} $n=2,3,4,6$, the only other invertible element is $-1$. Thus $\Gamma_0(n)$ is generated by $\Gamma_1(n)$ and $-I_2$, so that it does not matter for quasi-modularity properties. This is why we only care about $\Gamma_1(n)$. Generalizations of most statements to $\Gamma_0(n)$ are more technical.
	\end{rem}
	
	The group $SL_2(\ZZ)$, also called modular group, acts on $\ZZ^2$. It is thus possible to consider the semi-direct product $\ZZ^2\rtimes SL_2(\ZZ)$. If $\Lambda\subset\ZZ^2$ is a lattice preserved by a subgroup $\Gamma$ of $SL_2(\ZZ)$, then $\Lambda\rtimes\Gamma$ is a subgroup of $\ZZ^2\rtimes SL_2(\ZZ)$.
	
	\begin{expl}
	The lattice $\ZZ\oplus n\ZZ$ is preserved by $\Gamma_1(n)$, so that we can consider
	$$\overline{\Gamma}_1(n)=(\ZZ\oplus n\ZZ)\rtimes\Gamma_1(n).$$
	\end{expl}
	
	\begin{rem}
	A quasi-modular form of weight $w$ and depth $p$ for $\Gamma\subset SL_2(\ZZ)$ is a holomorphic function $f:\HH\to\CC$ such that there exists holomorphic functions $f_0=f,f_1,\dots,f_p$ and for any $\abcd\in\Gamma$:
	$$(c\tau+d)^{-w}f\left( \tabcd \right)=\sum_0^p f_i(\tau)\left(\frac{c}{c\tau+d}\right)^i.$$
	We denote by $\mathrm{QMod}_w(\Gamma)$ the set of quasi-modular forms of weight $w$ for $\Gamma\subset SL_2(\ZZ)$. We have a graded ring $\mathrm{QMod}(\Gamma)=\bigoplus_w \mathrm{QMod}_w(\Gamma)$. As these functions are $1$-periodic, they can be written in the variable $\sfq=e^{2i\pi\tau}$. The graded ring $\mathrm{QMod}(SL_2(\ZZ))$ is known to be generated by the Eisenstein series $G_2,G_4$ and $G_6$.
	\end{rem}
	
	\subsubsection{One and multi-variable quasi-Jacobi forms.} Before going to the multivariable case, we start defining a class of functions on $\CC\times\HH$ which satisfy some transformation property under the action of the groups defined above.
	
	\begin{defi}
	We say that $f:\CC\times\HH\to\CC$ is a quasi-Jacobi form of weight $w$ and depth $p$ for $\Lambda\rtimes\Gamma\subset\ZZ^2\rtimes SL_2(\ZZ)$ if it satisfies the following:
	\begin{enumerate}[label=(\roman*)]
	\item it is meromorphic,
	\item it is elliptic in $z$: for every $(\lambda,\mu)\in\Lambda\subset\ZZ^2$, we have
	$f(z+\lambda+\mu\tau;\tau)=f(z;\tau)$,
	\item it is quasi-modular in $\tau$: there exists meromorphic functions $f_i:\CC\times\HH\to\CC$ such that for any $\abcd\in\Gamma\subset SL_2(\ZZ)$, we have
	$$(c\tau+d)^{-w}f\left(\frac{z}{c\tau+d};\tabcd\right)=\sum_{i=0}^p f_i(z;\tau)\left(\frac{c}{c\tau+d}\right)^i.$$
	\end{enumerate}
	\end{defi}
	
	The definition is extended to the case of functions $f:\CC^N\times\HH\to\CC$ by taking instead of $z$ a tuple of complex numbers and requiring the periodicity in each of the coordinates.
	
	\begin{rem}
	Sometime, we may omit the argument $\tau$ when writing the functions.
	\end{rem}
	
	\begin{expl}\label{expl-quasi-jacobi-forms}
	\begin{itemize}[label=$\ast$]
	\item The Weierstrass function $\wp$ satisfies the equation
	$$(c\tau+d)^{-2}\wp\left(\frac{z}{c\tau+d},\tabcd\right)=\wp(z;\tau).$$
	In that case, since the depth $p$ is $1$, we say that it is a Jacobi form for $\ZZ^2\rtimes SL_2(\ZZ)$ and not only quasi-Jacobi form. It has a unique pole of order $2$ at $z\equiv 0\modulo 1,\tau$, where we have the following Laurent development:
	$$\frac{1}{(2i\pi)^2}\wp(z;\tau)=\frac{1}{(2i\pi z)^2}+\sum_{2}^\infty (2l-1)2lG_{2l}(\tau)(2i\pi z)^{2l-2},$$
	which may be taken as a definition of the Eisenstein series $G_{2l}$.
	\item By differentiating with respect to the $z$ variable, we get
	$$(c\tau+d)^{-s}\wp^{(s-2)}\left(\frac{z}{c\tau+d},\tabcd\right)=\wp^{(s-2)}(z;\tau),$$
	which is thus a Jacobi form of weight $s$. More generally, the derivative of a quasi-Jacobi form of weight $w$ is a quasi-Jacobi form of weight $w+1$.
	\item When computing generating series of graphs in Section \ref{sec-generating-series-graphs-sums}, we need to consider shifts of the Weierstrass function: for fixed $n$ and a given $k\in\ZZ/n\ZZ$, we set
	$$\wp_k(z;\tau)=\wp(z-k\tau;n\tau).$$
	It is only $n\tau$-periodic and not $\tau$-periodic. Moreover, it satisfies the following transformation law: if $\abcd\in\Gamma_0(n)$, using the modularity for $\wp$ with $\left(\begin{smallmatrix} a & nb \\ c/n & d \\ \end{smallmatrix}\right)$, we have
	$$(c\tau+d)^{-2}\wp_k\left(\frac{z}{c\tau+d};\tabcd\right)=\wp_{ak}(z;\tau).$$
	In particular, if $a\equiv 1\modulo n$, we get that $\wp_k$ is a Jacobi form for $\overline{\Gamma}_1(n)$, but not a priori for $\overline{\Gamma}_0(n)$. This is the main source of technicalities for the $\Gamma_0(n)$ case.
	\end{itemize}
	\end{expl}
	
	In particular, for each fixed value of $\tau$, a quasi-Jacobi form induces a meromorphic function on an elliptic curve $E_\tau$, which is $\CC/\gen{1,\tau}$ in the case of $\ZZ^2\rtimes SL_2(\ZZ)$, and $\CC/\gen{1,n\tau}$ in the case of $\overline{\Gamma}_1(n)$.
	
	\subsubsection{Log-derivative of the Jacobi $\theta$-function.} We will need the following auxiliary function. We refer to \cite[Lemma 18]{oberdieck2018holomorphic} for a precise definition. We denote it by $\sfA$. It satisfies
	$$\sfA(z+1;\tau)=\sfA(z;\tau),$$
	$$\sfA(z+\tau;\tau)=\sfA(z;\tau)-1.$$
It is not a quasi-Jacobi form since it is only $1$-periodic, and has no periodicity in the $\tau$-direction. The function has poles at $0$ modulo $1,\tau$. We have the following Laurent development at $0$:
	$$\sfA(z;\tau)=\frac{1}{2i\pi z}-\sum_{l=1}^\infty 2lG_{2l}(\tau)(2i\pi z)^{2l-1}.$$
	It is possible to show that it satisfies a transformation law under $SL_2(\ZZ)$:
	$$(c\tau+d)^{-1}\sfA\left(\frac{z}{c\tau+d};\tabcd\right)=\sfA(z;\tau)-z\frac{c}{c\tau+d}.$$
	Moreover, $\frac{1}{2i\pi}\frac{\partial}{\partial z}\sfA(z;\tau)=-\wp(z;\tau)-2 G_2(\tau)$.
	
	\medskip
	
	\subsubsection{Some rings.} We now consider some rings of quasi-Jacobi forms in one variable:
	\begin{itemize}[label=$\ast$]
	\item Let $\J$ be the ring of Jacobi forms for $\ZZ^2\rtimes SL_2(\ZZ)$ with at most poles at $0\modulo 1,\tau$. It is in fact generated by the quasi-modular forms for $SL_2(\ZZ)$ (\textit{i.e.} by the Eisenstein series $G_2,G_4,G_6$) and the derivatives of the Weierstrass function $\wp$. This fact is a special case of the result proven in \cite[Section 5]{goujard2019counting}.
	\item Let $\J(\Gamma_1(n))$ be the ring of quasi-Jacobi forms for $\overline{\Gamma}_1(n)$ having at most poles at $0\modulo 1,\tau$, \textit{i.e.} the $k\tau$ modulo $1,n\tau$ and $k\in\ZZ/n\ZZ$.
	\end{itemize}
	
	These rings are graded by the weight of the forms, and we denote by $\J^{(w)}$ the graded pieces.
	
	\begin{expl}
	All the quasi-Jacobi forms considered in example \ref{expl-quasi-jacobi-forms} belong to these rings.
	\end{expl}
	
Now we give some rings of several variables Jacobi forms.
	\begin{itemize}[label=$\ast$]
	\item Let $\JJJ_N$ be the ring of $N$ variables Jacobi forms for $\ZZ^2\rtimes SL_2(\ZZ)$ generated by $G_2$, $G_4$ and $G_6$ along with $\wp^{(s)}(z_i-z_j;\tau)$. They have poles at most on the diagonals $z_i\equiv z_j\modulo 1,\tau$.
	\item Let $\JJJ_N(\Gamma_1(n))$ be the ring of $N$ variables quasi-Jacobi forms generated by quasi-modular forms for $\Gamma_1(n)$ and the shifts $\wp^{(s)}(z_i-z_j-k\tau;n\tau)$. They have at most poles along the diagonals $z_i- z_j \equiv k\tau\modulo 1,n\tau$.
	\end{itemize}
	
	\subsubsection{Quasi-modular forms out of Quasi-Jacobi forms.} We now follow the steps from \cite[Appendix]{oberdieck2018holomorphic} and prove a quasi-modularity results for periods and coefficients of quasi-Jacobi forms. We start with the coefficients in the Laurent development of a quasi-Jacobi form, and the one variable case before getting to the multivariable setting. In each case, we state the result for $SL_2(\ZZ)$, already known from \cite{oberdieck2018holomorphic} and \cite{goujard2019counting}, and its analog for the congruence subgroups $\Gamma_1(n)$.
	
	\begin{lem}\label{lem-quasi-modularity-Laurent-coefficients}
	We have the following:
	\begin{enumerate}[label=(\roman*)]
	\item If $f\in\J^{(w)}$ and $f(z;\tau)=\sum_r c_r(\tau)z^r$, then $c_r(\tau)\in \mathrm{QMod}_{w+r}(SL_2(\ZZ))$.
	\item If $f\in\J^{(w)}(\Gamma_1(n))$ and $f(k\tau+\varepsilon;\tau)=\sum_r c_{k,r}(\tau)\varepsilon^r$, then $c_{k,r}(\tau)\in\mathrm{QMod}_{w+r}(\Gamma_1(n))$.
	\end{enumerate}
	\end{lem}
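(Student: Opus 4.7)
The plan for both parts is identical: I will insert the Laurent expansion of $f$ around the distinguished point directly into the defining transformation law of a quasi-Jacobi form, and match coefficients of the local coordinate on both sides. The Laurent coefficients of the companions $f_i$ at the same point will serve as the companions of $c_r$ (resp. $c_{k,r}$) in the resulting quasi-modular transformation.

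\textbf{Part (i).} Writing $f(z;\tau)=\sum_r c_r(\tau)z^r$ and $f_i(z;\tau)=\sum_r f_{i,r}(\tau)z^r$, the quasi-Jacobi transformation law becomes, after substituting $z\mapsto z/(c\tau+d)$,
\[
\sum_r\frac{c_r\!\left(\tabcd\right)}{(c\tau+d)^{w+r}}z^r \;=\; \sum_r\left(\sum_{i=0}^{p} f_{i,r}(\tau)\left(\frac{c}{c\tau+d}\right)^{\!i}\right)z^r.
\]
Equating the coefficient of $z^r$ on the two sides yields precisely the quasi-modular transformation of $c_r$ of weight $w+r$ and depth $\leqslant p$, with companions $f_{i,r}(\tau)$.

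\textbf{Part (ii).} The same idea applies with local coordinate $\varepsilon=z-k\tau$. Fix $g=\abcd\in\Gamma_1(n)$, set $\tau'=\tabcd$, and recall $c\equiv 0$, $a\equiv d\equiv 1\pmod n$. The key preliminary is to show that $z\mapsto z/(c\tau+d)$ sends the pole $z=k\tau$ to a point congruent to $k\tau'$ modulo the lattice $\ZZ\oplus n\tau'\ZZ$ of periods of $f(\cdot;\tau')$. Equivalently, one seeks integers $j,m$ with $k\tau - k(a\tau+b) = j(c\tau+d) + mn(a\tau+b)$; separating $\tau$-coefficient and constant term gives the pair $k(1-a)=jc+mna$ and $-kb=jd+mnb$, which admits the explicit integer solution $j=-bk$, $m=k(d-1)/n$, with $m\in\ZZ$ precisely because $d\equiv 1\pmod n$. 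Using the $(\ZZ\oplus n\tau'\ZZ)$-periodicity of $f(\cdot;\tau')$, and setting $f_i(k\tau+\varepsilon;\tau)=\sum_r f_{i,k,r}(\tau)\varepsilon^r$, the transformation law then reads
\[
(c\tau+d)^{-w}\sum_r c_{k,r}(\tau')\,\frac{\varepsilon^r}{(c\tau+d)^r} \;=\; \sum_r\left(\sum_{i=0}^{p} f_{i,k,r}(\tau)\left(\frac{c}{c\tau+d}\right)^{\!i}\right)\varepsilon^r.
\]
Matching coefficients of $\varepsilon^r$ produces the quasi-modular transformation of $c_{k,r}$ of weight $w+r$ for $\Gamma_1(n)$.

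\textbf{The main obstacle.} The only non-routine step is the pole-preservation check in Part (ii). This is where restricting to $\Gamma_1(n)$ (rather than $\Gamma_0(n)$) becomes essential: for $\Gamma_0(n)$ the diagonal entries need only be invertible modulo $n$, and the label $k$ would transform as $k\mapsto ak\pmod n$, forcing one to assemble the $c_{k,r}$ with $k$ in a full $(\ZZ/n\ZZ)^\times$-orbit into a vector-valued quasi-modular form rather than proving scalar quasi-modularity for each $k$ individually.
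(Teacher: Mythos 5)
Your proposal is correct and follows essentially the same route as the paper: substitute the Laurent expansion at the pole into the quasi-Jacobi transformation law and identify coefficients of the local coordinate. The only (cosmetic) difference is in the lattice bookkeeping for part (ii): the paper evaluates the left-hand side at $k\tau'+\varepsilon/(c\tau+d)$, lands at the pole $ak\tau$ on the right, and uses $a\equiv 1\pmod n$, whereas you track $k\tau/(c\tau+d)$ modulo $\ZZ\oplus n\tau'\ZZ$ and use $d\equiv 1\pmod n$ — these are equivalent since $ad\equiv 1\pmod n$, and your explicit solution $j=-bk$, $m=k(d-1)/n$ checks out.
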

	
	\begin{proof}
	The quasi-modularity comes from the quasi-modularity for $f$, and identification of coefficients in the Laurent development. Let $f$ be a quasi-Jacobi form, and let $\abcd\in SL_2(\ZZ)$ for which it is quasi-modular. To avoid burdening notations, assume $f$ is in fact a Jacobi form. Let $k\tau$ be a pole of $f$. We write
	$$f(k\tau+\varepsilon;\tau)=\sum_{r=-M}^\infty c_{k,r}(\tau)\varepsilon^r.$$
	Thus, we get
	\begin{align*}
	(c\tau+d)^{-w}f\left(k\tabcd +\frac{\varepsilon}{c\tau+d};\tabcd\right) = & (c\tau+d)^{-w}\sum_{-M}^\infty c_{k,r}\left(\tabcd\right)\left(\frac{\varepsilon}{c\tau+d}\right)^r \\
	= & f(ak\tau+kb+\varepsilon;\tau) \\
	= & \sum_{-M}^\infty c_{ak,r}(\tau)\varepsilon^r. \\
	\end{align*}
	Identifying coefficients, we get
	$$(c\tau+d)^{-w-r}c_{k,r}\left(\tabcd\right)=c_{ak,r}(\tau).$$
	\begin{enumerate}[label=(\roman*)]
	\item For the first case we take $k=0$ and $\abcd\in SL_2(\ZZ)$ to conclude.
	\item For $\abcd\in\Gamma_1(n)$, we have $a\equiv 1\modulo n$, so that we also get the quasi-modularity.
	\end{enumerate}
	\end{proof}
	
	It is possible to relate periods of quasi-Jacobi forms to coefficients in the Laurent development, thus showing that they are also quasi-modular forms thanks to Lemma \ref{lem-quasi-modularity-Laurent-coefficients}. For a quasi-Jacobi form $f\in\J$, we define its constant term by
	$$[f]_0=\int_{\sfC_\alpha}f(z;\tau)\dd z,$$
	where $\sfC_\alpha$ is the path $t\in [0;1]\mapsto t+i\alpha$, and $0<\alpha<\Im \tau$. Due to the periodicity condition and the fact that $f(z;\tau)\dd z$ is closed, the result does not depend on the choice of $\alpha$. It is actually a period of the meromorphic form $f(z;\tau)\dd z$ on the elliptic curve $E_\tau$. For quasi-Jacobi forms for $\overline{\Gamma}_i(n)$, the result depends on $a\modulo n\Im\tau$ since we have poles not only at $0$. Thus, we define
	$$[f]_k=\int_{\sfC_\alpha}f(z;\tau)\dd z,$$
	with $k\Im\tau< \alpha <(k+1)\Im\tau$.

	\begin{lem}\label{lem-quasi-modularity-period}
	We have the following:
	\begin{enumerate}[label=(\roman*)]
	\item If $f\in\J^{(w)}$, then $[f]_0\in\mathrm{QMod}_w(SL_2(\ZZ))$.
	\item If $f\in\J^{(w)}(\Gamma_1(n))$,  then for any $k$, $[f]_k\in\mathrm{QMod}_w(\Gamma_1(n))$.
	\end{enumerate}
	\end{lem}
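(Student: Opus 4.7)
The strategy is to express $[f]_k$ in terms of Laurent coefficients of $f$, which are already quasi-modular by Lemma \ref{lem-quasi-modularity-Laurent-coefficients}, and periods of a small list of model meromorphic functions whose periods can be computed explicitly via the Weierstrass $\zeta$-function.

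For part (i), since $f\in\J^{(w)}$ is $(1,\tau)$-elliptic with poles only at $0\modulo 1,\tau$, the residue theorem on the torus $\CC/\gen{1,\tau}$ forces $\mathrm{Res}_{z=0}f=0$; equivalently $c_{-1}(\tau)=0$. Because $\wp^{(s)}(z;\tau)$ has a pole of order $s+2$ at $z=0$, one recursively matches principal parts to obtain a decomposition
$$f(z;\tau)=\sum_{s\geqslant 0}\alpha_s(\tau)\,\wp^{(s)}(z;\tau)+h(\tau),$$
where $\alpha_s$ and $h$ are polynomial expressions in the Laurent coefficients of $f$ at $0$ and in Eisenstein series (the latter coming from the regular part of $\wp^{(s)}$ at $0$), and the remainder is holomorphic and doubly periodic, hence constant in $z$. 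Lemma \ref{lem-quasi-modularity-Laurent-coefficients} places $\alpha_s,h\in\mathrm{QMod}(SL_2(\ZZ))$. Integrating along $\sfC_\alpha$, we get $[\wp^{(s)}]_0=0$ for $s\geqslant 1$ since $\wp^{(s-1)}$ is $1$-periodic, while $[\wp]_0=\zeta(i\alpha;\tau)-\zeta(1+i\alpha;\tau)=-\eta_1(\tau)$ is a scalar multiple of $G_2(\tau)$, quasi-modular of weight $2$. Hence $[f]_0=-\eta_1\alpha_0+h\in\mathrm{QMod}_w(SL_2(\ZZ))$.

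For part (ii), the same scheme works after two modifications. First, $f$ may now have poles at every $j\tau$ with $0\leqslant j<n$, so the shifted forms $\wp_j$ and their derivatives enter the decomposition. Second, the residues $c_{j,-1}(\tau)$ need not vanish individually (only their sum does), so one must enlarge the model list by auxiliary $(1,n\tau)$-elliptic functions $\Phi_j(z;\tau)$ for $1\leqslant j<n$ with residue $+1$ at $j\tau$ and $-1$ at $0$, constructed from differences of Weierstrass $\zeta$-functions together with a constant correction dictated by the Legendre relation. Matching Laurent expansions at each pole yields
$$f(z;\tau)=\sum_{j,s}\alpha_{j,s}(\tau)\,\wp_j^{(s)}(z;\tau)+\sum_{j=1}^{n-1}\beta_j(\tau)\,\Phi_j(z;\tau)+h(\tau),$$
with all coefficients in $\mathrm{QMod}(\Gamma_1(n))$ by Lemma \ref{lem-quasi-modularity-Laurent-coefficients}. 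Periods of the $\wp_j^{(s)}$ are handled as in (i), now in terms of $G_2(n\tau)$, quasi-modular for $\Gamma_0(n)\supset\Gamma_1(n)$, and the $\Phi_j$ contribute further explicitly quasi-modular periods. The passage from $[f]_0$ to an arbitrary $[f]_k$ is handled by observing that consecutive periods differ by $2\pi i$ times a residue $c_{j,-1}(\tau)$, itself quasi-modular by Lemma \ref{lem-quasi-modularity-Laurent-coefficients}, so quasi-modularity propagates from $[f]_0$ to all $[f]_k$.

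The main technical obstacle is the construction and modular analysis of the simple-pole model functions $\Phi_j$: one must verify that a suitable correction constant makes them genuinely $(1,n\tau)$-periodic, and that under an element $\abcd\in\Gamma_1(n)$ each $\Phi_j$ transforms into a quasi-Jacobi multiple of $\Phi_{aj\modulo n}$ plus lower-weight corrections. The closure of the model family under the $\Gamma_1(n)$-action relies precisely on $a\equiv 1\modulo n$, which is exactly why the statement is given for $\Gamma_1(n)$ rather than for $\Gamma_0(n)$.
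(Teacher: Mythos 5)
Your part (i) is correct and takes a genuinely different route from the paper: you decompose $f$ into derivatives of $\wp$ plus a constant by matching principal parts (using that the residue at the unique pole vanishes) and integrate term by term, whereas the paper multiplies $f$ by the quasi-periodic function $\sfA$ and applies the residue theorem on the cylinder, which expresses $n[f]_0$ directly as a universal combination of the Laurent coefficients $c_{k,r}$ and of Eisenstein series. For (i) the two arguments are essentially equivalent in content.

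For part (ii) there is a genuine gap, located exactly where you flag the ``main technical obstacle'' but not for the reason you give. The functions $\zeta(z-j\tau;\gen{1,n\tau})-\zeta(z;\gen{1,n\tau})$ are automatically $(1,n\tau)$-elliptic (the quasi-period increments of $\zeta$ cancel in the difference), so no constant is needed for periodicity. A correction constant is needed for a different and more serious reason: the period of this difference along $\sfC_\alpha$ equals $-j\tau\,\eta_1(n\tau)$ up to an element of $2\pi i\ZZ$, where $\eta_1(n\tau)$ is the quasi-period attached to the period $1$ (a nonzero multiple of $G_2(n\tau)$). This contains an explicit factor of $\tau$, is not $1$-periodic in $\tau$, and hence is not quasi-modular; nor does it cancel in the sum $\sum_j\beta_j\Phi_j$ in general, since for instance $f=\wp(z;3\tau)\,\wp(z-\tau;3\tau)\in\J^{(4)}(\Gamma_1(3))$ has $\sum_j j\,c_{j,-1}=\wp'(\tau;3\tau)\neq 0$. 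The Legendre relation identifies the correct normalization as $\Phi_j=\zeta(z-j\tau)-\zeta(z)+j\tau\,\eta_1(n\tau)=2\pi i\bigl(\sfA(z-j\tau;n\tau)-\sfA(z;n\tau)\bigr)$, whose period is an honest $q$-series; but that $q$-series is a weight-one Eisenstein-type series with congruence conditions, and its quasi-modularity for $\Gamma_1(n)$ --- which is precisely the content of the lemma for the simple-pole part of $f$ --- is asserted in your sketch rather than proved. The paper's device of integrating $f\sfA$ sidesteps all of this: the expansion of $\sfA$ at $k\tau$ has constant term the integer $-k$ rather than $-k\tau$, so the simple-pole residues enter the final formula only multiplied by integers and by Laurent coefficients already covered by Lemma \ref{lem-quasi-modularity-Laurent-coefficients}, and no new weight-one modular objects need to be analyzed.
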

	
	\begin{proof}
	We use the function $\sfA$ which is $1$-periodic but satisfies $\sfA(z+\tau;\tau)=\sfA(z;\tau)-1$. Thus, given a quasi-Jacobi form $f$, the product $F(z;\tau)=f(z;\tau)\sfA(z;\tau)$ is $1$-periodic and satisfies
	$$F(z+n\tau;\tau)=F(z;\tau)-nf(z;\tau).$$	
	The integral of $F$ over $\sfC_\alpha$ and $\sfC_{\alpha+n\tau}$ differ precisely by $\int_{\sfC_{\alpha+n\tau}}nf(z;\tau)\dd z$:
	$$\int_{\sfC_\alpha}F\dd z-\int_{\sfC_{\alpha+n\tau}}F\dd z = n\int_{\sfC_\alpha}f\dd z.$$ Furthermore, as $\sfC_\alpha-\sfC_{\alpha+n\tau}$ is the boundary of $\alpha\leqslant\Im z\leqslant \alpha+n\Im\tau$ in the cylinder $\CC/\gen{1}$. Hence, by the residue theorem, the integral is equal to the sum of the residues of $F$ at its poles, which are precisely the $k\tau$ with imaginary part between $\alpha$ and $\alpha+n\Im\tau$. The developments of $f$ and $\sfA$ at $k\tau$ are
	$$f(k\tau+z;\tau)=\sum_r c_{k,r}(\tau)z^r \text{ and }\sfA(k\tau+z;\tau)=\frac{1}{2i\pi z}-k-\sum_{l=1}^\infty 2lG_{2l}(\tau)(2i\pi z)^{2l-1}.$$
	Thus, $2i\pi$ times the residue at $k\tau$ of their product is
	$$c_{k,0}(\tau)-(2i\pi)kc_{k,-1}(\tau)-\sum 2lG_{2l}(\tau)(2i\pi)^{2l}c_{k,-2l}(\tau).$$
	We then make the sum over $\alpha<k\Im\tau<\alpha+n\Im\tau$. As coefficients of $f$ at each pole are quasi-modular by Lemma \ref{lem-quasi-modularity-Laurent-coefficients}, we get the expected quasi-modularity.
	\end{proof}
	
	\begin{expl}
	The proposition allows for an explicit computation of the period provided we know the Laurent development of $f$ at its poles. For instance, for $f=\wp$, we get that its period is a modular form for $SL_2(\ZZ)$. For the $\wp_k$, we conclude that its period is modular for $\Gamma_1(n)$.
	\end{expl}

	\subsubsection{Quasi-modular forms out of multivariable quasi-Jacobi forms.} We now define periods for a multivariable quasi-Jacobi form. Recall that we view these as functions on the product of elliptic curves $E_\tau^N$, having poles at most along the diagonals $z_j-z_i\equiv k\tau\modulo 1,n\tau$ (with $n=1$ in the case of $SL_2(\ZZ)$). Taking the imaginary part of the coordinates, we have a projection $\pi:E_\tau^N\to R_\tau^N$, where $R_\tau=\RR/\gen{n\Im\tau}$. We denote by $\Delta_\tau$ the set of diagonals in $R_\tau^N$. We integrate quasi-Jacobi forms along fibers of this projection, which are real tori of dimension $N$, obtained by varying the real part of coordinates. This generalizes the $1$-dimensional case, since the paths $\sfC_\alpha$ are the fibers of the projection $E_\tau\to R_\tau$. The connected components of $R_\tau^N-\Delta_\tau$ are indexed by a discrete data $\Sigma$, and we label the corresponding component $U_\Sigma$. For $\alpha\in R_\tau^N-\Delta_\tau$, belonging to some connected component $U_\Sigma$, we denote by $\sfT_\alpha$ the torus fiber over $\alpha$.
	
	\smallskip
	
	Let $f\in\JJJ_N$ be a quasi-Jacobi form in $N$ variables. We set
	$$[f]_\Sigma=\int_{\sfT_\alpha}f(z_1,\cdots,z_N;\tau)\dd z_1\cdots\dd z_N,$$
	where $\alpha$ is any element of $U_\Sigma$. It is a function in $\tau$. It does not depend on $\alpha\in U_\Sigma$ since the fibers are cobordant and the holomorphic form is closed. The integral can be computed by successively integrating over each variable $z_i$.
	
	
	\begin{rem}
	In the case of $SL_2(\ZZ)$, a connected component of $R_\tau^N$ is fully determined by the cyclic order on its coordinates. For $\Gamma_1(n)$, we also need to recall the position of these coordinates with respect to the diagonals $x_j-x_i\equiv k\Im\tau\modulo n\Im\tau$ for $k\neq 0$. So they depend on a different discrete data $\Sigma$: the cyclic order of the remainder of the imaginary part modulo $\Im\tau$, and the $k_{ij}$ for which $k_{ij}\Im\tau<x_j-x_i<(k_{ij}+1)\Im\tau$.
	\end{rem}
	
	The goal is to show that these functions $[f]_\Sigma(\tau)$ are quasi-modular forms for $\Gamma_1(n)$.

	\begin{theo}\label{theo-quasi-modularity-periods-multivariable}
	We have the following:
	\begin{enumerate}[label=(\roman*)]
	\item If $f\in\JJJ_N^{(w)}$, then $[f]_\Sigma(\tau)\in\mathrm{QMod}_{\leqslant w}(SL_2(\ZZ))$.
	\item If $f\in\JJJ_N^{(w)}(\Gamma_1(n))$, then $[f]_\Sigma(\tau)\in\mathrm{QMod}_{\leqslant w}(\Gamma_1(n))$.
	\end{enumerate}
	\end{theo}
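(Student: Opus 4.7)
The plan is to prove both statements by induction on the number of variables $N$, with the base case $N=1$ being exactly Lemma~\ref{lem-quasi-modularity-period}. The strategy mimics the one-variable argument: integrate one variable at a time using the auxiliary function $\sfA$ and the residue theorem, then recognize that each residue is itself a (multivariable) quasi-Jacobi form in the remaining variables, to which the induction hypothesis applies.

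For the inductive step, fix $\alpha\in U_\Sigma$ and write the torus $\sfT_\alpha$ as a product $\sfC_{\alpha_N}\times\sfT'_{\alpha'}$, where $\sfT'_{\alpha'}$ corresponds to fixing the imaginary parts of $z_1,\dots,z_{N-1}$ and $\sfC_{\alpha_N}$ is the path in the last variable at imaginary height $\alpha_N$. Viewing $f$ as a one-variable (quasi-)Jacobi form in $z_N$ with parameters $z_1,\dots,z_{N-1}$, its poles lie along the translates $z_N\equiv z_j+k\tau\pmod{1,n\tau}$ (with $n=1$ in case (i)). We multiply by $\sfA(z_N-z_0;\tau)$ for a chosen base point $z_0$ (e.g.\ $z_0=0$), use that this product is $1$-periodic in $z_N$ with monodromy $-f$ in the $n\tau$-direction, and invoke the residue theorem on the strip $\alpha_N<\Im z_N<\alpha_N+n\Im\tau$. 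This expresses
\[
\int_{\sfC_{\alpha_N}} f(z_1,\dots,z_N;\tau)\,\dd z_N
\]
as a finite sum indexed by the poles enclosed by the strip: for each pair $(j,k)$ with $\alpha_N< \Im z_j + k\Im\tau<\alpha_N+n\Im\tau$, one gets a contribution built from the Laurent coefficients of $f$ at $z_N=z_j+k\tau$, multiplied by the values of $\sfA$ and its $G_{2l}$-expansion at that point. The key structural observation is that each such Laurent coefficient in the variable $z_N-z_j-k\tau$, evaluated at $z_N=z_j+k\tau$, lies in $\JJJ_{N-1}^{(w')}(\Gamma_1(n))$ for some $w'\leqslant w$, because setting $z_N=z_j+k\tau$ substitutes the allowed pole locations $z_\ell-z_N\equiv k'\tau$ into the allowed pole locations $z_\ell-z_j\equiv (k'+k)\tau$, all of which are permitted in the ring $\JJJ_{N-1}(\Gamma_1(n))$; the multivariable analog of Lemma~\ref{lem-quasi-modularity-Laurent-coefficients} then gives the claim on the weight.

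One then integrates the resulting expression against $\sfT'_{\alpha'}$. Each individual summand is (up to quasi-modular factors coming from $\sfA$) a period of a form in $\JJJ_{N-1}(\Gamma_1(n))$ over a suitable connected component of $R_\tau^{N-1}\setminus\Delta_\tau$ determined by $\Sigma$, so the induction hypothesis applies and yields quasi-modularity of each term. Since quasi-modular forms of weight $\leqslant w$ form a ring graded by weight, the sum is again quasi-modular of weight $\leqslant w$, establishing both (i) (with $\Gamma=SL_2(\ZZ)$, only the pole at $z_j\equiv 0$ contributing) and (ii).

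The main technical obstacle is bookkeeping: ensuring that the connected component $\Sigma$ on the $N$-torus cleanly decomposes into a choice of strip for $z_N$ together with a well-defined component $\Sigma'$ for the remaining variables, and that, throughout the iterated integration, the residues produced by the $\sfA$-trick genuinely remain within the ring $\JJJ_{N-1}(\Gamma_1(n))$ rather than leaking out through stray shifts. This is where the specific generators $\wp^{(s)}(z_i-z_j-k\tau;n\tau)$ and the fact that $\Gamma_1(n)$ fixes $k\tau$ modulo $n\tau$ are used essentially: an element of $\Gamma_0(n)$ with $a\not\equiv 1\pmod n$ would permute the shift indices $k$ among themselves, breaking the clean diagonalization used in the induction. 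This is precisely why the statement is formulated for $\Gamma_1(n)$ rather than $\Gamma_0(n)$.
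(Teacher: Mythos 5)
Your overall strategy --- induction on $N$, integrating out one variable at a time with the $\sfA$-trick and the residue theorem --- is the same as the paper's, but the specific choice of auxiliary factor $\sfA(z_N-z_0;\tau)$ with a \emph{fixed} base point $z_0=0$ breaks the induction in two ways. First, $\sfA(z_N;\tau)$ has poles at $z_N\equiv l\tau\pmod 1$, and the residues of $f\,\sfA(z_N)$ at these points are (up to constants) the values $f(z_1,\dots,z_{N-1},l\tau)$. Substituting $z_N=l\tau$ into the generators $\wp^{(s)}(z_i-z_N-k\tau;n\tau)$ produces $\wp^{(s)}(z_i-(k+l)\tau;n\tau)$, a function with poles along the \emph{fixed} loci $z_i\equiv(k+l)\tau$ rather than along diagonals; such functions do not lie in $\JJJ_{N-1}(\Gamma_1(n))$, so the induction hypothesis does not apply to these terms. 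Second, at the genuine diagonal poles $z_N=z_j+k\tau$ the residue carries the factor $\sfA(z_j+k\tau-z_0)$ and its derivatives; with $z_0=0$ this is a nonconstant, non-elliptic function of $z_j$ (only $1$-periodic), so the resulting integrand in the remaining variables is again not a quasi-Jacobi form, contrary to your claim that the $\sfA$-contributions enter only as ``quasi-modular factors.''

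The paper avoids both problems by taking the auxiliary factor to be $\sfA_{1N}=\sfA(z_1-z_N)$, a difference of two integration variables: every pole then encountered is a diagonal $z_a\equiv z_b+k\tau$, and the leftover non-elliptic factors $\sfA(z_{i_2}+k\tau-z_N)$ are eliminated by iterating the residue construction along chains $i_1=1,\dots,i_l=N$ terminating at $z_N$ (Lemma \ref{lem-integration-one-N+1-to-N}, with the polynomials $Q$ bookkeeping the monodromy of $\sfA$), while Lemma \ref{lem-quasi-jacobi-product-by-A} guarantees that the iterated residues of $f\,\sfA_{1N}^r$ stay inside the rings $\JJJ_{N-m+1}(\Gamma_1(n))$. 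You correctly flag ``leaking out through stray shifts'' as the main obstacle, but your construction does not resolve it, and with $z_0=0$ it cannot be resolved without essentially rebuilding the argument along the paper's lines (or enlarging the ring to admit poles at $z_i\equiv k\tau$ and redoing the whole analysis there). A minor point: the base case for $\JJJ_1$ is trivial because there are no diagonals, hence no poles, hence the form is constant in $z$; Lemma \ref{lem-quasi-modularity-period} is not what is used there.
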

	
	The first statement is proven in \cite{oberdieck2018holomorphic}. We adopt a similar approach to prove the second statement, proceeding by induction on the number of variables. The proof goes through two main steps:
	\begin{itemize}[label=$\bullet$]
	\item proving that the residue along one of the diagonals after multiplication by some power of $\sfA_{1N}=\sfA(z_1-z_N)$ is still a quasi-Jacobi form,
	\item using the integration trick from the one variable case to compute at least one integral and reduce to the first case.
	\end{itemize}
	
	The proof requires the use of residues along the diagonals. In \cite{oberdieck2018holomorphic}, they are denoted as right operators $R_{ab}$ taking residue along $z_a\equiv z_b\modulo 1,\tau$. Notice that since we consider products with $\sfA$, we lose the $\tau$-periodicity. Here, as we have several diagonals, we denote by $R_{ab}^k$ the residue operator taking the residue over the diagonal $z_a-z_b\equiv k\tau\modulo 1$. We denote the composition of residue operators as follows:
	$$\R_{i_1\cdots i_l}^{k_2\cdots k_l}=R_{i_1i_2}^{k_2}\cdots R_{i_{l-1}i_l}^{k_l}.$$
	
	\begin{lem}\label{lem-quasi-jacobi-product-by-A}
	For $f\in\JJJ_N^{(w)}(\Gamma_1(n))$ and any $r\geqslant 0$, then
	$$(f\sfA_{i_1i_m}^r)\R_{i_1\cdots i_m}^{k_2\cdots k_m}\in\JJJ_{N-m+1}^{(w+r-m+1)}(\Gamma_1(n)).$$
	\end{lem}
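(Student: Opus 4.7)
The strategy is to argue by induction on $m$, the single-residue case $m=2$ being the essential input. The role of the factor $\sfA_{i_1 i_m}^r$ is to absorb the non-ellipticity of $\sfA$ as residues are iterated: each residue substitutes $z_{i_j}\mapsto z_{i_{j+1}}+k_{j+1}\tau$, and the shift $\sfA(z+\tau)=\sfA(z)-1$ produces polynomial corrections in the $k_j$'s that combine cleanly with the Laurent coefficients of $f$. Throughout I use the transformation laws recalled in Example \ref{expl-quasi-jacobi-forms}: the generators $\wp^{(s)}(z_a-z_b-k\tau;n\tau)$ of $\JJJ_N(\Gamma_1(n))$ are genuine Jacobi forms for $\overline{\Gamma}_1(n)$ because $a\equiv 1\pmod n$ for $\abcd\in\Gamma_1(n)$, while the quasi-modular defect $(c\tau+d)^{-1}\sfA(z/(c\tau+d))=\sfA(z)-zc/(c\tau+d)$ is polynomial in $c/(c\tau+d)$ and hence compatible with quasi-modularity.

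For the base case $m=2$, set $\varepsilon=z_{i_1}-z_{i_2}-k_2\tau$ and expand $f=\sum_s c_s(z_\bullet;\tau)\,\varepsilon^s$ as a Laurent series around the diagonal. By the multivariable version of Lemma \ref{lem-quasi-modularity-Laurent-coefficients} --- whose proof extends verbatim, the key point being that the diagonal $z_{i_1}-z_{i_2}\equiv k_2\tau$ is preserved modulo $\gen{1,n\tau}$ by $\Gamma_1(n)$ --- each coefficient $c_s$ is a quasi-Jacobi form of weight $w+s$ for $\overline{\Gamma}_1(n)$ in the remaining $N-1$ variables. Laurent-expanding $\sfA(\varepsilon+k_2\tau)^r$ in $\varepsilon$ (its coefficients being polynomials in $k_2$ and the Eisenstein series $G_{2l}$, with an explicit pole at the origin when $k_2\equiv 0\pmod n$), multiplying by $f$, and extracting the coefficient of $\varepsilon^{-1}$ produces a finite combination of products of quasi-Jacobi forms with quasi-modular forms for $\Gamma_1(n)$, hence an element of $\JJJ_{N-1}^{(w+r-1)}(\Gamma_1(n))$.

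For the induction step, assume the statement for strictly fewer residues and apply $R^{k_2}_{i_1 i_2}$ first to $f\sfA_{i_1 i_m}^r$. The substitution $z_{i_1}=z_{i_2}+k_2\tau$ turns $\sfA(z_{i_1}-z_{i_m})^r$ into $(\sfA(z_{i_2}-z_{i_m})-k_2)^r$, which expands via the binomial theorem into a degree-$r$ polynomial in $\sfA_{i_2 i_m}$. Combined with the Laurent expansion of $f$ treated as in the base case, this shows
\[
R^{k_2}_{i_1 i_2}(f\,\sfA_{i_1 i_m}^r)=\sum_{j=0}^{r} h_j\,\sfA_{i_2 i_m}^j,
\]
with each $h_j\in\JJJ_{N-1}(\Gamma_1(n))$ of weight $w+r-j-1$. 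Applying $\R^{k_3\cdots k_m}_{i_2\cdots i_m}$ term by term and invoking the induction hypothesis on $h_j\,\sfA_{i_2 i_m}^j$ (with $m-1$ indices and exponent $j$) yields a quasi-Jacobi form of total weight $(w+r-j-1)+(j-(m-1)+1)=w+r-m+1$ on $N-m+1$ variables, as required.

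The main obstacle is the careful bookkeeping of how the non-ellipticity of $\sfA$ interacts with each residue. One must verify that the constants and $G_{2l}$-terms produced by iterating $\sfA(z+k_j\tau)=\sfA(z)-k_j$ combine with the Laurent coefficients of $f$ to yield a function whose poles lie only along the allowed diagonals $z_a-z_b\equiv k\tau\pmod{1,n\tau}$ and which transforms correctly under $\Gamma_1(n)$. This mirrors the argument of \cite[Appendix]{oberdieck2018holomorphic} for $SL_2(\ZZ)$, the essential modifications being that one works modulo the finer lattice $\gen{1,n\tau}$ and must track shift indices $k_j\in\ZZ/n\ZZ$ through modular transformations; the constraint $a\equiv 1\pmod n$ defining $\Gamma_1(n)$ is precisely what ensures these indices remain well defined under the group action.
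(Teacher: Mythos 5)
Your proof is correct and follows essentially the same route as the paper's (which itself transposes \cite[Lemma 20]{oberdieck2018holomorphic}): induction on the number of residues, with the base case handled by Laurent-expanding along the diagonal and observing that the coefficients of $f$ and of $\sfA$ there are quasi-modular for $\Gamma_1(n)$, and the induction step by noting that one residue converts $\sfA_{i_1 i_m}^r$ into a polynomial in $\sfA_{i_2 i_m}$ with quasi-Jacobi coefficients (the derivatives of $\sfA$ arising from higher-order poles of $f$ being reabsorbed via $\partial_z\sfA=-2i\pi(\wp+2G_2)$). One small inaccuracy that does not affect the argument: $\sfA(\varepsilon+k_2\tau;\tau)=\sfA(\varepsilon;\tau)-k_2$ has a pole at $\varepsilon=0$ for \emph{every} integer $k_2$, not only for $k_2\equiv 0\pmod n$, since the poles of $\sfA$ lie at $0$ modulo $\gen{1,\tau}$ rather than $\gen{1,n\tau}$; the extra contributions this produces are again polynomials in $k_2$ and the $G_{2l}$, so the conclusion is unchanged.
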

	
	\begin{proof}
	The is verbatim to \cite[Lemma 20]{oberdieck2018holomorphic}. We prove it by induction assuming that $f$ is a monomial in the functions $\wp_k$:
	\begin{itemize}[label=$\ast$]
	\item Assume we have only one residue: $(f\sfA_{ab})R^k_{ab}$. We write $f=\widetilde{f}f_{kab}$, where $f_{ab}$ regroups all the $\wp_k^{(s)}(z_a-z_b;\tau)$, having a pole on the considered diagonal. As the Laurent coefficients of $\sfA_{ab}$ and $f_{kab}$ along the diagonal are quasi-modular forms, a direct computation shows that the residues of $\widetilde{f}(f_{kab}\sfA_{ab})$ are as well.
	\item To complete the induction, assume we take one residue of the form $(fA_{ac})R_{ab}^k$ with $b\neq c$. We still write $f=\widetilde{f}f_{kab}$. Now, $\sfA_{ac}$ has no pole along the diagonal $z_a-z_b\equiv k\tau\modulo 1,\tau$. Thus, we get
	$$(f\sfA_{ac}^r)R_{ab}^k = \sum \mathrm{Coeff}_{(z_a-z_b-k\tau)^{-l}}(f_{kab})\frac{1}{(l-1)!}\frac{\partial^{l-1}}{\partial z_a^{l-1}}(\widetilde{f}\sfA_{ac}^r)|_{z_a=z_b+k\tau}.$$
	As the derivative of $\sfA$ expresses in terms of $\wp$, the right-hand side expresses as a sum of terms of the form $g\sfA_{bc}^s$, and we conclude by induction.
	\end{itemize}
	\end{proof}
	
	\begin{lem}\label{lem-integration-one-N+1-to-N}
	Let $f\in\JJJ_N(\Gamma_1(n))$ be a quasi-Jacobi form. There exists polynomials $Q_{i_1\cdots i_l}^{k_2\cdots k_l}$ such that we have
	$$[f]_{\Sigma}=\sum_{l\geqslant 1}\sum_{\substack{i_1,\cdots,i_l \\k_2,\cdots,k_l\in\ZZ/n\ZZ}} \left[ fQ_{i_1\cdots i_l}^{k_2\cdots k_l}(\sfA_{1N})\R_{i_1\cdots i_l}^{k_2\cdots k_l} \right]_{\mathrm{ft}(\Sigma)} ,$$
	where $(i_1,\cdots,i_l)$ is an sequence of distinct elements of $[\![1;N]\!]$ with $i_1=1$, $i_l=N$, and $\mathrm{ft}(\Sigma)$ is the component of $R_\tau^{N-l+1}-\Delta$ obtained from $\Sigma$ by merging the coordinates $z_{i_1},\cdots,z_{i_l}$ through the residue operator.
	\end{lem}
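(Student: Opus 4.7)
The proof proceeds by induction on $N$, generalizing the integration trick of Lemma \ref{lem-quasi-modularity-period} to the multivariable setting, following the strategy of \cite[Appendix]{oberdieck2018holomorphic}. The core idea is to integrate iteratively in one variable at a time, using the auxiliary function $\sfA_{1N}=\sfA(z_1-z_N;\tau)$ to convert each integration into a sum of residues along diagonals.

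For both the base case $N=2$ and the inductive step, I would first integrate in $z_1$. Because $f$ is $n\tau$-periodic in $z_1$ and $\sfA(z+n\tau;\tau)=\sfA(z;\tau)-n$, we have
$$(f\sfA_{1N})(z_1+n\tau;\tau)=(f\sfA_{1N})(z_1;\tau)-n\,f(z_1;\tau).$$
Integrating $f\sfA_{1N}$ over the cycle $\sfC_\alpha-\sfC_{\alpha+n\tau}$ in $z_1$ with the other variables held fixed, applying the residue theorem on the resulting strip in $\CC/\gen{1}$, and then integrating the identity over the remaining variables over the appropriate region, yields
$$[f]_\Sigma=\frac{2\pi i}{n}\sum_{j=2}^{N}\sum_{k\in\ZZ/n\ZZ}\bigl[R_{1j}^{k}(f\sfA_{1N})\bigr]_{\mathrm{ft}_{1,j}(\Sigma)},$$
the sum running over all diagonals $z_1\equiv z_j+k\tau\pmod 1$ in the strip and $\mathrm{ft}_{1,j}(\Sigma)$ denoting the component obtained by identifying $z_1$ with $z_j$. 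By Lemma \ref{lem-quasi-jacobi-product-by-A}, each $R_{1j}^{k}(f\sfA_{1N})$ lies in $\JJJ_{N-1}(\Gamma_1(n))$, and the terms with $j=N$ are already of the required form, with chain $(1,N)$ and polynomial $Q(X)=\tfrac{2\pi i}{n}X$.

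For $j\neq N$, I would apply the induction hypothesis to $R_{1j}^{k}(f\sfA_{1N})$, viewed as a quasi-Jacobi form in $(z_2,\dots,z_N)$ with distinguished pair $(j,N)$. This expresses its period as a sum over chains $(j,i_3,\dots,i_l)$ terminating at $N$, of terms of shape $[g\,Q'(\sfA_{jN})\,\R']_{\mathrm{ft}(\Sigma')}$. To convert these back in terms of $\sfA_{1N}$, one uses that $R_{1j}^{k}$ substitutes $z_1=z_j+k\tau$, hence
$$\sfA_{1N}|_{z_1=z_j+k\tau}=\sfA(z_j-z_N+k\tau;\tau)=\sfA_{jN}-k,$$
by iterating $\sfA(z+\tau;\tau)=\sfA(z;\tau)-1$. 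Thus $\sfA_{jN}=\sfA_{1N}+k$ as operators acting after $R_{1j}^{k}$, so any polynomial in $\sfA_{jN}$ rewrites as a polynomial of the same degree in $\sfA_{1N}$. Concatenating chains $(1,j)\circ(j,i_3,\dots,i_l)=(1,j,i_3,\dots,i_l)$ and gathering the polynomial factors produces the announced decomposition with polynomial $Q_{1,j,i_3,\dots,i_l}^{k,k_3,\dots,k_l}(\sfA_{1N})$.

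The main technical difficulty is the bookkeeping of polynomial shifts through the iteration and the verification that the composed residue operators interact properly with the auxiliary $\sfA$-factors. These points are controlled by the linearity of the shift relation $\sfA(z+k\tau;\tau)=\sfA(z;\tau)-k$ and by Lemma \ref{lem-quasi-jacobi-product-by-A}, which together guarantee that at each stage the output remains in the appropriate ring of quasi-Jacobi forms with polynomial dependence on $\sfA_{1N}$, and that the recursion terminates precisely when the chain reaches $z_N$.
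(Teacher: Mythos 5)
Your overall mechanism is the right one and is essentially the paper's: integrate in $z_1$ against $\sfA_{1N}$, use the quasi-periodicity $\sfA(z+n\tau;\tau)=\sfA(z;\tau)-n$ together with the residue theorem on the strip $\alpha_1\leqslant\Im z_1\leqslant\alpha_1+n\Im\tau$, and iterate until the residue chain reaches $z_N$, with Lemma \ref{lem-quasi-jacobi-product-by-A} guaranteeing that completed chains land back in $\JJJ_{\bullet}(\Gamma_1(n))$. The paper organizes the iteration as an induction on the length $L$ of the chain rather than on the number of variables $N$, but that difference is cosmetic. Two points in your write-up, however, do not go through as stated. First, for $j\neq N$ the term $R_{1j}^{k}(f\sfA_{1N})$ is \emph{not} an element of $\JJJ_{N-1}(\Gamma_1(n))$: since the $\sfA$-factor does not connect the endpoints of the residue, the output is a sum $\sum_s g_s\,\sfA_{jN}^s$ with $g_s$ quasi-Jacobi but with leftover powers of $\sfA_{jN}$, which is only $1$-periodic (this is exactly the second case treated in the proof of Lemma \ref{lem-quasi-jacobi-product-by-A}). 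So the induction hypothesis in the form ``for all $f\in\JJJ_{N-1}(\Gamma_1(n))$'' cannot be invoked; you must strengthen the statement being proved to cover products $f\cdot P(\sfA)$ for an arbitrary polynomial $P$. Once you do, the inductive step is no longer a formal substitution: to integrate $fQ(\sfA_{1N})\R$ in the next variable you need a discrete antiderivative, i.e.\ a polynomial $P$ with $P(X-n)-P(X)=-Q(X)$, so that $fP(\sfA_{1N})\R$ acquires the quasi-periodicity that makes the residue theorem reproduce $fQ(\sfA_{1N})\R$. This device is the heart of the paper's proof and is missing from your argument; the shift identity $\sfA_{jN}=\sfA_{1N}+k$ that you invoke handles a different (and easier) issue.

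Second, a smaller bookkeeping point: the residues picked up in the strip occur at diagonals $z_1\equiv z_j+\widetilde{k}\tau$ for the \emph{integers} $\widetilde{k}$ with $\alpha_1<\alpha_j+\widetilde{k}\Im\tau<\alpha_1+n\Im\tau$, which depend on the component $\Sigma$ and need not lie in $[0,n)$. Reducing $\widetilde{k}=nq(\widetilde{k})+k$ replaces $\sfA_{1N}$ by $\sfA_{1N}-nq(\widetilde{k})$, so even the $j=N$ terms of your first step carry the polynomial $X-nq(\widetilde{k})$ rather than $X$; this is precisely where the nontrivial, $\Sigma$-dependent polynomials $Q_{i_1\cdots i_l}^{k_2\cdots k_l}$ originate. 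Neither issue is fatal — fixing them essentially reproduces the paper's proof — but as written the inductive step has a genuine gap.
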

	
	\begin{rem}
	In the case of $SL_2(\ZZ)$ treated in \cite{oberdieck2018holomorphic}, the choice of $\Sigma$ is equivalent to the choice of a permutation $\sigma$, there is no choice of $k_i$, and the polynomials $Q_{i_1\cdots i_l}$ are explicitly chosen to be
	$$Q(X)=\bino{X+l-2-g_{i_1i_2}-\cdots -g_{i_li_{l+1}}}{l-1},$$
	with $g_{ab}=\mathds{1}_{\sigma(a)>\sigma(b)}$. For the case of $\Gamma_1(n)$, the notations are more heavy, but the spirit is the same.
	\end{rem}
	
	\begin{proof}
	We proceed by induction to prove that for any $L$,
	$$[f]_\Sigma=\sum_{l=1}^L\sum_{\substack{i_1,\cdots,i_l \\0\leqslant k_2,\cdots,k_l< n}} \left[ fQ_{i_1\cdots i_l}^{k_2\cdots k_l}(\sfA_{1N})\R_{i_1\cdots i_l}^{k_2\cdots k_l} \right]_{0,\Sigma} ,$$
	with $i_k$ distinct elements, $i_1=1$, $i_l=N$ when $l<L$ (and no condition on $i_L$). It then suffices to take $L=N+1$. Let $\alpha\in U_\Sigma$, so that we compute the integral of $f$ on $\sfT_\alpha$, with $\alpha$ chosen such that $0<\alpha_i<n\Im\tau$.
	\begin{itemize}[label=$\circ$]
	\item The equality is true for $L=1$ since it just amounts to $[f]_\Sigma=[f]_\Sigma$.
	
	\item To get the spirit of the induction, we continue with the case $L=2$. To do so, we consider $f$ as a function of $z_1$ depending on the remaining variables. We compute the integral over $z_1$ using the function
	$$f\sfA_{1N}=f(z_1,\dots,z_N;\tau)\sfA(z_1-z_N;\tau),$$
	which satisfies
	$$(f\sfA_{1N})(z_1+n\tau,z_2,\dots,z_N;\tau)=(f\sfA_{1N})(z_1,\dots,z_N;\tau)-nf(z_1,\dots,z_N;\tau).$$
	Moreover, it has poles exactly at the $z_1\equiv z_j+k\tau\modulo 1,n\tau$ for every other coordinate $z_j$ and possible choice of $k\in\ZZ/n\ZZ$. We integrate over $\sfC_{\alpha_1}-\sfC_{\alpha_1+n\tau}$:
	$$\int_{\sfC_{\alpha_1}}f\sfA_{1N}\dd z_1 - \int_{\sfC_{\alpha_1+n\tau}}f\sfA_{1N}\dd z_1 = n\int_{\sfC_{\alpha_1}} f\dd z_1.$$
	Meanwhile, as $\sfC_{\alpha_1}-\sfC_{\alpha_1+n\tau}$ is the boundary of $\{ \alpha_1\leqslant\Im z_1\leqslant\alpha_1+n\Im\tau\}$, the residue theorem yields $2i\pi$ times the sum over the residues:
	$$\int_{\sfC_{\alpha_1}} f\dd z_1=\frac{1}{n}\sum_{\substack{i_2\neq 1 \\ \widetilde{k_2}}} (f\sfA_{1N})R_{1i_2}^{\widetilde{k_2}},$$
	where $\widetilde{k_2}$ is subject to the condition
	$$\alpha_1<\alpha_{i_2}+\widetilde{k_2}\Im\tau<\alpha_1+n\Im\tau.$$
	This latter condition is for the pole to belong to the domain with boundary $\sfC_{\alpha_1}-\sfC_{\alpha_1+n\tau}$. It is possible to reduce $\widetilde{k_2}$ modulo $n$ up to translating the argument. If we write $\widetilde{k_2}=n\cdot q(\widetilde{k_2})+k_2$, with $0\leqslant k_2<n$, then
	$$(f\sfA_{1N})R_{1i_2}^{\widetilde{k_2}} = f(\sfA_{1N}-nq(\widetilde{k_2}))R_{1i_2}^{k_2}.$$
	 Each term under the sum is now a function in the variables $(z_2,\cdots,z_N)$, since $z_1$ has been set equal to $z_{i_2}+\widetilde{k}_2\tau$ when taking the residue. Making the integral over the remaining variables yields the desired equality for $L=2$.
	 
	
	\item We now assume the equality is true for some $L$. Consider a term for which $i_L\neq N$:
	$$fQ_{i_1\cdots i_L}^{k_2\cdots k_L}(\sfA_{1N})\R_{i_1\cdots i_L}^{k_2\cdots k_L}.$$
	It depends on the $N-L+1$ variables $z_1(=z_{i_2}=\cdots =z_{i_L})$ and $z_j$ for $j\neq 1,i_2,\cdots,i_L$. We now perform the integral over $z_{i_L}$. To use the same integration trick, we look for a function $\sfH$ such that
	$$\sfH(z_{i_L}+n\tau)=\sfH-fQ_{i_1\cdots i_L}^{k_2\cdots k_L}(\sfA_{1N})\R_{i_1\cdots i_L}^{k_2\cdots k_L}.$$
	Let $P$ be a polynomial such that
	$$P(X-n)-P(X)=-Q_{i_1\cdots i_L}^{k_2\cdots k_L},$$
	and set $\sfH=fP(\sfA_{1N})\R_{i_1\cdots i_L}^{k_2\cdots k_L}$. We thus have
	\begin{align*}
	 & fP(\sfA_{1N})(z_1+n\tau)-fP(\sfA_{1N}) \\
	= & fP(\sfA_{1N}-n)-fP(\sfA_{1N}) \\
	= & -fQ_{i_1\cdots i_L}^{k_2\cdots k_L}(\sfA_{1N}).\\
	\end{align*}
	As the residue commute to translation operators, successive integration show that $\sfH$ satisfies the desired relation. The choice of $P$ can be made explicitly with binomial coefficients in the $SL_2(\ZZ)$ case from \cite{oberdieck2018holomorphic}. We then apply residue theorem by integrating over $\sfC_{\alpha_{i_L}} - \sfC_{\alpha_{i_L}+n\tau}$. The function $\sfH$ has divisors along $z_{i_L}\equiv z_{i_{L+1}}+k_{L+1}\tau\modulo 1,n\tau$ for any possible choice of $i_{L+1}$ distinct from $i_1,\cdots,i_L$, and $k_{L+1}$. The residue theorem ensures that
	$$ \int_{\sfC_{\alpha_{i_L}}} fQ_{i_1\cdots i_L}^{k_2\cdots k_L}(\sfA_{1N})\dd z_{i_L} = \sum_{\substack{i_{L+1}\neq i_1,\cdots,i_L \\ \widetilde{k}_{L+1} }} fP(\sfA_{1N})\R_{i_1\cdots i_L}^{k_2\cdots k_L}R_{i_Li_{L+1}}^{\widetilde{k}_{L+1}}.$$
	The integer $\widetilde{k}_{L+1}$ has to satisfy
	$$\alpha_{i_L}<\alpha_{i_{L+1}}+\widetilde{k}_{L+1}\Im\tau < \alpha_{i_L}+n\Im\tau.$$
	As in the $L=2$ case, it is possible to translate each $\widetilde{k}_{L+1}$ to take it between $0$ and $n$, by writing $\widetilde{k}_{L+1}=n\cdot q(\widetilde{k}_{L+1})+k_{L+1}$, $0\leqslant k_{L+1}<n$, and setting
	$$Q_{i_1\cdots i_{L+1}}^{k_2\cdots k_{L+1}}=P(X-nq(\widetilde{k}_{L+1})).$$
	Integrating with the remaining variables allows us to conclude the induction.
	\end{itemize}
	\end{proof}
	
	We now get to the proof of the main result of the section.
	
	\begin{proof}[Proof of Theorem \ref{theo-quasi-modularity-periods-multivariable}]
	We proceed by induction on the number of variables $N$.
	\begin{itemize}[label=$\circ$]
	\item The result is true for $N=0$ since there is no variable. The result is also true for $N=1$ since there are no diagonals in this case, so no poles, implying that the functions are constant in $z$.
	\item We now assume that the result is true for function with at most $N-1$ variables. Using Lemma \ref{lem-integration-one-N+1-to-N}, we can write
	$$[f]_\Sigma=\sum_{l\geqslant 1}\sum_{\substack{1=i_1,\cdots,i_l=n \\0\leqslant k_2,\cdots,k_l< n}} \left[ fQ_{i_1\cdots i_l}^{k_2\cdots k_l}(\sfA_{1N})\R_{i_1\cdots i_l}^{k_2\cdots k_l} \right]_\Sigma .$$
	Then, Lemma \ref{lem-quasi-jacobi-product-by-A} ensures that each of the member is a quasi-Jacobi form with fewer variables. Therefore, the induction hypothesis allows us to conclude the quasi-modularity.
	\end{itemize}
	\end{proof}
	
	\begin{rem}
	As the polynomials $Q_\bullet^\bullet$ are not of pure degree, the quasi-modular forms we obtain are of mixed weight. In \cite{oberdieck2018holomorphic}, we recover quasi-modular forms of pure weight by summing over all the connected components of $R_\tau^N-\Delta_\tau$.
	\end{rem}

	\subsection{Quasi-modular forms out of graph sums}
	\label{sec-generating-series-graphs-sums}
	
	\subsubsection{Generating series of graphs.} In this section we prove a result that generalizes \cite[Theorem 6.1]{goujard2019counting}. The statement is as follows. We consider $\RR/\ZZ$ with its natural orientation, and with $N$ marked points $0<x_1<\cdots< x_N<1$, labeled by elements of $[\![1;N]\!]$. Let $\Gamma$ be a graph with vertices also labeled by $[\![1;N]\!]$. This does not determine a map from $\Gamma$ to $\RR/\ZZ$ since we also need to specify the image of the edges. This is done by the following data:
	\begin{itemize}[label=$\ast$]
	\item an orientation on $\Gamma$, which we denote by $G\in\Gamma$ as in \cite{goujard2019counting},
	\item the number of turns that the image of the edge makes in $\RR/\ZZ$, which amounts to give the length of any edge, pulling back the metric from $\RR/\ZZ$.
	\end{itemize}
	The second data amounts to assign to any edge the number of times it crosses $0\in\RR/\ZZ$, called \textit{height}. We have $h_e\geqslant 0$ if the image of the vertices are in the same order in $[\![1;N]\!]$, and $h_e\geqslant 0$ if they are in the reverse order.
	
	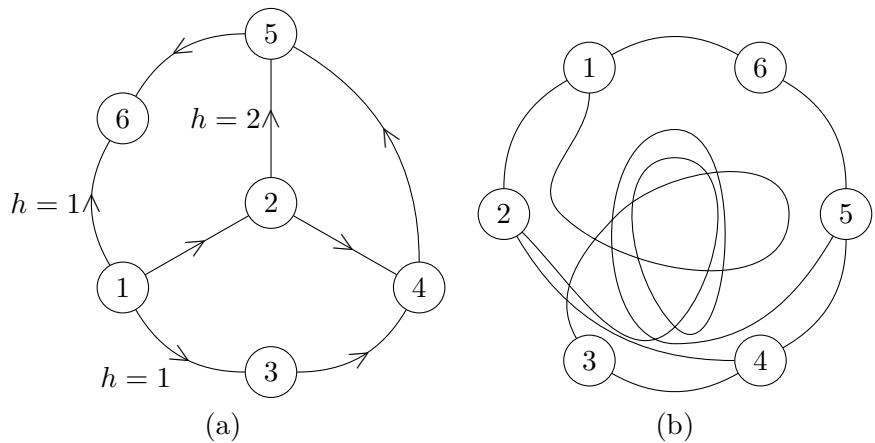
\begin{figure}
	\begin{center}
	\begin{tabular}{cc}
	\begin{tikzpicture}[line cap=round,line join=round,x=0.75cm,y=0.75cm]
	\flatpearl (1) at (-150:3) label=1;
	\flatpearl (2) at (180:0) label=2;
	\flatpearl (3) at (-90:3) label=3;
	\flatpearl (4) at (-30:3) label=4;
	\flatpearl (5) at (90:3) label=5;
	\flatpearl (6) at (150:3) label=6;
	\draw (1) to node[midway,sloped] {$>$}(2) ;
	\draw (1) to[bend right] node[midway,sloped] {$>$} node[midway,below left] {$h=1$} (3) ;
	\draw (3) to[bend right] node[midway,sloped] {$>$} (4) ;
	\draw (2) to node[midway,sloped] {$>$} node[midway,left] {$h=2$}  (5) ;
	\draw (2) to node[midway,sloped] {$>$} (4) ;
	\draw (4) to[bend right] node[midway,sloped] {$<$}  (5) ;
	\draw (5) to[bend right] node[midway,sloped] {$<$} (6) ;
	\draw (6) to[bend right] node[midway,sloped] {$<$} node[midway,left] {$h=1$} (1) ;
	\end{tikzpicture} & \begin{tikzpicture}[line cap=round,line join=round,x=0.75cm,y=0.75cm]
	\flatpearl (1) at (120:3) label=1;
	\flatpearl (2) at (180:3) label=2;
	\flatpearl (3) at (-120:3) label=3;
	\flatpearl (4) at (-60:3) label=4;
	\flatpearl (5) at (0:3) label=5;
	\flatpearl (6) at (60:3) label=6;
	\draw (1) to[bend right] (2) ;
	\draw (1) to[out=-90,in=135] (180:2) to[out=-45,in=-90] (0:2) to[out=90,in=45] (180:1) to[out=-135,in=120] (3) ;
	\draw (3) to[bend right] (4) ;
	\draw (2) to[out=-45,in=-135] (-90:2) to[out=45,in=0] (90:1) to[out=180,in=135] (-90:2) to[out=-45,in=0] (90:1.5) to[out=180,in=180] (-90:2.3) to[out=0,in=-120] (5) ;
	\draw (2) to[bend right] (4) ;
	\draw (4) to[bend right] (5) ;
	\draw (5) to[bend right] (6) ;
	\draw (6) to[bend right] (1) ;
	\end{tikzpicture} \\
	(a) & (b) \\
	\end{tabular}
	\caption{\label{fig-graph-map-to-R/Z}A graph with a specified orientation and height define a map to $\RR/\ZZ$.}
	\end{center}
	\end{figure}
	
	\begin{expl}
	On Figure \ref{fig-graph-map-to-R/Z} we give a graph $\Gamma$ with a given orientation in (a). We also give a height to the edges, which defines a map to $\RR/\ZZ$. The edges between 6 and 1, and 1 and 3 have height $1$ since they pass once through $0$ (\textit{i.e.} between $1$ and $6$). The edge between $2$ and $5$ passes through $0$ twice, so it has height $2$. 
	\end{expl}
	
	For every edge of $\Gamma$, we choose an even integer $m_e$, and denote $\mfk=(m_e)$. In \cite{goujard2019counting}, the authors consider the following generating series
	$$S(\Gamma,\mfk)=\sum_{G\in\Gamma}S(G,\mfk), \text{ where }S(G,\mfk)=\sum_{\bfw,\bfh}\prod_e w_e^{m_e+1}q^{w_eh_e}\prod_V\delta(V),$$
	where the sum is over the possible assignments $\bfw$ and $\bfh$ of weight and height to each edge of $\Gamma$, and $\delta(V)$ is the divergence condition at the vertex $V$: it is $1$ if the sum of incoming weights is equal to the sum of outcoming weights, and $0$ else. The weights $w_e$ are positive integers. Theorem 6.1 from \cite{goujard2019counting} states that the functions $S(\Gamma,\mfk)$ are quasi-modular forms of weight $\sum_e (2+m_e)$. Here, we refine this statement by allowing congruence conditions on the heights.
	
	\medskip
	
	\subsubsection{Generating series of graphs with congruence conditions.} Let $\Gamma$ be a graph, $\Lfk$ the set of edges that are loops, and $\efk$ a subset of edges of $\Gamma$. For an edge $e\in\efk$, we fix a condition:
	\begin{itemize}[label=$\ast$]
	\item If $e\in\Lfk$ is a loop, we fix a subset $\Upsilon_e\subset\ZZ/n\ZZ$ which is stable by multiplication by an element of $(\ZZ/n\ZZ)^\times$.
	\item If $e$ is not a loop, assume it is oriented, and denote by $\overline{e}$ the edge with reversed orientation. We fix an element $\mu_e\in\ZZ/n\ZZ$, and $\mu_{\overline{e}}=-\mu_e$ for the reverse orientation.
	\end{itemize}
	We have thus define a function $\mu$ on any edge chosen in $\efk$ (not a loop) with choice of orientation. We denote this set of constraints by $\bfmu$. We consider the refined generating series:
	$$S(\Gamma,\mfk,\efk,\bfmu)=\sum_{G\in\Gamma} S(G,\mfk,\efk,\bfmu), \text{ where }S(G,\mfk,\efk,\bfmu)=\sum_{\bfh,\bfw} \prod_e w_e^{m_e+1}q^{w_eh_e}\prod_V\delta(V),$$
	where in the sum over $\bfh,\bfw$ is over the assignments of height and weight as the classical case from \cite{goujard2019counting}, but where $\bfh$ is subject to the following additional condition: if $e\in\efk$ is a loop, $h_e\in \Upsilon_e$, and if $e$ is not a loop, oriented with the orientation induced by $G$, we need to have $h_e\equiv\mu_e\modulo n$.
	
	\begin{theo}\label{theo-quasi-modularity-graph-sums}
	For any graph $\Gamma$, subset of edges $\efk$ and function $\mu$, the refined generating series $S(\Gamma,\mfk,\efk,\bfmu)$ is a quasi-modular form for $\Gamma_1(n)$ of weight $\sum (2+m_e)$.
	\end{theo}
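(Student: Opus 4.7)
The plan is to adapt the approach of \cite[Theorem 6.1]{goujard2019counting} by expressing $S(\Gamma,\mfk,\efk,\bfmu)$ as a period of a multivariable quasi-Jacobi form for $\overline{\Gamma}_1(n)$, then applying Theorem \ref{theo-quasi-modularity-periods-multivariable}(ii). To each vertex $V$ of $\Gamma$ I assign a complex variable $z_V$, and to each edge $e$ of a fixed orientation $G\in\Gamma$ a propagator $P_e$ that is a quasi-Jacobi form in $z_{V_1}-z_{V_2}$. The propagator is chosen so that its Fourier expansion in the exponential coordinates $\xi_V=e^{2\pi iz_V}$ and in $q=e^{2\pi i\tau}$ generates precisely the one-edge contribution $w_e^{m_e+1}q^{w_eh_e}\xi_{V_1}^{w_e}\xi_{V_2}^{-w_e}$, with the height summation ranging over the subset of $\ZZ$ prescribed by the data: $\ZZ_{>0}$ for edges outside $\efk$, $\{h>0:h\equiv\mu_e\!\pmod n\}$ for non-loop edges in $\efk$, and $\Upsilon_e$ for loops in $\efk$.

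For edges $e\notin\efk$ I take $P_e$ to be an appropriate derivative of $\wp(z_{V_1}-z_{V_2};\tau)$, as in the classical Goujard-M\"oller setup. For $e\in\efk$ not a loop, oriented so that $\mu_e$ is the prescribed residue, I use instead a derivative of the shifted function $\wp_{-\mu_e}(z_{V_1}-z_{V_2};\tau)=\wp(z_{V_1}-z_{V_2}+\mu_e\tau;n\tau)$; because the Fourier expansion of $\wp_k(z;\tau)$ takes the schematic form $\sum w^2\,q^{w(nh-k)}\xi^w+\cdots$, the effective height is automatically congruent to $-k\bmod n$, which enforces $h_e\equiv\mu_e\pmod n$. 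By Example \ref{expl-quasi-jacobi-forms} each $\wp_k^{(s)}$ lies in $\JJJ_N(\Gamma_1(n))$. For a loop $e\in\Lfk\cap\efk$ with permitted set $\Upsilon_e$ I take the linear combination $\sum_{k\in-\Upsilon_e}\wp_k^{(s)}$; the hypothesis that $\Upsilon_e$ is stable under $(\ZZ/n\ZZ)^\times$ is exactly what guarantees that the $\Gamma_1(n)$-action described in Example \ref{expl-quasi-jacobi-forms} permutes the terms of this sum, so that the combination remains in $\JJJ_N(\Gamma_1(n))$.

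Now form the product $F_G(z_1,\dots,z_N;\tau)=\prod_e P_e$, which is a multivariable quasi-Jacobi form for $\overline{\Gamma}_1(n)$ of total weight $\sum_e(m_e+2)$. Integrating $F_G$ along a torus fiber $\sfT_\alpha$ for $\alpha$ in the chamber $\Sigma\subset R_\tau^N-\Delta_\tau$ determined by the cyclic order $0<x_1<\cdots<x_N<1$ extracts the $\xi_V^0$-coefficient in each vertex variable; this imposes $\sum_{e\ni V}\varepsilon_e w_e=0$ at every vertex, which is precisely the divergence condition $\delta(V)$. Summing $[F_G]_\Sigma$ over the orientations $G\in\Gamma$ thus recovers $S(\Gamma,\mfk,\efk,\bfmu)$ up to a combinatorial sign bookkeeping, and Theorem \ref{theo-quasi-modularity-periods-multivariable}(ii) applied to each $[F_G]_\Sigma$ yields quasi-modularity for $\Gamma_1(n)$ of weight $\sum_e(m_e+2)$.

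The main obstacle I expect is the combinatorial matching between the analytic setup and the graph sum. Specifically, one must verify that the chamber $\Sigma$ corresponds correctly to the prescribed cyclic positions of the vertices on $\RR/\ZZ$ together with the sign conventions for heights (which translate ``incoming versus outgoing at each vertex'' into ``which residue is picked up by integration along $\sfT_\alpha$''), and that for $e\in\efk$ the pole of $\wp_{-\mu_e}$ at $z=-\mu_e\tau$ is the only one the integration selects, with the remaining poles at $z\equiv k\tau\pmod 1$ for $k\ne-\mu_e$ either absorbed into the chamber choice or handled via the residue-and-multiplication machinery of Lemmas \ref{lem-quasi-jacobi-product-by-A} and \ref{lem-integration-one-N+1-to-N}. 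Once these identifications are checked, the quasi-modularity is a direct corollary of Theorem \ref{theo-quasi-modularity-periods-multivariable}(ii).
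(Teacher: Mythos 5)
Your proposal follows essentially the same route as the paper: a graph propagator built from derivatives of $\wp$ for unconstrained edges and of the shifted functions $\wp_k(z;\tau)=\wp(z-k\tau;n\tau)$ for edges in $\efk$, whose constant Fourier coefficient over the chamber determined by the cyclic order of the vertices reproduces $S(\Gamma,\mfk,\efk,\bfmu)$, followed by an appeal to Theorem \ref{theo-quasi-modularity-periods-multivariable}(ii). The ``combinatorial matching'' you flag as the main obstacle is exactly the content of Lemma \ref{lem-dvp-shifted-propagators} and Proposition \ref{prop-generating-series-contour-integral-as-contour-integral} in the paper, and it does go through: the chamber $1>|\zeta_i|/|\zeta_j|>|\sfq|$ for $i<j$ makes all the shifted Fourier expansions simultaneously valid, and extracting the constant coefficient in each $\zeta_V$ imposes the divergence conditions.

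The one place where your argument as written does not work is the loops. A loop has both endpoints at the same vertex, so your propagator for it is evaluated at $z_{V_1}-z_{V_2}=0$: for a loop outside $\efk$ this sits on the pole of $\wp^{(m_e)}$, and for a loop in $\efk$ with $0\in\Upsilon_e$ (e.g.\ $\Upsilon_e=\{0\}$, which is a legitimate $(\ZZ/n\ZZ)^\times$-stable set) it sits on the pole of $\wp_0^{(s)}$, so the product $\prod_e P_e$ is not even defined. Moreover, even when $0\notin\Upsilon_e$, the Fourier expansion of a single $P_k$ at $\zeta=1$ already produces both residue classes $h\equiv k$ and $h\equiv -k$ (Lemma \ref{lem-dvp-shifted-propagators}), so summing over all $k\in-\Upsilon_e$ double-counts the orbit $\{\pm k\}$. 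The paper sidesteps all of this by first factoring the loop contributions out of the graph sum (Lemma \ref{lem-factorization-loop-contribution}) --- a loop's weight enters no balancing condition, so its $(w,h)$-sum is an independent scalar factor --- and then proving directly, by an induction on the divisors of $n$, that each constrained loop sum $\sum_{h\in\Upsilon_e}\sum_{w\geqslant 1}w^{m_e+1}\sfq^{hw}$ is an explicit combination of the series $E_{m_e}(\sfq^d)$ and hence quasi-modular for $\Gamma_1(n)$. With the loops removed in this elementary way and the propagator product reserved for the loopless graph, your argument coincides with the paper's proof.
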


	\subsubsection{Propagator functions.} The proof of Theorem \ref{theo-quasi-modularity-graph-sums} follows the steps as in \cite{goujard2019counting}. The idea is to relate the generating series to the period of a suitably chosen quasi-Jacobi form. To find this quasi-Jacobi form, we introduce \textit{propagators} defined using the Weierstrass function. The latter admits the following Fourier development in $\zeta=e^{2i\pi z}$ and $\sfq=e^{2i\pi\tau}$, valid for $1>|\zeta|>|\sfq|$:
	$$\wp(z;\tau)=\frac{1}{12}+\frac{\zeta}{(1-\zeta)^2}+\sum_{h,w=1}^\infty w(\zeta^w+\zeta^{-w}-2)\sfq^{hw}.$$
	\begin{itemize}[label=$\circ$]
	\item We have the propagator from \cite{goujard2019counting}, defined as $P(z;\tau)=\wp(z;\tau)+2G_2(\tau)$. Assuming that $1>|\zeta|>|\sfq|$, it admits the following Fourier development:
		$$P(z;\tau)=\sum_{h=0}^\infty\sum_{w=1}^\infty w\zeta^wq^{hw} + \sum_{h=1}^\infty\sum_{w=1}^\infty w\zeta^{-w}q^{hw},$$
		and the following Laurent development at $0$:
		$$P(z;\tau) = \frac{1}{(2i\pi z)^2}+\sum_{l=1}^\infty (2l-1)2l G_{2l}(\tau)(2i\pi z)^{2l}.$$
	\item We then define the shifted propagators: $P_k(z;\tau)=P(z+k\tau;n\tau)$.
	\end{itemize}
	
	\begin{lem}\label{lem-dvp-shifted-propagators}
	The shifted propagators admit the following Fourier development, which is valid for $|\sfq|^{-k}>|\zeta|>|\sfq|^{n-k}$:
		$$P_k(z;\tau)=\sum_{\substack{h\geqslant 0 \\ h\equiv k[n]}}\sum_{w=1}^\infty w\zeta^w\sfq^{hw} + \sum_{\substack{h\geqslant 1 \\ h\equiv -k[n]}}\sum_{w=1}^\infty w\zeta^{-w}\sfq^{hw}.$$
	\end{lem}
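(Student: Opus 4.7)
The plan is to derive the Fourier expansion by direct substitution into the expansion of $P(z;\tau)$ recalled just above in the excerpt. Setting $\zeta=e^{2i\pi z}$ and $\sfq=e^{2i\pi\tau}$ as usual, the definition $P_k(z;\tau)=P(z+k\tau;n\tau)$ corresponds to replacing the variable pair $(\zeta,\sfq)$ by $(\zeta\sfq^k,\sfq^n)$ in the formula for $P$. Before substituting into the series, I would first translate the convergence condition: the expansion of $P$ is valid for $1>|\zeta|>|\sfq|$, which becomes $1>|\zeta\sfq^k|>|\sfq^n|$ after substitution, and this is exactly $|\sfq|^{-k}>|\zeta|>|\sfq|^{n-k}$.

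Next I would carry out the substitution in each of the two series. The first piece becomes
\[
\sum_{h=0}^\infty\sum_{w=1}^\infty w\,(\zeta\sfq^k)^w\sfq^{nhw}=\sum_{h=0}^\infty\sum_{w=1}^\infty w\,\zeta^w\sfq^{(k+nh)w},
\]
and setting $H=k+nh$ the index $H$ ranges over non-negative integers with $H\equiv k\pmod n$. Choosing the representative $0\leqslant k<n$, the constraint $H\geqslant k$ is equivalent to $H\geqslant 0$, giving the first sum in the claimed formula. The second piece becomes $\sum_{h\geqslant 1,\,w\geqslant 1}w\,\zeta^{-w}\sfq^{(nh-k)w}$; setting $H=nh-k$, the index $H$ runs over positive integers congruent to $-k\pmod n$, yielding the second sum.

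The only delicate point is the boundary bookkeeping in the re-indexing, which I would verify case by case: for $k=0$ the smallest $H$ in the second sum is $n$ (corresponding to $h=1$), consistent with $H\geqslant 1$ and $H\equiv 0\pmod n$; for $0<k<n$ the smallest $H$ is $n-k$, consistent with $H\geqslant 1$ and $H\equiv -k\pmod n$. There is no genuine obstacle here beyond careful index management, since all manipulations take place in the annulus of absolute convergence and each step is a legitimate rearrangement of an absolutely convergent double series.
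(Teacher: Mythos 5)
Your proposal is correct and follows exactly the paper's own argument: substitute $(z+k\tau,n\tau)$ into the Fourier development of $P$, which replaces $(\zeta,\sfq)$ by $(\zeta\sfq^k,\sfq^n)$, translate the domain of validity, and re-index the two sums via $H=k+nh$ and $H=nh-k$. The extra boundary check you perform for $k=0$ versus $0<k<n$ is a welcome (if minor) addition to the paper's more terse re-indexing step.
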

	
	\begin{proof}
	We evaluate the Fourier development of the propagator $P$ at $(z+k\tau,n\tau)$. Since $\sfq=e^{2i\pi\tau}$ and $\zeta=e^{2i\pi z}$, this as the effect of replacing $\sfq$ by $\sfq^n$ and $\zeta$ by $\zeta\sfq^k$. The development is thus valid as long as $|\sfq|^n<|\zeta||\sfq|^k<1$. Thus, we get
	$$P_k(z;\tau)=\sum_{h=0}^\infty\sum_{w=1}^\infty w\zeta^w \sfq^{w(nh+k)}+\sum_{h=1}^\infty\sum_{w=1}^\infty w\zeta^{-w} \sfq^{w(nh-k)}.$$
	In the first sum, we set $h'=k+nh$, and in the second sum, $h'=nh-k$, yielding the result.
	\end{proof}

	\subsubsection{Loop contribution.} We start by deleting the loops from the graph $\Gamma$. Let $\Lfk$ be the set of loops in $\Gamma$ and let $\Gamma_0$ be the graph where we removed those loops. We still denote by $\mfk,\efk,\mu$ the data associated to $\Gamma_0$, forgetting about the loops.
	
	\begin{lem}\label{lem-factorization-loop-contribution}
	The generating series of $\Gamma$ factors as follows:
	$$S(\Gamma,\mfk,\efk,\bfmu)=S(\Gamma_0,\mfk,\efk,\bfmu)\prod_{e\in\Lfk}S_e,$$
	where $S_e$ is quasi-modular for $SL_2(\ZZ)$ is $e\notin\efk$, and for $\Gamma_1(n)$ if $e\in\efk$.
	\end{lem}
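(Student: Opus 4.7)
Here is my plan for the proof. The argument naturally splits into two parts: establishing the product factorization, and proving the quasi-modularity of each individual loop factor $S_e$.

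First, I will establish the factorization. The key observation is that a loop $e$ at a vertex $V$ contributes its weight $w_e$ equally to both the incoming and outgoing flux at $V$, so the balancing conditions $\delta(V')$ at every vertex are independent of $(w_e, h_e)$. Since loop data are also uncoupled from the weights, heights, and orientations of all other edges, the sum defining $S(\Gamma,\mfk,\efk,\bfmu)$ factorizes multiplicatively, yielding
\[
S_e = c_e \sum_{w \geq 1} w^{m_e+1} \sum_{\substack{h \geq 1 \\ h \in \Upsilon_e \text{ if } e \in \efk}} \sfq^{wh},
\]
with $c_e \in \{1,2\}$ a combinatorial factor accounting for the orientation sum (well-defined because $\Upsilon_e$ is $-1$-stable when $e \in \efk$, so the two orientations give equal contributions).

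Next, I will prove quasi-modularity. If $e \notin \efk$, the series is, up to an additive constant, the non-constant part of the Eisenstein series $E_{m_e+2}(\sfq) = \text{const} + \sum_{N \geq 1} \sigma_{m_e+1}(N)\sfq^N$, giving quasi-modularity for $SL_2(\ZZ)$ of weight $m_e+2$. If $e \in \efk$, I decompose $S_e$ by residue class, writing $S_e = c_e \sum_{r \in \Upsilon_e} T_r$ with $T_r(\tau) = \sum_{w \geq 1,\, h \geq 1,\, h \equiv r [n]} w^{m_e+1}\sfq^{wh}$, and I realise the relevant combinations of these as values (constant Laurent coefficients) at $z = 0$ of shifted propagators. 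Precisely, combining the Fourier expansion of Lemma \ref{lem-dvp-shifted-propagators} with the identity $D_z \zeta^{\pm w} = \pm w \zeta^{\pm w}$ where $D_z = \frac{1}{2\pi i}\partial_z$, I get
\[
D_z^{m_e} P_r(z;\tau)\big|_{z=0} \;=\; T_r + (-1)^{m_e}\,T_{-r} \;=\; T_r + T_{-r},
\]
using that $m_e$ is even. For $r \not\equiv 0 \bmod n$, the poles of $P_r$ sit at $z \equiv -r\tau \bmod 1, n\tau$ and hence avoid $z = 0$, so $D_z^{m_e} P_r \in \J^{(m_e+2)}(\Gamma_1(n))$ is holomorphic at the origin; Lemma \ref{lem-quasi-modularity-Laurent-coefficients} (applied at the pole $k=0$ and Laurent order $0$) then yields quasi-modularity for $\Gamma_1(n)$ of weight $m_e+2$. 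The $-1$-stability of $\Upsilon_e$ finally lets me rewrite $\sum_{r \in \Upsilon_e \setminus \{0\}} T_r = \tfrac{1}{2}\sum_r (T_r + T_{-r})$, transferring quasi-modularity to the combination that actually appears in $S_e$.

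The main obstacle I expect is the exceptional residue $r = 0 \in \Upsilon_e$: since $P_0(z;\tau) = P(z; n\tau)$ has a pole at $z = 0$, the Laurent-coefficient argument above breaks down. Here I will instead identify $T_0 = \sum_{w, h' \geq 1} w^{m_e+1}\sfq^{n w h'}$ directly as $E_{m_e+2}$ evaluated in the variable $\sfq^n$, and invoke the standard fact that the substitution $\tau \mapsto n\tau$ preserves quasi-modularity, producing a quasi-modular form for $\Gamma_0(n) \supset \Gamma_1(n)$.
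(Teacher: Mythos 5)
Your proposal is correct, and the factorization step (loops decouple from all balancing conditions, so their weight/height sums factor out) is the same as the paper's. Where you genuinely diverge is in proving quasi-modularity of the loop factor when $e\in\efk$. The paper stays entirely elementary: it observes that the $(\ZZ/n\ZZ)^\times$-orbits of $\ZZ/n\ZZ$ are the sets $\Upsilon_n(d)=\{k:\gcd(k,n)=d\}$, computes the corresponding series $F_n^d$ by an induction on $n$ (using $F_n^d(\sfq)=F_{n/d}^1(\sfq^d)$, $F_n^0(\sfq)=E_m(\sfq^n)$, and the fact that all orbits sum to $E_m(\sfq)$), and thereby writes each loop factor explicitly as a $\ZZ$-linear combination of $E_{m_e+2}(\sfq^d)$ for $d\mid n$. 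You instead realize the symmetric combination $T_r+T_{-r}$ (for $r\not\equiv 0$) as the value at $z=0$ of $D_z^{m_e}P_r$, where $P_r$ is the shifted propagator of Lemma \ref{lem-dvp-shifted-propagators}, and invoke Lemma \ref{lem-quasi-modularity-Laurent-coefficients}(ii) at the regular point $z=0$; the evenness of $m_e$ and the $(-1)$-stability of $\Upsilon_e$ then recover exactly the combination appearing in $S_e$, and the exceptional class $r=0$ is handled as $E_{m_e+2}(\sfq^n)$, just as in the paper. Both arguments are valid and yield weight $m_e+2$. The paper's route buys explicit closed formulas (used later in the genus $2$ and $3$ computations), while yours reuses machinery the paper builds anyway for the multivariable graph sums and, notably, proves quasi-modularity for any $(-1)$-stable congruence set $\{\pm k\}$ for arbitrary $n$ --- precisely the strengthening the paper flags as ``possible'' in the remark following the lemma but does not carry out because for $n=2,3,4,6$ such sets happen to be full orbits.
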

	
	\begin{proof}
	As loops are adjacent to a unique vertex, their weight does not impact the value of $\delta(V)$. Thus, weights and heights are chosen independently, and this choice does not depend on the orientation. Thus, we can factor $\sum_{h_e,w_e}w_e^{m_e+1}\sfq^{h_ew_e}$ out of $S(G,\mfk,\efk,\bfmu)$ for any orientation on $\Gamma$.
	\begin{itemize}[label=$\ast$]
	\item If $e\notin\efk$, then we are as in \cite{goujard2019counting}:
		$$\sum_{h,w=1}^\infty w^{m+1}\sfq^{hw} = E_m(\sfq),$$
		where the above is the definition of $E_m$, the Eisenstein series, shifted to get $0$ first coefficient, and are quasi-modular forms.
	
	\item If $e\in\efk$, we get
		$$\sum_{\substack{h=1 \\ h\in\Upsilon}}^\infty\sum_{w=1}^\infty w^{m_e+1}q^{hw}.$$
		where $\Upsilon=\Upsilon_e$ is the chosen congruence constraint. We show by induction on $n$ that for any $(\ZZ/n\ZZ)^\times$-orbit $\Upsilon$ of $\ZZ/n\ZZ$, the above series is a quasi-modular for $\Gamma_1(n)$, using $E_m(\sfq)$. The orbits of $\ZZ/n\ZZ$ are of the form $\Upsilon_n(d)=\{k\in\ZZ/n\ZZ |\mathrm{gcd}(k,n)=d\}$. We denote by $F_n^d(\sfq)$ the corresponding generating series.
			\begin{itemize}[label=$\circ$]
			\item If $n$ is a prime number, we have two possible classes $\Upsilon$: $\{0\}$ and $(\ZZ/n\ZZ)-\{0\}$, for which the series are respectively
			$$F_n^0(\sfq)=E_m(\sfq^n) \text{ and }F_n^1(\sfq)=E_m(\sfq)-E_m(\sfq^n).$$
			The first is quasi-modular for $\Gamma_1(n)$, and thus, so is the second.
			\item If $n$ is not prime, for $d\neq 0$, $h\in\Upsilon_n(d)$ if and only if $d|h$ and $\frac{h}{d}\in\Upsilon_{n/d}(1)$, so we have
			$$F_n^d(\sfq)=F_{n/d}^1(\sfq^d).$$
			In particular, the induction ensures the quasi-modularity for $d\neq 1$. For $d=0$, we have $F_n^0(\sfq)=E_m(\sfq^n)$, so we also have the quasi-modularity. As
			$$F_n^0(\sfq)+\sum_{d|n}F_n^d(\sfq)=E_m(\sfq),$$
			we also conclude to the quasi-modularity of $F_n^1(\sfq)$.
			\end{itemize}
	\end{itemize}
	\end{proof}
		
	\begin{expl}
	We have the following expressions:
		\begin{itemize}[label=$\circ$]
		\item $\sum_{h\equiv 0\modulo 2}w^{m+1}\sfq^{hw} = E_m(\sfq^2) ,$
		\item $\sum_{h\equiv 1\modulo 2}w^{m+1}\sfq^{hw} = E_m(\sfq)- E_m(\sfq^2) ,$
		\item $\sum_{h\equiv 0\modulo 3}w^{m+1}\sfq^{hw} = E_m(\sfq^3) ,$
		\item $\sum_{h\equiv 1,2\modulo 3}w^{m+1}\sfq^{hw} = E_m(\sfq)-E_m(\sfq^3) ,$
		\item $\sum_{h\equiv 2\modulo 4}w^{m+1}\sfq^{hw} = E_m(\sfq^2)-E_m(\sfq^4) .$
		\end{itemize}
	\end{expl}
	
	\begin{rem}
	It may be possible to enhance Lemma \ref{lem-factorization-loop-contribution} by proving the quasi-modularity for different type of congruence constraints. For instance, $\Upsilon_e$ being of the form $\{\pm k\}$. However, we are only interested in the values $n=2,3,4,6$, for which sets of this form are actually orbits, where the expression can be made explicit.
	\end{rem}
	
	\subsubsection{Contour integrals and graph propagator.} We now assume that the graph is without loop. Let $\Gamma$ be a loopless graph, with vertices indexed by $[\![1;N]\!]$. It is endowed with a natural orientation, where for any edge $e$, the labels of the extremities are increasing. For an edge $e$, let $e_-$ and $e_+$ be the labels of its extremities. We associate the following propagator function:
	$$P_{\Gamma,\mfk,\efk,\bfmu}(z;\tau) = \prod_{e\notin\efk}P^{(m_e)}(z_{e_-}-z_{e_+};\tau)\prod_{e\in\efk}P_{\mu_e}^{(m_e)}(z_{e_-}-z_{e_+};\tau).$$
	In other terms, we use the shifted propagator if $e\in\efk$, and the usual propagator if not. The variables are indexed by the labels of the vertices, \textit{i.e.} by $[\![1;N]\!]$.
	
	\begin{prop}\label{prop-generating-series-contour-integral-as-contour-integral}
	The refined generating series for $\Gamma$ is the constant Fourier coefficient for $P_{\Gamma,\mfk,\efk,\bfmu}$ on the domain $1>\frac{|\zeta_i|}{|\zeta_j|}>|\sfq|$ for any $i<j$:
	$$[P_{\Gamma,\mfk,\efk,\bfmu}]_\Sigma=S(\Gamma,\mfk,\efk,\bfmu).$$
	\end{prop}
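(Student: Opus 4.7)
The strategy is to replace each propagator by its Fourier series in the variables $\zeta_i = e^{2i\pi z_i}$ and identify the constant Fourier coefficient, using the fact that the torus-fiber integration in the definition of $[\cdot]_\Sigma$ is nothing but the extraction of the coefficient of $\zeta_1^0\cdots\zeta_N^0$ in the Fourier expansion valid on the annular region specified by $\Sigma$. Concretely, for each edge $e \notin \efk$ I would use the Fourier expansion of $P(z_{e_-}-z_{e_+};\tau)$ recalled in the text, and for each edge $e \in \efk$ I would use the expansion of $P_{\mu_e}(z_{e_-}-z_{e_+};\tau)$ provided by Lemma \ref{lem-dvp-shifted-propagators}. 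The chosen region $1>|\zeta_i|/|\zeta_j|>|\sfq|$ for $i<j$ (with $0 \leq \mu_e \leq n-1$) lies in the common domain of convergence of all these expansions, which is the first sanity check I would carry out.

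The next step is to multiply out the expansions over all edges and organise the result. Each edge contributes two families of monomials: a ``positive'' family $w_e\zeta_{e_-}^{w_e}\zeta_{e_+}^{-w_e}\sfq^{h_e w_e}$ (with $h_e \geq 0$) and a ``negative'' family $w_e\zeta_{e_-}^{-w_e}\zeta_{e_+}^{w_e}\sfq^{h_e w_e}$ (with $h_e \geq 1$). Choosing one or the other per edge is exactly the same data as orienting $e$ either as $e_- \to e_+$ or as $e_+\to e_-$; summing over the $2^{|E|}$ sign choices produces the sum over orientations $G\in\Gamma$. For an edge in $\efk$, Lemma \ref{lem-dvp-shifted-propagators} constrains the positive-family heights by $h_e\equiv\mu_e\modulo n$ and the negative-family heights by $h_e\equiv -\mu_e\modulo n$, matching precisely the convention $\mu_{\overline{e}}=-\mu_e$ used to define $S(G,\mfk,\efk,\bfmu)$. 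The $m_e$ derivatives built into $P^{(m_e)}$ and $P^{(m_e)}_{\mu_e}$ turn the Fourier coefficient $w_e$ into $w_e^{m_e+1}$, producing the edge weights present in the definition of the generating series.

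Once multiplied out, the product of Fourier series is a combination of monomials $\prod_i \zeta_i^{d_i}$ weighted by $\prod_e w_e^{m_e+1}\sfq^{h_e w_e}$, where $d_i$ is the signed flux at the vertex $V_i$ determined by the chosen orientation. The torus-fiber integral selects exactly the terms with $d_i=0$ for all $i$, which is the divergence condition $\delta(V_i)=1$ at every vertex. Collecting what remains gives
\[
[P_{\Gamma,\mfk,\efk,\bfmu}]_\Sigma \;=\; \sum_{G\in\Gamma}\sum_{\bfw,\bfh} \prod_e w_e^{m_e+1}\sfq^{h_e w_e}\prod_V\delta(V) \;=\; S(\Gamma,\mfk,\efk,\bfmu),
\]
as claimed.

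The manipulations are conceptually straightforward and mirror the classical case of \cite{goujard2019counting}; the only genuine care points are (i) verifying that the component $\Sigma$ specified by $1>|\zeta_i/\zeta_j|>|\sfq|$ actually sits inside the common domain of convergence of all shifted expansions when $e\in\efk$, which is the reason for the hypothesis $0\leq \mu_e\leq n-1$ implicit in the indexing by $\ZZ/n\ZZ$, and (ii) bookkeeping the normalization of derivatives so that $P^{(m_e)}$ contributes exactly $w_e^{m_e+1}$ (this is the motivation behind using the $\frac{1}{2i\pi}\partial_z$-normalization indicated in the paper when defining the action of derivatives on the propagators). Neither presents a serious difficulty, and the bulk of the argument reduces to the careful identification of orientations with sign choices in the Fourier expansion.
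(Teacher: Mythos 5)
Your proposal is correct and follows essentially the same route as the paper's proof: expand each (shifted) propagator in its Fourier series on the given domain, identify the two sign families with the two orientations of each edge, note that the congruence $h_e\equiv\pm\mu_e\modulo n$ from Lemma \ref{lem-dvp-shifted-propagators} matches the convention $\mu_{\overline{e}}=-\mu_e$, and observe that the torus-fiber integral kills all monomials except those satisfying the balancing condition $\delta(V)=1$ at every vertex. The two care points you flag (convergence of the shifted expansions on the component $\Sigma$, and the derivative normalization producing $w_e^{m_e+1}$) are exactly the ones the paper addresses.
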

	
	\begin{proof}
	On the given domain, the equations $|\sfq|^{-k}>\frac{|\zeta_i|}{|\zeta_j|}>|\sfq|^{n-k}$ are always satisfied, so that we are allowed to use the Fourier developments of the propagators. We thus expand the product to get a series with coefficients being Laurent monomials in the variables $\zeta_i$. Each propagator expresses as a sum of two series, the one where $\zeta$ has positive exponents, and the one where they are negative. Choosing one of them amounts to choose an orientation of the corresponding edge in $\Gamma$, which we reflect by the choice of $\epsilon_e=\pm 1$. A choice of $1$ means the orientation is with increasing labels, and decreasing labels if $-1$. We expand the graph propagator, yielding
	$$P_{\Gamma,\mfk,\efk,\bfmu} = \sum_{\epsilon_e=\pm 1}\sum_{\bfw,\bfh} \prod_e w_e^{m_e+1}\sfq^{h_ew_e} \left(\frac{\zeta_{e_+}}{\zeta_{e_-}}\right)^{\epsilon_e w_e},$$
	where the sum over $\epsilon_e=\pm 1$ means over all the assignments of a $\pm 1$ to each edge, and for each $e\in\efk$, we have $h_e\equiv\epsilon_e\mu_e$, $w_e$ going from $1$ to $\infty$.	The monomial $\prod_e\left(\frac{\zeta_{e_+}}{\zeta_{e_-}}\right)^{\epsilon_e w_e}$ can be rewritten $\prod_1^N \zeta_i^{\delta(i)}$, where
	$$\delta(i)=\sum_{e:e_+=i}\epsilon_e w_e-\sum_{e:e_-=i}\epsilon_e w_e,$$
	is the difference between the sum of ingoing and outgoing weights. Let $\gamma(t)=t+iy$ with $0\leqslant t\leqslant 1$ and $0<y<\Im\tau$, and $\widetilde{\gamma}=e^{2i\pi\gamma}$. We have
	$$\int_\gamma \zeta^w\dd z = \int_{\widetilde{\gamma}}\zeta^{w-1}\frac{\dd\zeta}{2i\pi}=\delta_{0,w}.$$
	Thus, the integral of a monomial is $1$ precisely when the choices of weights make the graphs balanced. Therefore, we have
	$$\int P_{\Gamma,\mfk,\efk,\bfmu} = \sum_{G\in\Gamma}\sum_{\bfw,\bfh}\prod_e w_e^{m_e+1}\sfq^{h_ew_e}\prod_V\delta(V),$$
	which is the required generating series.
	\end{proof}

	\begin{proof}[Proof of Theorem \ref{theo-quasi-modularity-graph-sums}]
	We have shown in Lemma \ref{lem-factorization-loop-contribution} that the generating series of any graph can be factored between the loop contribution, which is quasi-modular, and the generating series for the associated loopless graph. Furthermore, Proposition \ref{prop-generating-series-contour-integral-as-contour-integral} expresses the generating function as the constant Fourier coefficient of the propagator associated to the loopless graph. Theorem \ref{theo-quasi-modularity-periods-multivariable} states that the latter is quasi-modular since the propagator is a quasi-Jacobi form.
	\end{proof}

	\subsection{Generating series of Gromov-Witten invariants and explicit computations.}
	\label{sec-generating-series-GW-invariants}

	\subsubsection{General Statement.} We now apply Theorem \ref{theo-quasi-modularity-graph-sums} to get the quasi-modularity of generating series of Gromov-Witten invariants, proving Theorem \ref{theo-quasi-modularity-GW-invariants}.
	
	\begin{proof}[Proof of Theorem \ref{theo-quasi-modularity-GW-invariants}]
	For each given class $a\sfe+b\sff$, the genus $g$ GW-invariant decomposes as a sum over the genus $g$ pearl diagrams of degree $a\sfe+b\sfs$. Forgetting about the orientation, the homology classes $\varpi_V$ of the vertices, weights $w_e$ and heights $h_e$ of the edges, pearl diagrams are merely genus $g$ graphs with vertices labeled from $1$ to $N$ (the number of points constraints), of which there are a finite number.
	
	Conversely, given such graph $\Gamma$ we can get back a pearl diagram by choosing decorations: vertex homology classes given by $\mathbf{a}=(a_V)$, orientation $G\in\Gamma$, edge heights $\bfh=(h_e)$, and edge weights $\bfw=(w_e)$ such that we have a balanced graph. We denote by $\PPP_\Gamma(G,\mathbf{a},\bfh,\bfw)$ the obtained pearl diagram. We can consider the generating series
	$$S_\Gamma=\sum_{G\in\Gamma}\sum_{\mathbf{a}}\sum_{\bfh,\bfw} m\left(\PPP_\Gamma(G,\mathbf{a},\bfh,\bfw)\right)\prod_V\sfp^{a_V}\prod_e\sfq^{h_ew_e},$$
	so that the generating series of GW-invariants $F_g^S(\sfp,\sfq)$ is actually $\sum_\Gamma S_\Gamma$. To conclude, we use the expression for the multiplicity:
	$$m\left(\PPP_\Gamma(G,\mathbf{a},\bfh,\bfw)\right) = \prod_V a_V^{n_V-1}\sigma_1(a_V)\prod_k\tau(\nu_k)\prod_{E_\mathrm{flat}}w_e\prod_{E\backslash E_\mathrm{flat}}w_e^3.$$
	\begin{itemize}[label=$\circ$]
	\item First, we can factor the sum over $\mathbf{a}$, which gives
	$$A_\Gamma(\sfp)=\prod_V\left(\sum_{a_V=1}^\infty a_V^{n_V-1}\sigma_1(a_V)\sfp^{a_V}\right) = \prod_V D^{n_V-1}E_2(\sfp),$$
	where the product is over the non-flat vertices.
	\item Concerning the sum over $G\in\Gamma$, $\bfh$ and $\bfw$, it is handled by Theorem \ref{theo-quasi-modularity-graph-sums}. We choose $m_e=0$ for any edge $e$ in $E_\mathrm{flat}$, and $m_e=2$ else. The value of $\tau(\nu_k)$ only depends on the chosen orientation and heights. Moreover, the function $\tau$ takes the same values in $\nu$ and $-\nu$, which are precisely orbits modulo $(\ZZ/n\ZZ)^\times$ for $n=2,3,4,6$. Let $\gamma$ be one of the cycles involved.
		\begin{itemize}[label=$\ast$]
		\item If the cycle consists in a unique edge, necessarily a loop, $\nu$ is equal to the height of the unique edge. The function $\tau$ can thus be written as a linear combination of $\mathds{1}_\Upsilon(h_e)$, where $h_e$ is the height of the unique edge, and $\Upsilon$ goes over the $(\ZZ/n\ZZ)^\times$-orbits of $\ZZ/n\ZZ$, all of the form $\{\pm k\}$ for $n=2,3,4,6$.
		\item If the cycle $\gamma$ contains more than one edge. Choose an orientation $G\in\Gamma$, encoded by $(\epsilon_e)_e$. For $e\in\gamma$, let $\eta_e=\pm 1$ such that the cycle $\gamma$ is written
		$$\gamma=\sum_{e\in\gamma}\eta_e e.$$
		The class $\nu$ realized by the cycle $\gamma$ is
		$$\sum_{e\in\gamma} \eta_e \epsilon_eh_e\in\ZZ.$$
		Indeed, the edge $e$ with orientation given by $G$ realizes the class $h_e$, and the orientation in the cycle $\gamma$ differs by $\eta_e\epsilon_e$. The function $\tau_n(\nu)$ can be expressed as a linear combination of $\mathds{1}_{k+n\ZZ}(\nu)$, for $k\in\ZZ/n\ZZ$. Furthermore, for each $k$, there is only a finite number of $k_e\in\ZZ/n\ZZ$ such that
		$$\sum_e \eta_e k_e \equiv k\modulo n.$$
		Thus, we have
		$$\mathds{1}_{k+n\ZZ}\left(\sum \eta_e\epsilon_e h_e\right) = \sum_{(k_e):\sum\eta_ek_e\equiv k}\prod_e \mathds{1}_{k_e+n\ZZ}(\epsilon_eh_e) = \sum_{(k_e):\sum\eta_ek_e\equiv k}\prod_e \mathds{1}_{\epsilon_ek_e+n\ZZ}(h_e).$$
		Expanding, we get that the the generating series is a linear combination of generating with congruence constraints, as dealt with in Section \ref{sec-generating-series-graphs-sums}, allowing us to conclude that $B_\Gamma(\sfq)$ is quasi-modular for $\Gamma_1(n)$ by Theorem \ref{theo-quasi-modularity-graph-sums}.
		\end{itemize}
	\end{itemize}
	\end{proof}

	\subsubsection{Explicit computations.} To illustrate the regularity result, we now explicitly compute some of the generating series of GW-invariants for fixed genus. We set $E_{2k}(\sfp)=\sum_{w,h=1}^\infty w^{2k-1}\sfp^{wh}$ to be the shifted Eisenstein series with $0$ constant term.

	\begin{figure}
	\begin{center}
	\begin{tikzpicture}[line cap=round,line join=round,x=0.75cm,y=0.75cm]
	\pearl (1) at (180:2) label=1;
	\draw (1) to[out=-60,in=-120] (-60:1) to[out=60,in=-60] (60:1) to[out=120,in=60] (1) ;
	\end{tikzpicture}
	\caption{\label{fig-genus-2-pearl-diagram}The unique genus $2$ pearl diagram up to choice of weight/height.}
	\end{center}
	\end{figure}
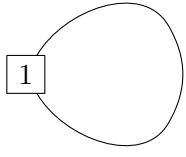
	
	\begin{expl}
	We start with the genus $2$ invariant. There is a unique pearl diagram of genus $2$, depicted on Figure \ref{fig-genus-2-pearl-diagram}, and it consists in a unique non-flat vertex, and a unique loop. The generating series over $a$ is
	$$A_\Gamma(\sfp)=\sum_{a=1}^\infty a\sigma_1(a)\sfp^a = DE_2(\sfp),$$
	and the generating series over $b$ depends on the choice of the weight and height for the unique edge. It consists exactly in the loop contribution:
	$$\sum_{w,h=1}^\infty \tau_n(h)w^3\sfq^{wh}.$$
	As the function $\tau_n(h)$ takes constant values for $h$ in the same $(\ZZ/n\ZZ)^\times$-orbit of $\ZZ/n\ZZ$, we get to express it as a sum of $E_4(\sfq^\bullet)$. More precisely, we get the following values for $B_\Gamma(\sfq)$ depending on the type of the bielliptic surface,
		\begin{enumerate}[label=$(\alph*)$]
		\item $4E_4(\sfq)-4E_4(\sfq^2)$,
		\item $3E_4(\sfq)-3E_4(\sfq^3)$,
		\item $2E_4(\sfq)+2E_4(\sfq^2)-4E_4(\sfq^4)$,
		\item $E_4(\sfq)+2E_4(\sfq^2)+3E_4(\sfq^3)-6E_4(\sfq^6)$.
		\end{enumerate}
	\end{expl}

	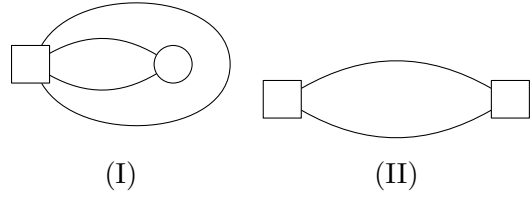
\begin{figure}
	\begin{center}
	\begin{tabular}{cc}
	\begin{tikzpicture}[line cap=round,line join=round,x=0.75cm,y=0.75cm]
	\pearl (1) at (180:2) label=;
	\flatpearl (2) at (0:0.5) label=;
	\draw (1) to[bend right] (2) ;
	\draw (2) to[bend right] (1) ;
	\draw (1) to[in=90,out=60] (0:1.5) to[out=-90,in=-60] (1) ;
	\end{tikzpicture} & \begin{tikzpicture}[line cap=round,line join=round,x=0.75cm,y=0.75cm]
	\pearl (1) at (180:2) label=;
	\pearl (2) at (0:2) label=;
	\draw (1) to[bend right] (2) ;
	\draw (2) to[bend right] (1) ;
	\end{tikzpicture} \\
	(I) & (II) \\
	\end{tabular}
	\caption{\label{fig-genus-3-pearl-diagrams} The two genus $3$ pearl diagrams up to labeling, orientation and choice of weight/height.}
	\end{center}
	\end{figure}

	\begin{expl}
	We now compute the generating series for genus $3$. This time we get two kinds of pearl diagrams, depicted on Figure \ref{fig-genus-3-pearl-diagrams}: the one with a unique non-flat vertex, a loop and a pair of edge connected to a flat vertex, and the one with two non-flat vertices.
		\begin{itemize}[label=$\circ$]
		\item For the first kind of diagram (type I), we have two labelings and the two orientations are the same up to symmetry. The generating series for $a$ is
		$$A_{\Gamma_I}(\sfp)=\sum_{a=1}^\infty a^3\sigma_1(a)\sfp^a = D^3E_2(\sfp).$$
		For the generating series in $b$, the loop contribution is the same as in the genus $2$ case, and for the pair of parallel edges linking both vertices, we get
		$$\sum_{\substack{w\geqslant 1 \\ h_1\geqslant 0,h_2\geqslant 1}}w^2\sfq^{w(h_1+h_2)}=\sum_{w,h=1}^\infty w^2h\sfq^{wh} = DE_2(\sfq).$$
		Thus, $B_{\Gamma_I}(\sfq)$ has the following values:
			\begin{enumerate}[label=$(\alph*)$]
		\item $DE_2(\sfq)\left[4E_4(\sfq)-4E_4(\sfq^2)\right]$,
		\item $DE_2(\sfq)\left[3E_4(\sfq)-3E_4(\sfq^3)\right]$,
		\item $DE_2(\sfq)\left[2E_4(\sfq)+2E_4(\sfq^2)-4E_4(\sfq^4)\right]$,
		\item $DE_2(\sfq)\left[E_4(\sfq)+2E_4(\sfq^2)+3E_4(\sfq^3)-6E_4(\sfq^6)\right]$.
		\end{enumerate}
		
		\item For pearl diagrams of the second kind (type II), the $a$-series is obtained by assigning numbers $a_1$, $a_2$ to the two vertices:
		$$A_{\Gamma_{II}}(\sfp)=\sum_{a_1,a_2} a_1\sigma_1(a_1)a_2\sigma_1(a_2) \sfp^{a_1+a_2} = DE_2(\sfp)^2.$$
		For the $b$-series, we have to choose the heights of each edge:
		$$\sum_{\substack{h_1\geqslant 1,h_2\geqslant 0 \\ w\geqslant 1 }} w^6\tau(h_1+h_2)\sfq^{w(h_1+h_2)} = \sum_{h,w\geqslant 1}w^5(hw)\tau(h)\sfq^{hw}.$$
		If we did not have the $\tau(h_1+h_2)$, we would get $\sum_{h,w\geqslant 1}w^5(hw)\sfq^{wh}=DE_6(\sfq)$. With the term, we get as in the genus $2$ case evaluations at some $\sfq^\bullet$, yielding the following values for $B_{\Gamma_{II}}(\sfq)$:
		\begin{enumerate}[label=$(\alph*)$]
		\item $4DE_6(\sfq)-8DE_6(\sfq^2)$,
		\item $3DE_6(\sfq)-9DE_6(\sfq^3)$,
		\item $2DE_6(\sfq)+4DE_6(\sfq^2)-16DE_6(\sfq^4)$,
		\item $DE_6(\sfq)+4DE_6(\sfq^2)+9DE_6(\sfq^3)-36DE_6(\sfq^6)$.
		\end{enumerate}
		
		\end{itemize}
		In total, for the surfaces of type $(a)$, we get
	$$F_3^S(\sfp,\sfq) = 4D^3E_2(\sfp)DE_2(\sfp)\left[E_4(\sfq)-E_4(\sfq^2)\right] + 4DE_2(\sfp)^2\left[DE_6(\sfq)-2DE_6(\sfq^2)\right].$$
	\end{expl}

	\begin{figure}
	\begin{center}
	\begin{tabular}{cccc}
	\begin{tikzpicture}[line cap=round,line join=round,x=0.75cm,y=0.75cm]
	\pearl (1) at (180:1.5) label=;
	\pearl (2) at (60:1.5) label=;
	\pearl (3) at (-60:1.5) label=;
	\draw (1) to[bend right] (3) ;
	\draw (3) to[bend right] (2) ;
	\draw (2) to[bend right] (1) ;
	\end{tikzpicture} & \begin{tikzpicture}[line cap=round,line join=round,x=0.75cm,y=0.75cm]
	\pearl (1) at (180:1) label=;
	\pearl (2) at (0:1) label=;
	\flatpearl (F) at (0:0) label=;
	\draw (1) to[in=90,out=120] (180:2) to[out=-90,in=-120] (1) ;
	\draw (2) to[in=90,out=60] (0:2) to[out=-90,in=-60] (2) ;
	\draw (1) to (F) to (2) ;
	\end{tikzpicture} & \begin{tikzpicture}[line cap=round,line join=round,x=0.75cm,y=0.75cm]
	\pearl (1) at (180:1) label=;
	\pearl (2) at (0:1) label=;
	\flatpearl (F) at (0:2.5) label=;
	\draw (1) to[in=90,out=120] (180:2) to[out=-90,in=-120] (1) ;
	\draw (2) to[bend right] (F) ;
	\draw (F) to[bend right] (2) ;
	\draw (1) to (2) ;
	\end{tikzpicture} & \begin{tikzpicture}[line cap=round,line join=round,x=0.75cm,y=0.75cm]
	\pearl (1) at (180:1) label=;
	\flatpearl (F1) at (0:2) label=;
	\flatpearl (F2) at (0:1) label=;
	\draw (1) to[in=90,out=60] (0:3) to[out=-90,in=-60] (1) ;
	\draw (1) to[bend right,out=-50] (F1) ;
	\draw (F1) to[bend right,in=230] (1) ;
	\draw (1) to[bend right] (F2) ;
	\draw (F2) to[bend right] (1) ;
	\end{tikzpicture} \\
	& \begin{tikzpicture}[line cap=round,line join=round,x=0.75cm,y=0.75cm]
	\pearl (1) at (180:1) label=;
	\pearl (2) at (0:1) label=;
	\flatpearl (F) at (90:0.5) label=;
	\draw (2) to[in=90,out=60] (0:2) to[out=-90,in=-60] (2) ;
	\draw (1) to[bend right] (2) ;
	\draw (2) to[bend right] (F) ;
	\draw (F) to[bend right] (1) ;
	\end{tikzpicture} & \begin{tikzpicture}[line cap=round,line join=round,x=0.75cm,y=0.75cm]
	\pearl (1) at (180:1) label=;
	\pearl (2) at (0:1) label=;
	\flatpearl (F) at (0:2.5) label=;
	\draw (2) to[bend right] (F) ;
	\draw (F) to[bend right] (2) ;
	\draw (1) to[bend right] (2) ;
	\draw (2) to[bend right] (1) ;
	\end{tikzpicture} & \begin{tikzpicture}[line cap=round,line join=round,x=0.75cm,y=0.75cm]
	\pearl (1) at (180:2) label=;
	\pearl (2) at (0:2) label=;
	\flatpearl (F) at (0:0) label=;
	\draw (1) to[bend right] (2) ;
	\draw (2) to[bend right] (1) ;
	\draw (1) to (F) to (2) ;
	\end{tikzpicture} \\
	\end{tabular}
	\caption{\label{fig-genus-4-pearl-diagrams}The different genus $4$ pearl diagrams up to labeling, orientation and choice of weight/height.}
	\end{center}
	\end{figure}
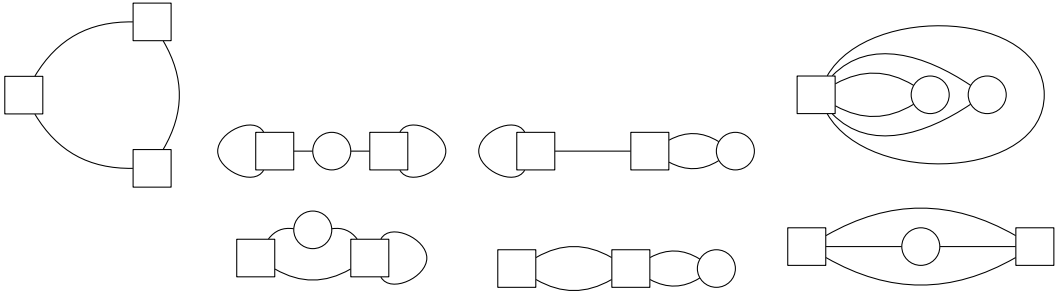
	
	\begin{expl}
	We do not make the extensive computation but provide the necessary diagrams involved in the computation of the generating series for genus $4$.
	\end{expl}

	\begin{expl}
	We compute the refined generating series of refined invariants for genus $2$. There is a unique diagram up to the labeling of of the edge by height and weight. Furthermore, when inserting the refined vertex multiplicity, we need to choose $k|a_V$. Setting $a_V=kl$, we sum over the choices $k,l,h,w$ and get the following generating series:
	$$-\sum_{k,l,h,w=1}^\infty \tau(h) (q^{wk/2}-q^{-wk/2})^2lw\sfp^{kl}\sfq^{wh}.$$
	Stress that the variables $q$ (variable of the refined invariants) and $\sfq$ (variable of the generating series) are not the same. To remove ambiguity, let us set $q=e^{iu}$ and expand to get the generating series of invariants with a $\lambda$-class:
	\begin{align*}
	(q^{wk/2}-q^{-wk/2})^2 = & -4\sin^2\left(\frac{kw}{2}u\right) \\
	= & -2(1-\cos(kwu) ) \\
	= & -2\sum_{g=2}^\infty \frac{(-1)^g}{(2g-2)!}(kwu)^{2g-2}.\\
	\end{align*}
	Thus, we get that the term in front of $u^{2g-2}$ is
	\begin{align*}
	 & 2\frac{(-1)^g}{(2g-2)!}\sum_{k,l,w,h=1}^\infty \tau(h)lk^{2g-2}w^{2g-1} \sfp^{kl}\sfq^{wh} \\
	= & 2DE_{2g-1}(\sfp)\sum_{h,w=1}^\infty \tau(h)w^{2g-1}\sfq^{wh}. \\
	\end{align*}
	For instance, in the case of bielliptic surfaces of type $(a)$, we get
	$$\sum_{a,b\geqslant 1}\gen{\lambda_{g-2};\pt}_{g,a\sfe+b\sff}^S\sfp^a\sfq^b =  8\frac{(-1)^g}{(2g-2)!}DE_{2g-2}(\sfp)(E_{2g}(\sfq)-E_{2g}(\sfq^2) ).$$
	\end{expl}

\bibliographystyle{plain}
\bibliography{biblio}

\end{document}